\documentclass[a4paper,reqno,10pt]{amsart}

\usepackage{a4wide}

\usepackage{amssymb}
\usepackage{amstext}
\usepackage{amsmath}
\usepackage{amscd}
\usepackage{amsthm}
\usepackage{amsfonts}

\usepackage{enumitem}
\setenumerate[1]{label={\upshape(\arabic*)}}
\setenumerate[2]{label={\upshape(\alph*)}}

\usepackage{tikz}
\usetikzlibrary{cd,arrows,decorations.pathmorphing,decorations.pathreplacing}


\newtheorem{theorem}{Theorem}[section]
\newtheorem{theoremi}{Theorem}

\newtheorem{corollary}[theorem]{Corollary}
\newtheorem{lemma}[theorem]{Lemma}
\newtheorem{proposition}[theorem]{Proposition}
\newtheorem{definition-proposition}[theorem]{Definition-Proposition}

\newtheorem{question}[theorem]{Question}

\theoremstyle{definition}
\newtheorem{definition}[theorem]{Definition}

\newtheorem{remark}[theorem]{Remark}
\newtheorem{example}[theorem]{Example}

\renewcommand{\AA}{\mathcal{A}}

\newcommand{\CC}{\mathcal{C}}

\newcommand{\DD}{\mathcal{D}}

\newcommand{\KKK}{\mathsf{K}}

\newcommand{\PP}{\mathcal{P}}

\newcommand{\JJ}{\mathcal{J}}

\newcommand{\Z}{\mathbb{Z}}

\newcommand{\Q}{\mathbb{Q}}

\newcommand{\length}{\operatorname{length}\nolimits}

\newcommand{\rad}{\operatorname{rad}\nolimits}

\newcommand{\Ext}{\operatorname{Ext}\nolimits}

\newcommand{\gl}{\operatorname{gl.\!dim}\nolimits}
\newcommand{\height}{\operatorname{ht}\nolimits}

\newcommand{\op}{\operatorname{op}\nolimits}
\newcommand{\RHom}{\mathbf{R}\strut\kern-.2em\operatorname{Hom}\nolimits}

\newcommand{\Kernel}{\operatorname{Ker}\nolimits}
\newcommand{\Cokernel}{\operatorname{Coker}\nolimits}
\newcommand{\Spec}{\operatorname{Spec}\nolimits}

\newcommand{\supp}{\operatorname{supp}\nolimits}

\newcommand{\Ab}{\mathcal{A}b}
\newcommand{\coker}{\Cokernel}

\renewcommand{\ker}{\Kernel}
\newcommand{\un}{\underline}

\DeclareMathOperator{\moduleCategory}{\mathsf{mod}} \renewcommand{\mod}{\moduleCategory}
\DeclareMathOperator{\Mod}{\mathsf{Mod}}

\DeclareMathOperator{\ind}{\mathsf{ind}}

\DeclareMathOperator{\Ex}{\mathsf{Ex}}
\DeclareMathOperator{\AR}{\mathsf{AR}}

\DeclareMathOperator{\CM}{\mathsf{CM}}
\DeclareMathOperator{\GP}{\mathsf{GP}}

\DeclareMathOperator{\add}{\mathsf{add}}

\DeclareMathOperator{\id}{\mathsf{id}}

\DeclareMathOperator{\eff}{\mathsf{eff}}

\newcommand{\iso}{\cong}
\newcommand{\infl}{\rightarrowtail}
\newcommand{\defl}{\twoheadrightarrow}

\newcommand{\equi}{\simeq}

\newenvironment{sbmatrix}{\left[\begin{smallmatrix}}{\end{smallmatrix}\right]}

\renewcommand{\AA}{\mathcal{A}}

\newcommand{\EE}{\mathcal{E}}

\numberwithin{equation}{section}

\begin{document}
\title[Relations for Grothendieck groups and representation-finiteness]{Relations for Grothendieck groups and representation-finiteness}

\author[H. Enomoto]{Haruhisa Enomoto}

\address{Graduate School of Mathematics, Nagoya University, Chikusa-ku, Nagoya. 464-8602, Japan}
\email{m16009t@math.nagoya-u.ac.jp}
\subjclass[2010]{16E20, 18E10, 16G70, 16G30}
\keywords{Grothendieck group; exact category; Auslander-Reiten sequence}
\begin{abstract}
 For an exact category $\mathcal{E}$, we study the Butler's condition ``AR=Ex": the relation of the Grothendieck group of $\mathcal{E}$ is generated by Auslander-Reiten conflations. Under some assumptions, we show that AR=Ex is equivalent to that $\mathcal{E}$ has finitely many indecomposables. This can be applied to functorially finite torsion(free) classes and contravariantly finite resolving subcategories of the module category of an artin algebra, and the category of Cohen-Macaulay modules over an order which is Gorenstein or has finite global dimension. Also we showed that under some weaker assumption, AR=Ex implies that the category of syzygies in $\mathcal{E}$ has finitely many indecomposables.
\end{abstract}

\maketitle

\tableofcontents

\section{Introduction}
Let $\Lambda$ be a finite-dimensional $k$-algebra over a field $k$. To the abelian category $\mod\Lambda$ of finitely generated $\Lambda$-modules, we can associate an abelian group $\KKK_0(\mod\Lambda)$ called the \emph{Grothendieck group}. This is the quotient group $\KKK_0(\mod\Lambda,0) / \Ex (\mod\Lambda)$, where $\KKK_0(\mod\Lambda,0)$ denotes the free abelian group with the basis the set of isomorphism classes $[X]$ of indecomposable objects $X \in \mod\Lambda$, and $\Ex(\mod\Lambda)$  the subgroup generated by $[X]-[Y]+[Z]$ for every exact sequence $0 \to X \to Y \to Z\to 0$.

\emph{Auslander-Reiten sequences}, or \emph{AR sequences} for short, are special kind of short exact sequences in $\mod\Lambda$, which are ``minimal'' in some sense. We denote by $\AR(\mod\Lambda)$ the subgroup of $\Ex(\mod\Lambda)$ generated by AR sequences. Then Butler and Auslander proved the following result.
\begin{theorem}[\cite{but,relations}]
Let $\Lambda$ be a finite-dimensional algebra. Then $\AR(\mod\Lambda) = \Ex(\mod\Lambda)$ holds if and only if there are only finitely many indecomposable objects in $\mod\Lambda$ up to isomorphism.
\end{theorem}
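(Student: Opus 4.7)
My plan is to handle the two implications separately, since they have very different character.

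For the ``if'' direction, suppose the indecomposables are exactly $M_1, \ldots, M_n$, so that $\KKK_0(\mod\Lambda, 0) = \Z^n$. For each non-projective indecomposable $M$ there is an AR-sequence $0 \to \tau M \to E \to M \to 0$, contributing one relation; I would order the non-projective indecomposables by an AR-quiver height function so that $\tau M$ precedes $M$. In this order the matrix of AR-relations is unit upper-triangular in the subbasis given by the non-projectives (each relation has coefficient $+1$ on its endpoint $[M]$), so $\AR(\mod\Lambda)$ is a direct summand of $\KKK_0(\mod\Lambda, 0)$ of rank $n - p$, where $p$ is the number of indecomposable projectives. On the other hand $\KKK_0(\mod\Lambda, 0)/\Ex(\mod\Lambda) = \KKK_0(\mod\Lambda)$ is free of rank $p$ (one generator per simple module, matching $p$), forcing $\Ex(\mod\Lambda)$ to also have rank $n - p$. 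Together with the inclusion $\AR \subseteq \Ex$ this yields equality.

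For the ``only if'' direction, I would argue contrapositively: from infinitely many indecomposables I will produce an element of $\Ex(\mod\Lambda)$ outside $\AR(\mod\Lambda)$. The cleanest approach uses Auslander's defect functor. To each conflation $\xi \colon 0 \to X \to Y \to Z \to 0$ associate $\varphi_\xi \colon \mod\Lambda \to \Ab$ by $\varphi_\xi(M) = \coker(\Hom_\Lambda(Z,M) \to \Hom_\Lambda(Y,M))$; this is a finitely presented functor vanishing on projectives, and $\xi \mapsto \varphi_\xi$ descends to an additive map from $\Ex(\mod\Lambda)$ into the Grothendieck group of the category of such effaceable functors. Auslander's key observation is that the defects of AR-sequences are precisely the simple objects of this effaceable functor category, with the AR-sequence ending at a non-projective $M$ corresponding to the simple supported at $M$. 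Thus $\AR = \Ex$ forces the effaceable functor category to have finitely many simples, equivalently $\ind(\mod\Lambda)$ to be finite.

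The main obstacle is this bijection between indecomposable non-projectives and simple effaceable functors, together with injectivity of the defect map on $\Ex/\AR$. Setting these up rigorously requires some functor-category machinery (finitely presented functors, Yoneda, projective resolutions of the form $\Hom_\Lambda(-,Y) \to \Hom_\Lambda(-,X)$). As a more hands-on backup I would use Hom-pairings $\phi_N([M]) = \dim_k \Hom_\Lambda(M, N)$ for indecomposable $N$: the factorization property of AR-sequences makes $\phi_N$ vanish on nearly all AR-relations, while in the infinite-representation-type case Auslander's lemma on the existence of arbitrarily large indecomposables produces conflations on which some $\phi_N$ evaluates nontrivially, exhibiting the required element of $\Ex \setminus \AR$.
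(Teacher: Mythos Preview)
Both halves of your plan have genuine gaps.

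For the ``if'' direction, the ordering you want does not exist in general: whenever $\Lambda$ has $\tau$-periodic modules (any representation-finite self-injective algebra, for instance) you cannot arrange that $\tau M$ strictly precedes $M$. Already for $\Lambda = k[x]/(x^2)$ the unique non-projective indecomposable $k$ satisfies $\tau k \cong k$, and the sole AR-relation is $2[k]-[\Lambda]$, so the coefficient on the endpoint is $2$, not $1$. For $k[x]/(x^3)$ the matrix of AR-relations on the two non-projectives is $\left(\begin{smallmatrix}2 & -1\\ -1 & 2\end{smallmatrix}\right)$, determinant $3$, so no ordering makes it unit-triangular. And even granting that $\AR$ has rank $n-p$, inclusion plus rank equality does not force $\AR=\Ex$ unless you already know $\AR$ is saturated in $\KKK_0(\mod\Lambda,0)$, which is exactly what the triangularity was meant to supply. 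The paper avoids this entirely: it sends each conflation to the effaceable functor $M=\coker\EE(-,g)$, uses Schanuel's lemma to see that $[X]-[Y]+[Z]$ depends only on $M$, and then inducts on the length of $M$ (finite, by Hom-finiteness and finite type) via the horseshoe lemma.

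For the ``only if'' direction, your defect-functor setup is close in spirit to the paper's, but the sentence ``$\AR=\Ex$ forces the effaceable functor category to have finitely many simples'' is a non-sequitur. The bijection between non-projective indecomposables and simple effaceable functors holds unconditionally; knowing that every class in the image of your map $\Ex\to\KKK_0(\eff)$ is a $\Z$-combination of simple classes says nothing about how many simples there are. (Also, your $\varphi_\xi$ as written is neither the covariant nor the contravariant defect and does not vanish on projectives.) The missing ingredient is what the paper isolates as condition (CF): if two conflations yield the same element of $\KKK_0(\mod\Lambda,0)$, then the pairings $F\mapsto \dim_k F(W)$---essentially your $\phi_N$---show their defects have equal support, hence one has finite length iff the other does. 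Combined with $\AR=\Ex$, this lets one compare an arbitrary defect with a semisimple one and conclude that \emph{every} effaceable functor has finite length; applying this to $\un{\EE}(-,\un{X})$ for a weak cogenerator $\un{X}$ of the stable category then bounds the indecomposables. Your backup Hom-pairing idea is pointing at the right invariant, but you have not said what to pair against to produce the contradiction.
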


Auslander conjectured that similar results hold for a more general class of categories other than $\mod\Lambda$. In this paper, we study this conjecture in the context of Quillen's \emph{exact categories}.
For a Krull-Schmidt exact category $\EE$, the Grothendieck group $\KKK_0(\EE)= \KKK_0(\EE,0)/\Ex(\EE)$ of $\EE$ is defined in the same way. Also the notion of AR sequences is defined over $\EE$, which we call \emph{AR conflations}. Thus we have the subgroup $\AR(\EE)$ of $\Ex(\EE)$, and the aim of this paper is to study the following question.
\begin{question}\label{q}
For a Krull-Schmidt exact category $\EE$, when are the following equivalent?
\begin{enumerate}
\item $\EE$ is of finite type, that is, there exist only finitely many indecomposable objects in $\EE$ up to isomorphism.
\item $\AR(\EE) = \Ex(\EE)$ holds.
\end{enumerate}
\end{question}

Let us mention some known results on this question. For the implication (1) $\Rightarrow$ (2), we have the following quite general result by the author:
\begin{theorem}[{\cite[Theorem 3.18]{en2}, Corollary \ref{finadm}}]
  Let $R$ be a complete noetherian local ring and $\EE$ a Krull-Schmidt exact $R$-category such that $\EE(X,Y)$ is a finitely generated $R$-module for each $X,Y \in \EE$. If $\EE$ has enough projectives and is of finite type, then $\AR(\EE) = \Ex(\EE)$ holds.
\end{theorem}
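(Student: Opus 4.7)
The plan is to adapt Auslander's original argument for artin algebras to this setting by working inside the functor category of $\EE$. Since $\EE$ is of finite type, fix a direct sum $M = M_1 \oplus \cdots \oplus M_n$ of representatives of the isomorphism classes of indecomposables and set $A := \End_\EE(M)^{\op}$. The hypothesis on $R$ together with Hom-finiteness makes $A$ into a module-finite $R$-algebra; in particular $A$ is semiperfect and noetherian, and finite length $A$-modules satisfy the Jordan--H\"older theorem. The Yoneda functor $h := \Hom_\EE(M,-) : \EE \to \mod A$ restricts to an additive equivalence $\EE \simeq \proj A$.

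Next I would invoke the existence of AR conflations: over a complete noetherian local base with the stated Hom-finiteness and enough projectives, the Auslander--Reiten--Smal{\o} machinery (built via Nakayama-type dualities on $\mod A$) produces an AR conflation $\alpha_Z \colon 0 \to \tau Z \to E_Z \to Z \to 0$ for every non-projective indecomposable $Z \in \EE$. Under $h$ this becomes a minimal projective presentation in $\mod A$ of the simple top $S_Z$ of $h(Z)$.

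The main step is to prove, by induction on $\ell(\eta) := \length_A \coker(h(Y) \to h(Z))$, that every conflation $\eta \colon 0 \to X \to Y \to Z \to 0$ satisfies $[X] - [Y] + [Z] \in \AR(\EE)$. If $\ell(\eta) = 0$ the deflation $Y \to Z$ is a retraction in $\EE$, so $\eta$ is split and the relation is trivial. Otherwise pick a simple quotient $S = S_{Z'}$ of $\coker(h(Y) \to h(Z))$; since $h$ takes conflations to short exact sequences and is exact on projective $Z'$, the evaluation $F_\eta(Z')$ vanishes whenever $Z'$ is projective in $\EE$, forcing $Z'$ to be a non-projective indecomposable summand of $M$. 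Lift the projective cover $h(Z') \to S$ through $h(Z) \to F_\eta$ and use Yoneda to obtain a morphism $Z' \to Z$; using projectivity of $h(E_{Z'})$ in $\mod A$, extend it to a morphism of conflations $\alpha_{Z'} \to \eta$. A standard pullback-pushout (octahedron-type) construction then produces a conflation $\eta'$ with $[\eta] - [\alpha_{Z'}] = [\eta']$ in $\KKK_0(\EE,0)$ and $\ell(\eta') < \ell(\eta)$, completing the induction.

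The main obstacle is the inductive step, specifically verifying that the $\eta'$ obtained from the pullback-pushout is genuinely a conflation in the exact structure of $\EE$ and that its associated cokernel functor is strictly shorter. The Krull--Schmidt hypothesis lets us isolate the simple quotient as a single $S_{Z'}$, while enough projectives and the equivalence $\EE \simeq \proj A$ guarantee that all the needed lifts and the resulting short exact sequences live in $\EE$ rather than just in $\mod A$. Once these points are checked, the remaining bookkeeping reduces to standard manipulations in the abelian category $\mod A$, where the induction is classical.
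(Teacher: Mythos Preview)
Your overall strategy---pass to the functor category (equivalently $\mod A$ for $A=\End_\EE(M)^{\op}$) and induct on the length of the cokernel functor $F_\eta=\coker\EE(-,g)$---is exactly the route the paper takes in Proposition~\ref{part1}. The paper phrases it intrinsically in $\Mod\EE$, uses Schanuel's lemma to show $[X]-[Y]+[Z]$ depends only on $F_\eta$, picks a simple \emph{sub}module, and finishes with the horseshoe lemma; your simple-\emph{quotient} plus pullback--pushout variant is a harmless dual rearrangement of the same idea.

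There is, however, one genuine gap. Your induction is on $\ell(\eta)=\length_A\coker(h(Y)\to h(Z))$, but you never explain why this length is finite. The hypothesis is only that each $\EE(X,Y)$ is a \emph{finitely generated} $R$-module (Hom-noetherian), not that it has finite length; since $R$ is merely complete noetherian local, $A$ is noetherian but typically not artinian, so a finitely generated $A$-module need not have finite length. What you actually need is that the stable category $\un{\EE}$ is Hom-finite over $R$: once you know this, $F_\eta$ (which vanishes on projectives and hence lives in $\mod\un{\EE}$) has finite length by the standard support argument. The paper isolates this step as Lemma~\ref{homfin} (quoted from \cite[Proposition~A.5]{en2}): finite type together with enough projectives over a complete local base forces $\EE$ to have AR conflations, and the existence of AR conflations in turn forces $\un{\EE}$ to be Hom-finite. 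You do invoke the existence of AR conflations, but only to produce the sequences $\alpha_Z$, not to deduce the finiteness that makes the induction well-founded. This is precisely the place where the non-artinian hypothesis bites, and it is the one nontrivial ingredient beyond Auslander's original argument; once you insert it, your proof goes through.

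A minor terminological point: you write ``Hom-finiteness'' when describing the hypothesis, but the statement only gives Hom-noetherianness. Keeping these straight is exactly what is at stake in the gap above.
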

This can be applied to various situations in the representation theory of noetherian algebras. We provide a proof of this in Corollary \ref{finadm}.

On the other hand, the implication (2) $\Rightarrow$ (1) is more subtle. Indeed, some counter-examples are known, e.g. \cite[Section 3]{mmp}. However, there are affirmative results for this implication in some concrete situations: $\EE = \CM \Lambda$ for an $R$-order $\Lambda$ with finite global dimension \cite[Proposition 2.3]{ar1}, and the category of modules with good filtrations over standardly stratified algebras \cite{mmp,pr}. Also some partial results are known (\cite{hir,ko}).

In this paper, we first give a sufficient condition for (1) and (2) in Question \ref{q} to be equivalent, by using a functorial method. For a Krull-Schmidt exact category $\EE$, we consider the category $\Mod\EE$ of functors $\EE^{\op} \to \Ab$. There is an important subcategory $\eff\EE$ of $\Mod\EE$, the category of \emph{effaceable} functors (Definition \ref{effdef}), which nicely reflects the exact structure of $\EE$ and is closely related to the group $\Ex(\EE)$.
Then let us consider the following conditions:
\begin{enumerate}[label={\upshape(\alph*)}]
\item $\EE$ is of finite type.
\item $\EE$ is \emph{admissible} \cite{en2}, that is, every object in $\eff\EE$ has finite length in $\Mod\EE$.
\item $\AR(\EE)=\Ex(\EE)$ holds.
\end{enumerate}
Simple modules contained in $\eff\EE$ bijectively correspond to AR conflations in $\EE$ (Proposition \ref{exsimp}). Thus the condition (b) can be regarded as a functorial analogue of (c), hence it is somewhat closer to (c) than (a) is. The relation of these conditions are summarized in Figure \ref{fig}.

Secondly, we apply this condition to concrete situations and obtain results on Question \ref{q}.
For an artin algebra, we show the following result, which extends the results \cite[Theorem 3]{mmp} and \cite[Theorem 3.6]{pr}.
\begin{theoremi}[{\upshape = Theorem \ref{artinmain}}]\label{cora}
Let $\Lambda$ be an artin algebra and $\EE$ a contravariantly finite resolving subcategory of $\mod\Lambda$. Then the following are equivalent.
\begin{enumerate}
\item $\EE$ is of finite type.
\item $\AR(\EE) = \Ex(\EE)$ holds.
\end{enumerate}
\end{theoremi}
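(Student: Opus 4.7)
The plan is to prove this equivalence by routing through the intermediate admissibility condition (b) introduced in the paper, establishing the cycle $(1) \Rightarrow (2) \Rightarrow (\mathrm{b}) \Rightarrow (1)$. Each arrow uses a different feature: $(1) \Rightarrow (2)$ relies only on the general Corollary \ref{finadm}, while the nontrivial implications rely on the functorial dictionary of Figure \ref{fig} and on the contravariantly finite resolving hypothesis.

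For $(1) \Rightarrow (2)$, I apply Corollary \ref{finadm}. The center $R$ of the artin algebra $\Lambda$ is a commutative artin ring, hence a finite product of complete noetherian local rings, and $\EE$ is a Krull--Schmidt $R$-linear category with $R$-finite Hom-spaces and enough projectives (since it is resolving). All hypotheses of that corollary are met, so $\AR(\EE)=\Ex(\EE)$ follows immediately.

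For $(2) \Rightarrow (\mathrm{b})$, I invoke Proposition \ref{exsimp} together with the existence of AR conflations in $\EE$ (guaranteed by contravariant finiteness of $\EE$ in $\mod\Lambda$, via Auslander--Smal\o{}/Krause--Solberg). That proposition identifies simples of $\eff\EE$ with AR conflations, so none are missing, and the functorial dictionary summarized in Figure \ref{fig} then converts the Grothendieck-group equality $\AR(\EE)=\Ex(\EE)$ into a filtration statement at the level of $\Mod\EE$: every effaceable functor admits a finite composition series built from AR simples, which is exactly admissibility.

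The heart of the theorem, and the main obstacle, is $(\mathrm{b}) \Rightarrow (1)$. I would extend Auslander's classical theorem, which says $\Lambda$ is representation-finite iff every finitely presented functor on $\mod\Lambda$ has finite length. For each simple $\Lambda$-module $S$, take a right $\EE$-approximation $E \twoheadrightarrow S$; using the resolving property this produces a conflation in $\EE$ and hence an effaceable functor of finite length (by admissibility). The composition factors of these finitely many test functors should then exhaust the simples of $\eff\EE$ via an Auslander-style induction on length, using syzygy-closedness of $\EE$ to stay inside the category. Proposition \ref{exsimp} converts finiteness of simples in $\eff\EE$ into finiteness of indecomposables in $\EE$. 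The delicate point is precisely this exhaustion argument: that finitely many approximation-generated functors suffice. This is where the contravariantly finite resolving hypothesis is indispensable, as it supplies the bridge from the ambient finiteness in $\mod\Lambda$ to the simples of $\eff\EE$; without it, admissibility need not imply finite type.
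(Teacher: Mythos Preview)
Your overall scaffolding $(1)\Rightarrow(2)\Rightarrow(\mathrm{b})\Rightarrow(1)$ matches the paper's, but the middle step $(2)\Rightarrow(\mathrm{b})$ has a genuine gap. Invoking Figure~\ref{fig} is not a proof: that very figure records that $\AR=\Ex$ implies admissibility \emph{only under} condition~(CF), and you neither state nor verify (CF). There is no general ``functorial dictionary'' lifting an equality in $\KKK_0(\EE,0)$ to a finite filtration of an object of $\eff\EE$; knowing that the simples of $\eff\EE$ correspond to AR conflations (Proposition~\ref{exsimp}) says nothing about whether an arbitrary $M\in\eff\EE$ has finite length. The paper's argument at this point is Proposition~\ref{homfinfc}: because $\EE\subset\mod\Lambda$ is Hom-finite over the artinian centre $R$, the map $\chi_W:\KKK_0(\mod\EE)\to\Z$, $[F]\mapsto\length_R F(W)$, is well defined for each indecomposable $W$, and two effaceable functors with the same class in $\KKK_0(\mod\EE)$ therefore have the same support. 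This is exactly (CF), after which Theorem~\ref{fcfl} gives admissibility. Your write-up skips this entirely.

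Your $(\mathrm{b})\Rightarrow(1)$ is also more elaborate than what the paper does, and the ``delicate exhaustion argument'' you flag is in fact unnecessary. The paper does not attempt to show that finitely many test functors account for all simples of $\eff\EE$. Instead it uses a single object: a right $\EE$-approximation $X\to\Lambda/\rad\Lambda$ yields a weak cogenerator $\un X$ of $\un\EE$ (Proposition~\ref{weakcogen}), and admissibility then forces $\un\EE(-,\un X)\in\mod\un\EE\simeq\eff\EE$ to have finite length, hence finite support, so $\ind\un\EE$ is finite (Proposition~\ref{propab}). Your plan starts from the same approximation but then tries to run an Auslander-style induction through syzygies to exhaust the simples of $\eff\EE$; this is both harder and, as you acknowledge, not actually carried out. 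The weak-cogenerator shortcut avoids that difficulty completely.
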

When $R$ is not artinian, we prove the following result. This extends \cite{hir,ko} to the non-commutative case, and \cite[Proposition 2.3]{ar1} to Gorenstein orders. Also our method provides another proof of \cite[Proposition 2.3]{ar1}.
\begin{theoremi}[{\upshape = Theorem \ref{ordermain}, Corollary \ref{syzygymain}}]
Let $R$ be a complete Cohen-Macaulay local ring and $\Lambda$ an $R$-order with at most an isolated singularity. Suppose that $\Ex(\CM \Lambda) = \AR(\CM \Lambda)$ holds. Then the following hold.
\begin{enumerate}
\item $\mathsf{\Omega CM}\,\Lambda$, the category of syzygies of Cohen-Macaulay $\Lambda$-modules, is of finite type.
\item If $\Lambda$ has finite global dimension or $\Lambda$ is a Gorenstein order, then $\CM\Lambda$ is of finite type.
\end{enumerate}
\end{theoremi}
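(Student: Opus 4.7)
The plan is to combine the hypothesis $\Ex(\CM\Lambda)=\AR(\CM\Lambda)$ with the functor-category framework developed earlier in the paper, exploiting the isolated singularity assumption to ensure a plentiful supply of AR conflations. First I would observe that since $R$ is a complete Cohen--Macaulay local ring and $\Lambda$ is an $R$-order with at most an isolated singularity, $\CM\Lambda$ is a Krull--Schmidt exact $R$-category with enough projectives in which every non-projective indecomposable is the right-hand term of an AR conflation; by Proposition \ref{exsimp} this gives a bijection between non-projective indecomposables of $\CM\Lambda$ and simples of $\eff(\CM\Lambda)$.

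The central technical step is to prove that, under these hypotheses, $\Ex=\AR$ forces $\CM\Lambda$ to be admissible, i.e., every effaceable functor has finite length in $\Mod(\CM\Lambda)$. The strategy is to read $\Ex=\AR$ as saying that in the relevant Grothendieck group every class of an effaceable is a $\Z$-combination of classes of simples, and then to upgrade this to an actual filtration of the functor by subobjects with simple quotients. The noetherianity required to terminate such a filtration process is inherited from the complete noetherian base ring $R$ via finite generation of Hom spaces in $\CM\Lambda$, while the upgrade relies on the correspondence between subobjects of an effaceable and subconflations in $\CM\Lambda$. I expect this lifting from a Grothendieck-group relation to an honest filtration to be the main obstacle.

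Once admissibility is established, part (1) should follow from the general principle summarised in Figure \ref{fig}: admissibility of $\EE$ controls the representation type of the syzygy subcategory, since every indecomposable $X\in\mathsf{\Omega CM}\,\Lambda$ is detected by a simple subfunctor of an effaceable functor coming from a projective presentation of some $M\in\CM\Lambda$, and finite length of effaceables together with the bijection above bounds the number of such $X$.

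For part (2), I would bootstrap part (1) using the additional structure. If $\Lambda$ is a Gorenstein order, then $\CM\Lambda$ is Frobenius and $\Omega$ induces an auto-equivalence of the stable category $\underline{\CM}\Lambda$; hence every non-projective indecomposable of $\CM\Lambda$ appears as a non-projective summand of some object in $\mathsf{\Omega CM}\,\Lambda$, and finite type of the latter together with the finiteness of $\proj\Lambda$ yields finite type of $\CM\Lambda$. If $\Lambda$ has finite global dimension, then every $M\in\CM\Lambda$ has finite projective dimension and thus fits in a finite projective resolution whose intermediate syzygies lie in $\mathsf{\Omega CM}\,\Lambda$; combined with the finiteness of $\proj\Lambda$ and an iteration along such resolutions, this bounds the indecomposables of $\CM\Lambda$.
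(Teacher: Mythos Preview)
Your plan hinges on proving that $\AR(\CM\Lambda)=\Ex(\CM\Lambda)$ forces $\CM\Lambda$ to be admissible \emph{without} the extra hypotheses of part~(2). This is precisely what the paper does \emph{not} do, and in fact explicitly flags as open (see the Remark after Theorem~\ref{ordermain}). Admissibility is only obtained in the paper by verifying the condition~(CF), and the proof of~(CF) in Proposition~\ref{glfc} genuinely uses either finite projective dimension (to produce a bounded long exact sequence in $\mod\un{\EE}$ showing $[M_1]=[M_2]$ in $\KKK_0(\mod\un{\EE})$) or the Frobenius property (to control $\ker\un{\EE}(-,\un{f_i})$ via the inverse syzygy). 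Your proposed ``upgrade from a Grothendieck-group relation to an honest filtration'' is exactly the step that fails in general: the relation $[M_1]=[M_2]$ lives in $\KKK_0(\mod\EE)$, not in $\KKK_0(\mod\un{\EE})$, and without Hom-finiteness of $\EE$ itself there is no length function available to transfer equality of classes into equality of supports.

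Consequently your route to part~(1) collapses. The paper's argument for~(1) (Theorem~\ref{omega}) avoids admissibility entirely: it proves only the weaker statement that for every $M\in\eff\EE$ the set $\supp M\cap\ind(\Omega\un{\EE})$ is finite. The key ingredient you are missing is Lemma~\ref{omega-}: the syzygy functor $\Omega:\un{\EE}\to\Omega\un{\EE}$ has a fully faithful left adjoint $\Omega^-$, and this adjunction is what allows one to pull the support of the auxiliary functor $N_1=\ker\un{\EE}(-,\un{f_1})$ restricted to $\Omega\un{\EE}$ back into $\supp M_1$. Your bootstrap for part~(2) is valid in the Gorenstein case (where $\Omega$ is an autoequivalence of $\un{\CM}\Lambda$, so $\Omega\CM\Lambda$ finite type immediately gives $\CM\Lambda$ finite type), but it breaks down for finite global dimension: knowing that all syzygies $\Omega^i M$ lie in a finite-type subcategory does not bound $M$ itself, since $\Omega$ need not reflect isomorphisms on $\un{\EE}$ (already for a hereditary order, $\Omega$ kills everything). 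The paper instead handles part~(2) directly via~(CF) and Theorem~\ref{fcfl}, not through~(1).
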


Actually, most of the results above are deduced from the following relations between (a), (b) and (c):
\begin{theoremi}[Proposition \ref{propab}, Theorem \ref{fcfl}]
  Let $\EE$ be a Krull-Schmidt exact $R$-category with a projective generator over a commutative noetherian ring $R$. Then the implications in Figure \ref{fig} hold.
\end{theoremi}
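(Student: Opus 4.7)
The plan is to establish the two implications shown in Figure \ref{fig}, namely \emph{(a) $\Rightarrow$ (b)} (Proposition \ref{propab}) and \emph{(b) $\Rightarrow$ (c)} (Theorem \ref{fcfl}). Both pivot on the functor category $\Mod\EE$ and the interaction of $\eff\EE$ with the exact structure on $\EE$.

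For \emph{(a) $\Rightarrow$ (b)}: finite type gives $\EE = \add M$ for a single object $M$, under which finitely presented functors on $\EE$ identify with $\mod\Gamma$, where $\Gamma := \End_\EE(M)^{\op}$ is a module-finite $R$-algebra by the $R$-linearity and finite generation of Homs. The projective generator $P$ supplies an idempotent $e \in \Gamma$, and $F \in \Mod\EE$ is effaceable precisely when $Fe = 0$, so $\eff\EE$ sits inside $\mod(\Gamma / \Gamma e \Gamma)$. The remaining task is to show each such $F$ has finite length in $\Mod\EE$. Since $\Gamma/\Gamma e\Gamma$ is module-finite over the noetherian ring $R$, I would follow the author's approach in \cite{en2}: control the $R$-support of $F$, localize and complete at the relevant maximal ideals of $R$, and use the Krull–Schmidt/Azumaya structure to reduce to an artinian local situation in which finite length is automatic.

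For \emph{(b) $\Rightarrow$ (c)}: I would exploit the bijection of Proposition \ref{exsimp} between AR conflations in $\EE$ and simple objects of $\eff\EE$. Attach to each conflation $\sigma : 0 \to X \to Y \to Z \to 0$ the effaceable functor $F_\sigma := \coker(\EE(-,Y) \to \EE(-,Z))$, whose ``Euler class'' is $[X]-[Y]+[Z] \in \KKK_0(\EE,0)$. The core step is to define a group homomorphism
\[
\Phi : \KKK_0(\eff\EE) \longrightarrow \KKK_0(\EE,0)/\AR(\EE), \qquad [F_\sigma] \longmapsto \overline{[X]-[Y]+[Z]},
\]
where $\KKK_0(\eff\EE)$ denotes the Grothendieck group of the abelian category of finite length effaceable functors; by admissibility this group is free on the isomorphism classes of simples. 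By the bijection of Proposition \ref{exsimp}, $\Phi$ kills every simple (each of which comes from an AR conflation), and hence $\Phi$ is identically zero. Consequently $[X]-[Y]+[Z] \in \AR(\EE)$ for every conflation in $\EE$, which is the desired equality $\Ex(\EE) = \AR(\EE)$.

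The main obstacle is the well-definedness of $\Phi$ on short exact sequences in $\eff\EE$: given $0 \to F' \to F \to F'' \to 0$ arising from conflations $\sigma', \sigma, \sigma''$, one must produce a compatible configuration of conflations in $\EE$ whose alternating sums satisfy $[\sigma] \equiv [\sigma'] + [\sigma''] \pmod{\AR(\EE)}$. Because $\EE$ is only exact, not abelian, this requires a horseshoe-style lifting of projective presentations followed by a $3 \times 3$/snake-type diagram chase performed internally to the exact structure. This is precisely where the projective generator is essential: it guarantees that every effaceable functor admits a presentation by representables and provides the flexibility needed to perform the lifts. Once additivity is checked, the telescoping identity in $\KKK_0(\EE,0)$ follows and the argument closes.
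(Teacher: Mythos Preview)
Your proposal covers only half of Figure~\ref{fig}. The figure contains four implications, and the two you omit --- (b)$\Rightarrow$(a) assuming $\un{\EE}$ has a weak cogenerator, and (c)$\Rightarrow$(b) assuming condition (CF) --- are precisely the nontrivial converses that Proposition~\ref{propab} and Theorem~\ref{fcfl} are set up to prove. These are not side remarks: (b)$\Rightarrow$(a) is what eventually yields representation-finiteness in the applications of Section~\ref{sec4}, and (c)$\Rightarrow$(b) is the entire reason the condition (CF) is introduced.

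For (a)$\Rightarrow$(b) there is a further gap: you never invoke the governing hypothesis on that arrow, namely that $\un{\EE}$ is Hom-finite, and without it the argument fails. The base hypotheses of the theorem do not include Hom-noetherianness of $\EE$, so you cannot assert that $\Gamma$ is module-finite over $R$; and even granting that, knowing only that $\Gamma/\Gamma e\Gamma$ is module-finite over a noetherian $R$ gives no bound on lengths of its modules (take $R$ non-artinian and $\Gamma/\Gamma e\Gamma=R$). No amount of localization or completion repairs this. The paper's argument is immediate once the correct hypothesis is used: $\eff\EE\simeq\mod\un{\EE}$ by Lemma~\ref{effstable}, and since $\un{\EE}$ is Hom-finite with $\ind\un{\EE}$ finite, every object of $\mod\un{\EE}$ has finite length by Lemma~\ref{fl}. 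Your treatment of (b)$\Rightarrow$(c), on the other hand, is essentially the paper's proof (Schanuel plus horseshoe, Proposition~\ref{part1}) rephrased via $\KKK_0(\eff\EE)$; note however that the projective generator plays no role in this direction --- every $M\in\eff\EE$ already has, by definition, a length-two projective resolution by representables arising from a conflation (Proposition~\ref{serre}(2)), so your claim that the generator is ``essential'' for the horseshoe lift is mistaken.
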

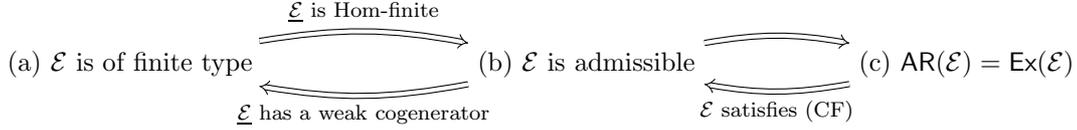
\begin{figure}[h]
\begin{tikzpicture}
 \node (a) at (-6,0)  {(a) $\EE$ is of finite type};
 \node (b) at (0,0)  {(b) $\EE$ is admissible};
 \node (c) at (5,0) {(c) $\AR(\EE) = \Ex(\EE)$};

 \draw [-implies,double equal sign distance] (a) to [bend left=10] node[auto] {\footnotesize $\un{\EE}$ is Hom-finite} (b) ;
 \draw [-implies,double equal sign distance] (b) to [bend left=10] node[auto] {\footnotesize $\un{\EE}$ has a weak cogenerator} (a) ;
 \draw [-implies,double equal sign distance] (b) to [bend left=10] (c) ;
 \draw [-implies,double equal sign distance] (c) to [bend left=10] node[auto] {\footnotesize $\EE$ satisfies (CF)} (b) ;
\end{tikzpicture}
\caption{Summary of Section \ref{sec3}}\label{fig}
\end{figure}
Here we introduce the condition \emph{(CF)} on exact categories concerning the Grothendieck groups (see Definition \ref{fcdef}).

The paper is organized as follows.
In Section \ref{sec2}, we introduce basic definitions and collect some results about Grothendieck group and effaceable functors.
In Section \ref{sec3}, we introduce some conditions on $\EE$ and prove the relations indicated in Figure \ref{fig}.
In Section \ref{sec4}, we verify that the conditions are satisfied in some concrete cases.
In Section \ref{sec5}, we study the finiteness of the category of syzygies.
In Appendix A, we provide some interesting properties of the category of effaceable functors.

\subsection{Notation and conventions}
Throughout this paper, we assume that \emph{all categories and functors are additive} and \emph{all subcategories are additive, full and closed under isomorphisms}. For an object $G$ of an additive category $\EE$, we denote by $\add G$ the category consisting of all direct summands of finite direct sums of $G$. We say that an additive category is \emph{of finite type} if there exists an object $G \in \EE$ such that $\EE = \add G$ holds.

For a Krull-Schmidt category $\EE$, we denote by $\ind\EE$ the set of isomorphism classes of indecomposable objects in $\EE$. We refer the reader to \cite{krause} for the basics of Krull-Schmidt categories.
We say that an additive category $\EE$ is an \emph{idempotent complete} if every idempotent morphism splits. It is well-known that a Krull-Schmidt category is idempotent complete.

As for exact categories, we use the terminologies \emph{inflations}, \emph{deflations} and \emph{conflations}. We refer the reader to \cite{buhler} for the basics of exact categories.
We say that an exact category $\EE$ has a \emph{projective generator $P$} if $\EE$ has enough projectives $\add P$.

\section{Preliminaries}\label{sec2}
First we introduce the basic definition about Auslander-Reiten theory in a Krull-Schmidt category. Let $\EE$ be a Krull-Schmidt category and $\JJ$ its Jacobson radical. A morphism $g:Y\to Z$ in $\EE$ is called \emph{right almost split} if $Z$ is indecomposable, $g$ is in $\JJ$ and any morphism $h:W \to Z$ in $\JJ$ factors through $g$. Dually we define \emph{left almost split}.

For a Krull-Schmidt exact category $\EE$, we say that a conflation $0 \to X \xrightarrow{f} Y \xrightarrow{g} Z \to 0$ in $\EE$ is an \emph{AR conflation} if $f$ is left almost split and $g$ is right almost split. We say that an indecomposable object $Z$ in $\EE$ \emph{admits a right AR conflation} if there exists an AR conflation ending at $Z$. We say that $\EE$ \emph{has right AR conflations} if every indecomposable non-projective object admits a right AR conflation. Left AR conflations are defined dually, and we say that $\EE$ \emph{has AR conflations} if it has both right and left AR conflations.

Next we introduce some notation concerning the Grothendieck group. For a Krull-Schmidt exact category $\EE$, let $\KKK_0(\EE,0)$ be a free abelian group $\bigoplus_{[X] \in \ind \EE} \mathbb{Z}\cdot[X]$ generated by the set $\ind \EE$ of isomorphism classes of indecomposable objects in $\EE$, which we identify with the Grothendieck group of $\EE$ with the split exact structure, that is, the exact structure where all conflations are split exact.
We denote by $\Ex(\EE)$ the subgroup of $\KKK_0(\EE,0)$ generated by
\[
\{ [X] - [Y] + [Z] \text{ $|$ there exists a conflation $X \infl Y \defl Z$ in $\EE$} \}.
\]
We call the quotient group $\KKK_0(\EE) :=\KKK_0(\EE,0) / \Ex(\EE)$ the \emph{Grothendieck group} of $\EE$.
We denote by $\AR(\EE)$ the subgroup of $\Ex(\EE)$ generated by
\[
\{ [X] - [Y] + [Z] \text{ $|$ there exists an AR conflation $X \infl Y \defl Z$ in $\EE$} \}.
\]

Throughout this paper, we make use of functor categorical arguments. Let us recall the related concepts. For an additive category $\EE$, a \emph{right $\EE$-module} $M$ is a contravariant additive functor $M: \EE^{\op} \to \Ab$ from $\EE$ to the category of abelian groups $\Ab$. We denote by $\Mod\EE$ the category of right $\EE$-modules, and morphisms are natural transformations between them.
Then the category $\Mod \EE$ is a Grothendieck abelian category with enough projectives, and projective objects are precisely direct summands of (possibly infinite) direct sums of representable functors.

We denote by $\mod \EE$ the category of finitely presented $\EE$-modules, that is, the modules $M$ such that there exists an exact sequence $\EE(-,X) \to \EE(-,Y) \to M \to 0$ for some $X,Y$ in $\EE$. This category $\mod\EE$ is not necessarily an abelian category, but $\mod\EE$ is closed under extensions in $\Mod\EE$ by the horseshoe lemma. Thereby we always regard $\mod\EE$ as an exact category, which has enough projectives in the sense of exact category. Moreover, if $\EE$ is idempotent complete, then projectives in $\mod\EE$ are precisely representable functors.

Now let us introduce the notion of \emph{effaceability} of $\EE$-modules, which plays an essential role throughout this paper. This notion was originally introduced by Grothendieck \cite[p. 141]{gro}.
\begin{definition}\label{effdef}
Let $\EE$ be an exact category. A right $\EE$-module $M$ is called \emph{effaceable} if there exists a deflation $f:Y \defl Z$ in $\EE$ such that $\EE(-,Y) \xrightarrow{\EE(-,f)} \EE(-,Z) \to M \to 0$ is exact. We denote by $\eff \EE$ the subcategory of $\mod\EE$ consisting of effaceable functors.
\end{definition}
The category $\eff\EE$ plays a quite essential role to study exact categories in a functorial method.
This definition for exact categories is due to \cite{fiorot}, and the category $\eff\EE$ plays an important role in \cite{en2} to classify exact structures on a given additive category. For the compatibility of Grothendieck's effaceability, see Proposition \ref{grodef}.

In the rest of this section, we collect basic properties about functor categories and effaceable functors which we need in the sequel.
\begin{proposition}\label{kssimp}
Let $\EE$ be a Krull-Schmidt category and $\JJ$ its Jacobson radical. Put $S_Z := \EE(-,Z)/\JJ(-,Z)$ for $Z \in \EE$. Then the following hold.
\begin{enumerate}
\item The map $Z \mapsto S_Z$ gives a bijection between $\ind\EE$ and the set of isomorphism classes of simple $\EE$-modules.
\item A morphism $g:Y \to Z$ in $\EE$ is right almost split if and only if $Z$ is indecomposable and $\EE(-,Y) \xrightarrow{\EE(-,g)} \EE(-,Z) \to S_Z \to 0$ is exact.
\end{enumerate}
\end{proposition}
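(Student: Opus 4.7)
The plan is to use Yoneda's lemma together with the locality of endomorphism rings of indecomposables in a Krull-Schmidt category. For (1), I would first show that $S_Z$ is simple whenever $Z$ is indecomposable. Evaluated at an indecomposable $W$, we have $S_Z(W) = \EE(W,Z)/\JJ(W,Z)$, which vanishes if $W \not\cong Z$ (every morphism between non-isomorphic indecomposables lies in $\JJ$) and equals the division ring $\End(Z)/\JJ(Z,Z)$ when $W \cong Z$. Given a nonzero submodule $N \subseteq S_Z$, evaluation at an indecomposable where $N$ is nonzero forces $N(Z) \neq 0$; under the right $\End(Z)$-action induced by functoriality, $N(Z)$ is a nonzero right ideal of the division ring $S_Z(Z)$, hence equal to it, so the class of $\id_Z$ lies in $N(Z)$, and Yoneda yields $N = S_Z$. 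Injectivity of $Z \mapsto S_Z$ then follows: $S_Z \cong S_{Z'}$ implies $S_{Z'}(Z) \neq 0$, giving a morphism $Z \to Z'$ outside $\JJ$, which is an isomorphism because $Z$ and $Z'$ are indecomposable.

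For surjectivity, given a simple $\EE$-module $S$, I would choose an indecomposable $X$ with $S(X) \neq 0$ (possible after decomposing any object at which $S$ is nonzero into indecomposables) and a nonzero element $s \in S(X)$. Yoneda produces a nonzero natural transformation $\varphi: \EE(-,X) \to S$, which is surjective since $S$ is simple. The key point is that $\ker \varphi \supseteq \JJ(-,X)$. At $X$, $\ker\varphi(X)$ is a right ideal of $\End(X)$ not containing $\id_X$, hence contained in $\JJ(X,X)$ by locality of $\End(X)$. If $\JJ(-,X)$ were not contained in $\ker\varphi$, the image of $\JJ(-,X)$ in $S$ would be a nonzero submodule and therefore all of $S$; evaluating at $X$ would then force $\JJ(X,X) + \ker\varphi(X) = \End(X)$, contradicting the inclusion just proved. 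Hence $\varphi$ factors through a nonzero, and therefore bijective, map $S_X \to S$.

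For (2), the statement is essentially a reformulation of the definition: $g$ is right almost split exactly when $Z$ is indecomposable, $g \in \JJ(Y,Z)$, and every $h: W \to Z$ in $\JJ(W,Z)$ factors through $g$, which is precisely the assertion that the image of $\EE(-,g): \EE(-,Y) \to \EE(-,Z)$ equals $\JJ(-,Z)$, i.e.\ that the cokernel of $\EE(-,g)$ is $S_Z$. The only substantive point in the whole proposition is the right-ideal/locality argument used for surjectivity in (1); everything else is a mechanical consequence of Yoneda and the structure of morphisms between indecomposables in a Krull-Schmidt category.
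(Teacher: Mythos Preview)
Your proof is correct. The paper does not actually give a proof of this proposition: it simply writes ``It is well-known, see \cite{artin2} for example'' and moves on. Your argument is the standard one and supplies exactly the details a reader would find upon chasing that reference: simplicity of $S_Z$ via the support computation and the division-ring structure of $\End(Z)/\JJ(Z,Z)$; surjectivity of $Z\mapsto S_Z$ via Yoneda and the locality of $\End(X)$ for an indecomposable $X$ in the support of a given simple; and the direct unpacking of the definition of right almost split for part (2). There is nothing to correct.
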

\begin{proof}
It is well-known, see \cite{artin2} for example.
\end{proof}
In particular, simple $\EE$-modules contained in $\mod\EE$ bijectively correspond to indecomposable objects in $\EE$ which admit right almost split maps.
Moreover if $\EE$ is an exact category, then one can show more on $\eff\EE$ as follows.
\begin{proposition}\label{exsimp}
Let $\EE$ be a Krull-Schmidt exact category and $Z$ an indecomposable object in $\EE$. Then the following are equivalent.
\begin{enumerate}
\item $Z$ admits a right AR conflation.
\item $S_Z$ belongs to $\eff\EE$.
\item $S_Z$ is finitely presented and $Z$ is non-projective.
\end{enumerate}
In particular, simple $\EE$-modules contained in $\eff\EE$ bijectively correspond to indecomposables in $\EE$ which admit right AR conflations.
\end{proposition}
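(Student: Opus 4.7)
The plan is to prove the cycle $(1) \Rightarrow (2) \Rightarrow (3) \Rightarrow (1)$, with the first two implications being short and $(3) \Rightarrow (1)$ doing the real work.

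For $(1) \Rightarrow (2)$, an AR conflation $X \infl Y \xrightarrow{g} Z$ has $g$ simultaneously a deflation and right almost split, so Proposition \ref{kssimp}(2) immediately exhibits $\EE(-,Y) \xrightarrow{\EE(-,g)} \EE(-,Z) \to S_Z \to 0$ as an effaceable presentation of $S_Z$. For $(2) \Rightarrow (3)$, finite presentation is automatic from $\eff\EE \subseteq \mod\EE$, and non-projectivity of $Z$ follows from a one-line Hom-argument: if $Z$ were projective and $f\colon Y \defl W$ were the deflation in some effaceable presentation of $S_Z$, then $\EE(Z,-)$ would send $f$ to a surjection, forcing $S_Z(Z) = 0$, in contradiction with $S_Z(Z) = \End(Z)/\JJ(Z,Z) \ne 0$.

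The substantive implication is $(3) \Rightarrow (1)$. I would begin by producing a presentation of $S_Z$ of the specific form $\EE(-,Y_0) \xrightarrow{\EE(-,g_0)} \EE(-,Z) \to S_Z \to 0$. The point is that $\EE(-,Z) \twoheadrightarrow S_Z$ is a projective cover in $\Mod\EE$: a Nakayama-style check using locality of $\End(Z)$ shows that any subfunctor $N \subsetneq \EE(-,Z)$ with $N + \JJ(-,Z) = \EE(-,Z)$ must contain $\id_Z$ and hence equal $\EE(-,Z)$, so $\JJ(-,Z)$ is the unique maximal subfunctor; finite presentation of $S_Z$ combined with Schanuel's lemma then forces $\JJ(-,Z)$ to be finitely generated. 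By Proposition \ref{kssimp}(2), the resulting $g_0$ is right almost split. To promote $g_0$ to a deflation I would exploit non-projectivity of $Z$: choose any conflation $X' \infl Y' \xrightarrow{g'} Z'$ together with a morphism $a\colon Z \to Z'$ not factoring through $g'$, and pull back along $a$ to obtain a deflation $h\colon Y'' \defl Z$. Necessarily $h \in \JJ(Y'',Z)$, since otherwise $a$ would lift to $Y'$. Then the combined morphism $(g_0,h)\colon Y_0 \oplus Y'' \to Z$ is a deflation (since $h$ is), lies in $\JJ$, inherits right-almost-splitness from $g_0$, and has Yoneda image $\JJ(-,Z)$, so it still presents $S_Z$. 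Passing to the right-minimal summand of $(g_0,h)$, which remains a deflation by Quillen's obscure axiom since it factors the given deflation through a split projection, yields a right-minimal right almost split deflation onto $Z$, whose associated conflation is an AR conflation by the standard argument in Auslander-Reiten theory.

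The main obstacle is $(3) \Rightarrow (1)$, specifically the step of converting the purely functorial datum ``$g_0$ is right almost split'' into a genuine deflation. The key conceptual move is additive: rather than try to alter $g_0$ itself, I adjoin an auxiliary deflation $h$ manufactured from a single witness of non-projectivity; the sum $(g_0,h)$ then inherits being a deflation and being right almost split for free, and the Yoneda image is unchanged. A secondary technical point is the passage to the right-minimal part, which is needed to guarantee indecomposability of the kernel and left-almost-splitness of the inflation, and which relies on the obscure axiom to preserve deflation-ness under restriction to a direct summand.
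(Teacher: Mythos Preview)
Your proposal is correct and follows the same cycle $(1)\Rightarrow(2)\Rightarrow(3)\Rightarrow(1)$ as the paper. Where the paper simply cites \cite[Propositions 2.11 and A.3]{en2} for the non-projectivity step in $(2)\Rightarrow(3)$ and for the whole of $(3)\Rightarrow(1)$, you supply the underlying arguments directly; these are sound and are essentially the standard ones (your use of the obscure axiom/idempotent-completeness to upgrade $(g_0,h)$ and its right-minimal summand to deflations is exactly the mechanism behind \cite[Proposition 7.6]{buhler}, which the paper also invokes elsewhere).
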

\begin{proof}
(1) $\Rightarrow$ (2):
We have a deflation $g:Y\defl Z$ which is right almost split. Thus $S_Z \iso \coker \EE(-,g)$ belongs to $\eff\EE$.

(2) $\Rightarrow$ (3):
Since $S_Z$ is in $\eff\EE$, it is finitely presented. By \cite[Propositions 2.11]{en2}, we have that $Z$ is non-projective since $S_Z(Z) \neq 0$ holds.

(3) $\Rightarrow$ (1):
This is precisely \cite[Proposition A.3]{en2}.
\end{proof}

The following says that the category $\eff\EE$ has nice properties and controls the exact structure of $\EE$. Here we say that an $\EE$-module $M$ is \emph{finitely generated} if there exists a surjection from a representable functor onto $M$.
\begin{proposition}\label{serre}
Let $\EE$ be an idempotent complete exact category. Then the following holds.
\begin{enumerate}
\item Suppose that there exists an exact sequence $0 \to M_1 \to M \to M_2 \to 0$ in $\Mod\EE$ and that $M_1$ is finitely generated. Then $M$ is in $\eff\EE$ if and only if both $M_1$ and $M_2$ are in $\eff\EE$.
\item A complex $X \xrightarrow{f} Y \xrightarrow{g} Z$ in $\EE$ is a conflation if and only if there exists an exact sequence
\[
0 \to \EE(-,X) \xrightarrow{\EE(-,f)} \EE(-,Y) \xrightarrow{\EE(-,g)} \EE(-,Z) \to M \to 0
\]
in $\Mod\EE$ with $M$ in $\eff\EE$.
\end{enumerate}
\end{proposition}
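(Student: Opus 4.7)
The plan is to prove the two parts in turn, with part (2) a fairly direct consequence of the definitions and part (1) split into the four implications comprising the two equivalences.

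For part (2), the forward direction is quick: since $X \infl Y \defl Z$ is a conflation, $f$ is the kernel of $g$ in $\EE$, so the Yoneda embedding preserves this kernel and yields exactness of $0 \to \EE(-,X) \to \EE(-,Y) \to \EE(-,Z)$; the cokernel $M := \coker \EE(-,g)$ lies in $\eff\EE$ by definition since $g$ is a deflation. For the converse, I would pick a deflation $g' : \tilde{Y} \defl Z$ with $M = \coker \EE(-,g')$, observe that $\im \EE(-,g) = \ker(\EE(-,Z) \twoheadrightarrow M) = \im \EE(-,g')$, and use projectivity of $\EE(-,\tilde{Y})$ together with Yoneda's lemma to factor $g'$ as $g \circ h$ for some $h : \tilde{Y} \to Y$. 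Since $g \circ h$ is a deflation, so is $g$; setting $K := \ker g$ in $\EE$ and applying the forward direction to the conflation $K \infl Y \defl Z$ produces the \emph{same} exact sequence $0 \to \EE(-,K) \to \EE(-,Y) \to \EE(-,Z) \to M \to 0$. Comparison with the given sequence then yields an isomorphism $X \iso K$ identifying $f$ with the canonical inflation, so $X \xrightarrow{f} Y \xrightarrow{g} Z$ is indeed a conflation.

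For the ``if'' direction of (1), I would apply the horseshoe lemma to $0 \to M_1 \to M \to M_2 \to 0$ using deflation-presentations $\EE(-,Y_i) \xrightarrow{\EE(-,p_i)} \EE(-,Z_i) \to M_i \to 0$ coming from the definition of $\eff\EE$. This produces a presentation $\EE(-,Y_1 \oplus Y_2) \to \EE(-,Z_1 \oplus Z_2) \to M \to 0$ whose middle map, by Yoneda, has the form $\EE(-,h)$ for some $h : Y_1 \oplus Y_2 \to Z_1 \oplus Z_2$ that is upper-triangular with diagonal entries $p_1, p_2$. I would then verify that any such $h$ is itself a deflation by factoring it as $(1_{Z_1} \oplus p_2) \circ E \circ (p_1 \oplus 1_{Y_2})$, where $E$ is an upper-triangular automorphism of $Z_1 \oplus Y_2$, invoking only the stability of deflations under composition and direct sum with identities.

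The ``only if'' direction of (1) is where I expect the main difficulty. Writing $M = \coker \EE(-,p)$ for a deflation $p : Y \defl Z$, I would first exploit finite generation of $M_1$ to pick an epimorphism $\EE(-,W) \twoheadrightarrow M_1$ and lift its composition with $M_1 \hookrightarrow M$ through $\EE(-,Z) \twoheadrightarrow M$, obtaining via Yoneda a morphism $w : W \to Z$. The combined map $(p,w) : Y \oplus W \to Z$ is again a deflation (it factors as $p \oplus 1_W$ followed by the split epimorphism $Z \oplus W \to Z$, $(z,u) \mapsto z + w(u)$), and its image in $\EE(-,Z)$ coincides with the preimage of $M_1 \subseteq M$, yielding $M_2 = \coker \EE(-,(p,w)) \in \eff\EE$. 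The more delicate point is $M_1$ itself: here I would form the pullback $Y \times_Z W$ of $p$ along $w$, obtaining a deflation $\pi : Y \times_Z W \defl W$, and show by a direct Yoneda computation that $\ker\bigl[\EE(-,W) \xrightarrow{\EE(-,w)} \EE(-,Z) \twoheadrightarrow M\bigr]$ coincides with the image of $\EE(-,\pi)$ — indeed, $\varphi : A \to W$ lies in this kernel iff $w\varphi$ factors through $p$, iff $\varphi$ factors through $\pi$. This identifies $M_1$ with $\coker \EE(-,\pi) \in \eff\EE$, and it is precisely the finite generation of $M_1$ that makes the construction of $w$ (and hence of the pullback) possible.
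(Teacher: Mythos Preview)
Your proof is correct. For part~(2), the paper itself gives no argument but points to \cite[Theorem~2.7]{en2} and to the proof of Proposition~\ref{grodef} in the appendix; your converse direction is precisely the argument of the latter (factor a known deflation through $g$, then invoke \cite[Proposition~7.6]{buhler} via idempotent completeness to conclude $g$ is a deflation, and recover $X$ as the kernel). For part~(1), the paper defers entirely to \cite[Proposition~2.10]{en2} and supplies no proof, so there is nothing in the paper to compare against; your self-contained argument---horseshoe plus an explicit triangular factorisation for the ``if'' direction, and the pullback construction $Y\times_Z W \defl W$ for the ``only if'' direction---is sound and is in fact the standard route. The one step worth making explicit is that ``$g\circ h$ a deflation $\Rightarrow$ $g$ a deflation'' genuinely uses (weak) idempotent completeness; you use it implicitly but it is exactly the hypothesis of the proposition.
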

\begin{proof}
We refer the reader to \cite[Proposition 2.10]{en2} for the proof of (1), and \cite[Theorem 2.7]{en2} or the proof of Lemma \ref{grodef} for (2).
\end{proof}

The category $\eff\EE$ enjoys further nice properties, e.g. being abelian, and we refer the interested reader to Appendix A.

If $\EE$ has enough projectives, then one has a simpler description of the category $\eff\EE$. Let $\EE$ be an exact category with enough projectives and denote by $\PP$ the subcategory of $\EE$ consisting of projective objects. We define the \emph{stable category} $\un{\EE}$ by the ideal quotient $\EE/[\PP]$. Then $\eff\EE$ can be naturally identified with the module category of $\un{\EE}$, which we omit the proof.
\begin{lemma}[{\cite[Lemma 2.13]{en2}}]\label{effstable}
Let $\EE$ be an exact category with enough projectives $\PP$. Then the natural restriction functor $\Mod\un{\EE} \to \Mod\EE$ induces an equivalence $\mod\un{\EE} \equi \eff \EE$.
\end{lemma}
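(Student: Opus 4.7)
The plan is to identify both $\mod\un\EE$ and $\eff\EE$ as the same full subcategory of $\Mod\EE$. I first record the standard fact about ideal quotients: the precomposition functor $r \colon \Mod\un\EE \to \Mod\EE$ is fully faithful, and its essential image consists of those $M \in \Mod\EE$ satisfying $M(P) = 0$ for every $P \in \PP$. One direction is immediate; the converse uses that any morphism in $[\PP]$ factors through an object of $\PP$, on which $M$ vanishes, so $M$ kills $[\PP]$ and descends to $\un\EE$. Granting this reduction, it suffices to show that a functor $M$ with $M|_\PP = 0$ is effaceable in $\Mod\EE$ if and only if its lift $N \in \Mod\un\EE$ is finitely presented.

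For the easier direction ($\eff\EE \to \mod\un\EE$), take $M = \coker \EE(-, f) \in \eff\EE$ for a deflation $f \colon Y \defl Z$. Since $\EE(P, -)$ is exact on conflations for projective $P$, we have $M(P) = 0$, so $M \cong r(N)$ for a unique $N \in \Mod\un\EE$. The defining presentation $\EE(-, Y) \to \EE(-, Z) \to M \to 0$ vanishes on $[\PP](-, Y)$ and thus factors through $\un\EE(-, Y) \to \un\EE(-, Z) \to N \to 0$. To see that this descended complex is still exact, the key point is that $[\PP](X, Z) \subseteq \Image \EE(X, f)$ for every $X$: any morphism $X \to Q \to Z$ with $Q$ projective lifts through the deflation $f$ by projectivity of $Q$, hence factors through $f$. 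So $N \in \mod\un\EE$.

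Conversely, given $N \in \mod\un\EE$ with a presentation $\un\EE(-, A) \xrightarrow{\un\EE(-, g)} \un\EE(-, B) \to N \to 0$ arising from some $g \colon A \to B$, I would choose a deflation $\pi \colon P \defl B$ from a projective (available by the assumption of enough projectives) and form $h := (g, \pi) \colon A \oplus P \to B$. I verify $h$ is a deflation by the factorization
\[
A \oplus P \xrightarrow{1_A \oplus \pi} A \oplus B \xrightarrow{(g, 1_B)} B,
\]
in which the first map is a direct sum of deflations and the second becomes the split projection $p_B$ after composing with the automorphism $(a, b) \mapsto (a, b - g(a))$ of $A \oplus B$; compositions of deflations are deflations. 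Then for each $X \in \EE$,
\[
\Image \EE(X, h) = \Image \EE(X, g) + \Image \EE(X, \pi) = \Image \EE(X, g) + [\PP](X, B),
\]
again by the projectivity lifting along $\pi$. Dividing $\EE(X, B)$ by this combined image reproduces $N(X)$, hence $\coker \EE(-, h) \cong r(N)$, which witnesses $r(N) \in \eff\EE$.

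The substantive step is the converse direction, and the single technical point driving both halves is the projectivity observation: any morphism into $B$ factoring through some projective already factors through a fixed deflation $\pi \colon P \defl B$ from a projective. Once this is isolated, the rest is formal bookkeeping with cokernels.
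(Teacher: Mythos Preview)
The paper does not supply a proof of this lemma; it states the result with a citation to \cite[Lemma 2.13]{en2} and the remark ``which we omit the proof.'' Your argument is correct and is the natural direct one: identify $\Mod\un\EE$ via precomposition with the full subcategory of $\Mod\EE$ consisting of functors vanishing on $\PP$, and then match $\eff\EE$ and $\mod\un\EE$ inside it. The two verifications you give are exactly the right ones---that a deflation-presentation descends to a stable presentation because $[\PP](X,Z) \subseteq \Image\EE(X,f)$ for any deflation $f$ onto $Z$, and that a stable presentation $\un\EE(-,A)\to\un\EE(-,B)\to N\to 0$ can be upgraded to a deflation-presentation by adjoining a projective deflation $\pi\colon P \defl B$ to the presenting map. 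Both steps hinge on the same elementary lifting property of projectives along deflations, as you note.
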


\section{General conditions for (a), (b) and (c) to be equivalent}\label{sec3}
In this section, we will provide some general results on the three conditions: (a) $\EE$ is of finite type, (b) $\EE$ is admissible and (c) $\AR(\EE) = \Ex(\EE)$. Here we say that $\EE$ is \emph{admissible} if every object in $\eff\EE$ has finite length in $\Mod\EE$ (\cite{en2}). See Proposition \ref{onaji} for the equivalent condition of the admissibility. Then our results in this section can be summarized in Figure \ref{fig} in the introduction.

To begin with, let us introduce some terminologies on $R$-categories needed in this paper. Let $R$ be a commutative noetherian ring and $\EE$ an $R$-category. We say that an $R$-category $\EE$ is \emph{Hom-noetherian} (resp. \emph{Hom-finite}) if $\EE(X,Y)$ is finitely generated (resp. has finite length) as an $R$-module for each $X,Y$ in $\EE$.

To deal with admissibility, it is convenient to introduce the following criterion for deciding whether the functor has finite length or not.
Let $\EE$ be a Krull-Schmidt category and $M \in \Mod\EE$ a right $\EE$-module. Then the support $\supp M$ of $M$ is defined by
\[
\supp M := \{ X \in \ind\EE \ |\  M(X) \neq 0\}.
\]
The Hom-finiteness ensures that finiteness of modules can be checked only by considering their support:
\begin{lemma}\label{fl}
Let $\EE$ be a Krull-Schmidt category and $M \in \mod\EE$ an $\EE$-module. If $M$ has finite length in $\Mod\EE$, then $\supp M$ is a finite set. The converse holds if $\EE$ is Hom-finite $R$-category.
\end{lemma}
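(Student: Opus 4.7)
The plan is to handle the two directions separately, using the description of simple $\EE$-modules from Proposition \ref{kssimp} as the only nontrivial input.

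For the forward direction, first I would observe that for any indecomposable $Z \in \ind\EE$ the simple module $S_Z = \EE(-,Z)/\JJ(-,Z)$ has support exactly $\{Z\}$: if $Y \in \ind\EE$ is not isomorphic to $Z$ then any morphism $Y \to Z$ lies in $\JJ$ (since $\End(Y)$ and $\End(Z)$ are local), so $S_Z(Y)=0$, whereas $S_Z(Z) = \End(Z)/\JJ(Z,Z) \neq 0$. Next, since evaluation at an indecomposable $X$ is an exact functor $\Mod\EE \to \Ab$, any short exact sequence $0 \to M' \to M \to M'' \to 0$ yields $\supp M \subseteq \supp M' \cup \supp M''$. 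Now take a composition series $0 = M_0 \subset M_1 \subset \cdots \subset M_n = M$ with simple factors $S_{Z_i}$, and induct on $n$ to conclude $\supp M \subseteq \{Z_1,\ldots,Z_n\}$, a finite set.

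For the converse, assume $\EE$ is Hom-finite and $\supp M$ is a finite set $\{X_1,\ldots,X_k\}$. Since $M$ is finitely presented, there is a presentation $\EE(-,X) \to \EE(-,Y) \to M \to 0$, so evaluating at any $Z \in \EE$ expresses $M(Z)$ as a quotient of $\EE(Z,Y)$; by Hom-finiteness $\EE(Z,Y)$ has finite length as an $R$-module, and hence so does $M(Z)$. Define
\[
\ell(M) := \sum_{i=1}^{k} \ell_R\bigl(M(X_i)\bigr),
\]
which is a finite nonnegative integer. The key observation is that if $N \subsetneq M$ is a proper submodule of $\EE$-modules, then $\ell(N) < \ell(M)$: for $N \neq M$ there exists some object $X$ with $N(X) \subsetneq M(X)$, and writing $X$ as a finite direct sum of indecomposables using the Krull-Schmidt property reduces this to a strict inclusion $N(X_i) \subsetneq M(X_i)$ of $R$-modules of finite length, which strictly decreases $\ell_R$.

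The main (and only) obstacle is to make sure this strictness argument really lives inside the finite support $\{X_1,\ldots,X_k\}$: for indecomposables $Y \notin \supp M$ we automatically have $N(Y) = 0 = M(Y)$, so the witnessing $X_i$ must lie in $\supp M$. Granting this, any strict chain $0 = N_0 \subsetneq N_1 \subsetneq \cdots \subsetneq N_n = M$ yields $0 \leq \ell(N_0) < \ell(N_1) < \cdots < \ell(N_n) = \ell(M)$, so $n \leq \ell(M)$. This bounds all composition series and forces $M$ to have finite length in $\Mod\EE$, completing the proof.
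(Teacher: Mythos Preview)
Your proof is correct. The paper itself does not give an argument here; it simply refers to \cite[Theorem~2.12]{artin2}, so your write-up actually supplies more than the paper does. Both directions are handled cleanly: the forward direction via the composition factors $S_{Z_i}$ and the observation $\supp S_Z = \{Z\}$, and the converse via the length function $\ell(M) = \sum_i \ell_R(M(X_i))$ bounding the length of any strict chain in $\Mod\EE$. The one point worth stating slightly more explicitly is that in the converse your intermediate submodules $N_j$ need not lie in $\mod\EE$, but this causes no trouble since $\ell(N_j)$ still makes sense (each $N_j(X_i)$ is an $R$-submodule of the finite-length $R$-module $M(X_i)$). Otherwise the argument is complete and is essentially the standard one underlying the cited reference.
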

\begin{proof}
See, for example, \cite[Theorem 2.12]{artin2}.
\end{proof}

\subsection{(a) Representation-finiteness and (b) admissibility}\label{secab}
First, we consider the relation between: (a) $\EE$ is of finite type, and (b) $\EE$ is admissible.

\begin{definition}
Let $\CC$ be an additive category. We say that an object $C \in \CC$ is a \emph{weak cogenerator} if $\CC(X,C)=0$ implies that $X$ is a zero object for every $X \in \CC$.
\end{definition}
If we assume the existence of a weak cogenerator of the stable category, then (a) is almost equivalent to (b).
\begin{proposition}\label{propab}
Let $\EE$ be a Krull-Schmidt exact category with a projective generator. Consider the following conditions:
\begin{enumerate}
\item $\EE$ is admissible and $\un{\EE}$ has a weak cogenerator.
\item $\EE$ is of finite type.
\end{enumerate}
Then {\upshape (1)} $\Rightarrow$ {\upshape (2)} holds. The converse holds if $\un{\EE}$ is a Hom-finite $R$-category for a commutative noetherian ring $R$.
\end{proposition}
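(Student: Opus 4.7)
The plan is to translate admissibility, via Lemma \ref{effstable}, into a statement about $\mod\un{\EE}$, and then combine with Lemma \ref{fl}, which characterises finite length in terms of finiteness of support. Along the way, I will use the standard fact that $\ind \EE$ decomposes as $\ind \un{\EE}$ together with the (finitely many) indecomposable direct summands of the projective generator $P$; in particular, $\EE$ is of finite type if and only if $\un{\EE}$ is.

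For the direction (1) $\Rightarrow$ (2), I would let $C$ be a weak cogenerator of $\un{\EE}$ and consider the representable functor $\un{\EE}(-,C) \in \mod\un{\EE}$. Under the equivalence $\mod\un{\EE} \equi \eff\EE$ of Lemma \ref{effstable}, this corresponds to an effaceable functor, hence by admissibility has finite length in $\Mod\EE$. A bookkeeping lemma (see below) will identify this with finite length in $\Mod\un{\EE}$, so Lemma \ref{fl} forces $\supp \un{\EE}(-,C)$ to be a finite subset of $\ind \un{\EE}$. The weak cogenerator hypothesis says precisely that $\un{\EE}(X,C) \neq 0$ for every nonzero $X \in \un{\EE}$, so this support equals all of $\ind \un{\EE}$. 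Hence $\ind \un{\EE}$, and therefore $\ind \EE$, is finite.

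For the converse, suppose $\EE = \add G$; then $\un{\EE} = \add \un{G}$. First, $\un{G}$ is a weak cogenerator: any nonzero $X \in \un{\EE}$ is a direct summand of $\un{G}^n$ for some $n$, and composing the split inclusion $X \hookrightarrow \un{G}^n$ with at least one projection $\un{G}^n \to \un{G}$ gives a nonzero map $X \to \un{G}$. Next, for admissibility, any $M \in \eff\EE \equi \mod\un{\EE}$ has $\supp M \subseteq \ind \un{\EE}$, which is finite because $\un{\EE} = \add \un{G}$. Since $\un{\EE}$ is Hom-finite, Lemma \ref{fl} yields that $M$ has finite length in $\Mod\un{\EE}$, and hence, by the identification below, in $\Mod\EE$.

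The one subtle bookkeeping point used in both directions is that for $M \in \eff\EE$, finite length in $\Mod\EE$ coincides with finite length in $\Mod\un{\EE}$. This is because such $M$ vanishes on projectives, so any sub-$\EE$-module of $M$ also vanishes on projectives and therefore factors uniquely through $\un{\EE}$; hence sub-$\EE$-modules and sub-$\un{\EE}$-modules of $M$ agree, and composition series in the two categories match. I expect this identification to be the main technical step; the conceptual heart of the proposition is instead the one-line application of admissibility to the representable functor at a weak cogenerator.
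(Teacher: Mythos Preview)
Your argument is correct and follows essentially the same route as the paper: identify $\eff\EE$ with $\mod\un{\EE}$ via Lemma \ref{effstable}, apply admissibility to the representable functor at a weak cogenerator, and use Lemma \ref{fl} together with $\ind\EE = \ind\un{\EE} \sqcup \ind(\add P)$. Your explicit ``bookkeeping'' paragraph, showing that finite length in $\Mod\EE$ and in $\Mod\un{\EE}$ agree for modules vanishing on projectives, is a point the paper uses tacitly (especially in the converse direction, where Lemma \ref{fl} is applied to the Hom-finite category $\un{\EE}$ to conclude finite length, while admissibility is phrased in $\Mod\EE$); making it explicit is a virtue, not a deviation.
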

\begin{proof}
(1) $\Rightarrow$ (2):
Since $\EE$ has enough projectives, we may identify $\eff\EE$ with $\mod\un{\EE}$ by Lemma \ref{effstable}. Let $\un{X}$ be a weak cogenerator of $\un{\EE}$. By the assumption, we have that $\un{\EE}(-,\un{X})$ has finite length. This implies that $\supp \un{\EE}(-,\un{X})$ is finite by Lemma \ref{fl}. Since every indecomposable object $\un{W}$ in $\un{\EE}$ should satisfy  $\un{\EE}(\un{W},\un{X})\neq 0$, it follows that $\un{\EE}$ is of finite type. Now $\EE$ itself is of finite type because we have a natural identification $\ind\EE =\ind\un{\EE} \sqcup \ind (\add P)$.

(2) $\Rightarrow$ (1):
Suppose that $\EE$ is of finite type and $\un{\EE}$ is Hom-finite over $R$. Then obviously the direct sum of all indecomposables in $\un{\EE}$ is a weak cogenerator of $\un{\EE}$. Moreover, since $\un{\EE}$ is Hom-finite and $\ind\un{\EE}$ is finite, every object in $\eff\EE = \mod\un{\EE}$ has finite length by Lemma \ref{fl}.
\end{proof}

\subsection{(b) Admissibility and (c) AR=Ex}\label{secbc}

Now let us investigate the relation between the condition (b) and (c).
First we show that if (b) $\EE$ is admissible, then (c) $\AR(\EE) =\Ex(\EE)$ holds. Next we will introduce the \emph{condition (CF)} and show that the converse implication holds under it.

\begin{proposition}\label{part1}
Let $\EE$ be a Krull-Schmidt exact category. Suppose that $\EE$ is admissible. Then $\Ex(\EE) = \AR(\EE)$ holds.
\end{proposition}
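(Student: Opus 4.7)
The strategy is to route the argument through the functor category by defining, for each $M \in \eff\EE$, an element $\delta(M) \in \KKK_0(\EE,0)$ represented by any conflation $X \rightarrowtail Y \twoheadrightarrow Z$ whose associated cokernel functor is $M$ (which exists by Proposition \ref{serre}(2)), and then proving $\delta(M) \in \AR(\EE)$ for every $M$ in $\eff\EE$ of finite length. Since $\EE$ is admissible, every $M \in \eff\EE$ has finite length in $\Mod\EE$, and since every generator $[X]-[Y]+[Z]$ of $\Ex(\EE)$ equals $\delta(M)$ for the cokernel $M$ of its Yoneda image, this yields $\Ex(\EE) \subseteq \AR(\EE)$; the reverse inclusion is immediate.

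For well-definedness of $\delta$, I would take two conflations $X_i \rightarrowtail Y_i \twoheadrightarrow Z_i$ ($i=1,2$) both yielding the same $M \in \eff\EE$. Applying Yoneda gives two length-$2$ projective resolutions
\[
0 \to \EE(-,X_i) \to \EE(-,Y_i) \to \EE(-,Z_i) \to M \to 0
\]
in $\Mod\EE$. The generalized Schanuel lemma (iterated twice) then produces an isomorphism $\EE(-,X_1 \oplus Y_2 \oplus Z_1) \cong \EE(-,X_2 \oplus Y_1 \oplus Z_2)$, which descends via fully faithfulness of Yoneda to $X_1 \oplus Y_2 \oplus Z_1 \cong X_2 \oplus Y_1 \oplus Z_2$ in $\EE$. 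Since $\KKK_0(\EE,0)$ is free on indecomposables, Krull--Schmidt gives $[X_1]-[Y_1]+[Z_1] = [X_2]-[Y_2]+[Z_2]$.

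Next I would establish additivity of $\delta$ on short exact sequences in $\eff\EE$. For $0 \to M_1 \to M \to M_2 \to 0$ with all three in $\eff\EE$, choose representing conflations $X_i \rightarrowtail Y_i \twoheadrightarrow Z_i$ for $M_i$; the horseshoe lemma applied to their Yoneda resolutions produces an exact sequence
\[
0 \to \EE(-, X_1 \oplus X_2) \to \EE(-, Y_1 \oplus Y_2) \to \EE(-, Z_1 \oplus Z_2) \to M \to 0.
\]
The middle map is, by Yoneda, induced by a morphism $Y_1 \oplus Y_2 \to Z_1 \oplus Z_2$ in $\EE$, and the ``if'' direction of Proposition \ref{serre}(2), applied using $M \in \eff\EE$, identifies this morphism as a deflation with kernel $X_1 \oplus X_2$ fitting into a conflation. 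This is a representing conflation for $M$, so $\delta(M) = [X_1 \oplus X_2] - [Y_1 \oplus Y_2] + [Z_1 \oplus Z_2] = \delta(M_1) + \delta(M_2)$.

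With the mechanism in place, I would induct on the length of $M$ in $\Mod\EE$. The base case $M=0$ is witnessed by a split conflation, giving $\delta(0)=0$. For the inductive step, any simple submodule $S$ of $M \in \eff\EE$ is finitely generated, so Proposition \ref{serre}(1) places $S$ and $M/S$ in $\eff\EE$; by Proposition \ref{kssimp}, $S \cong S_W$ for some indecomposable $W$, and Proposition \ref{exsimp} combined with the simple case of the correspondence ensures that an AR conflation ending at $W$ represents $S_W$, so $\delta(S_W) \in \AR(\EE)$. Additivity and induction on $M/S$ then give $\delta(M) \in \AR(\EE)$, completing the proof. The main technical obstacle will be the additivity step: one must verify that the horseshoe construction in $\Mod\EE$ yields a representable three-term complex in $\EE$ that is genuinely a conflation, rather than merely a complex with the correct cokernel functor; it is precisely the converse part of Proposition \ref{serre}(2) together with $M \in \eff\EE$ that supplies this.
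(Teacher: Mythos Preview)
Your proposal is correct and follows essentially the same approach as the paper: both arguments define the element $[X]-[Y]+[Z]$ attached to $M\in\eff\EE$, establish its independence of the chosen conflation via Schanuel's lemma and Yoneda, and then induct on the length of $M$, handling the simple case through Proposition~\ref{exsimp} and the inductive step through the horseshoe lemma together with Proposition~\ref{serre}. The only cosmetic differences are that you package the assignment as a map $\delta$ and start the induction at $M=0$, whereas the paper phrases it as an existence claim and starts at $l(M)=1$; the paper likewise flags that the horseshoe output is a genuine conflation by Proposition~\ref{serre}(2), exactly as you anticipated.
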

\begin{proof}
This is proved in the similar way as \cite[Theorem 3.17]{en2}. We provide a proof here for the convenience of the reader.

Let $0 \to X_1 \xrightarrow{f_1} Y_1 \xrightarrow{g_1} Z_1 \to 0$ be a conflation in $\EE$ and we have to show $[X_1] - [Y_1] + [Z_1] \in \AR(\EE)$.
Put $M:= \coker \EE(-,g_1)$ in $\Mod\EE$, then $M$ is in $\eff\EE$ by the definition.
Suppose that there exists another conflation $0 \to X_2 \xrightarrow{f_2} Y_2 \xrightarrow{g_2} Z_2 \to 0$ in $\EE$ such that $M \iso \coker \EE(-,g_2)$. Then we have exact sequences
\[
0 \to {\EE(-,X_i)} \xrightarrow{\EE(-,f_i)} \EE(-,Y_i) \xrightarrow{\EE(-,g_i)} \EE(-,Z_i) \to M \to 0
\]
in $\Mod\EE$ for $i=1,2$. Thus Schanuel's lemma shows that $\EE(-,X_1) \oplus \EE(-,Y_2) \oplus \EE(-,Z_1) \iso \EE(-,X_2) \oplus \EE(-,Y_1) \oplus \EE(-,Z_2)$ in $\Mod\EE$, hence $X_1 \oplus Y_2 \oplus Z_1 \iso X_2 \oplus Y_1 \oplus Z_2$
 in $\EE$ by the Yoneda lemma. This implies that $[X_1] - [Y_1] + [Z_1] = [X_2] - [Y_2] + [Z_2]$ in $\KKK_0(\EE,0)$. Thus it suffices to show the following claim.

\emph{Claim: For any $M \in \eff \EE$, there exists at least one exact sequence in $\Mod\EE$
\[
0 \to \EE(-,X) \xrightarrow{\EE(-,f)} \EE(-,Y) \xrightarrow{\EE(-,g)} \EE(-,Z) \to M \to 0
\] with $[X]-[Y]+[Z] \in \AR(\EE)$.} Note that such a complex $0 \to X \to Y \to Z \to 0$ is automatically a conflation by Proposition \ref{serre}(2).

We will show this claim by induction on $l(M)$, the length of $M$ as an $\EE$-module.
Suppose that $l(M) = 1$, that is, $M$ is a simple $\EE$-module. We may assume that $M=S_Z$ for an indecomposable object $Z \in \EE$ by Proposition \ref{kssimp}, and there exists an AR conflation $0 \to X \xrightarrow{f} Y \xrightarrow{g} Z \to 0$ in $\EE$ by Proposition \ref{exsimp}. This gives the desired projective resolution $0 \to \EE(-,X) \xrightarrow{\EE(-,f)} \EE(-,Y) \xrightarrow{\EE(-,g)} \EE(-,Z) \to S_Z \to 0$ of $S_Z$, and $[X] - [Y] + [Z] \in \AR(\EE)$ holds.

Now suppose that $l(M) > 1$. Take a simple $\EE$-submodule $M_1$ of $M$. Then we have an exact sequence $0 \to M_1 \to M \to M_2 \to 0$ in $\Mod\EE$. Since $M_1$ is finitely generated, Proposition \ref{serre}(1) shows that $M_1$ and $M_2$ are both in $\eff\EE$. By induction hypothesis, we have an exact sequence $0 \to \EE(-,X_i) \xrightarrow{\EE(-,f_i)} \EE(-,Y_i) \xrightarrow{\EE(-,g_i)} \EE(-,Z_i) \to M_i \to 0$ with $[X_i] -[Y_i] + [Z_i] \in \AR(\EE)$ for each $i=1,2$.
By the horseshoe lemma, we obtain a projective resolution
\[
0 \to \EE(-,X_1 \oplus X_2) \xrightarrow{\EE(-,f)} \EE(-,Y_1\oplus Y_2) \xrightarrow{\EE(-,g)} \EE(-,Z_1 \oplus Z_2) \to M \to 0.
\]
Then we have
\[
[X_1\oplus X_2 ] -[Y_1\oplus Y_2] + [Z_1 \oplus Z_2]
=\sum_{i=1,2}([X_i] -[Y_i] + [Z_i]) \in \AR(\EE).
\]
Thus the claim follows.
\end{proof}

Next let us investigate when the converse of Proposition \ref{part1} holds. The author does not know whether this always holds, but we show that this is the case for particular cases. To this purpose, let us introduce the condition \emph{(CF)}, which stands for the \emph{Conservation of Finiteness}.

\begin{definition}\label{fcdef}
For an exact category $\EE$, we say that $\EE$ satisfies \emph{(CF)} if it satisfies the following:
\begin{itemize}
\item[(CF)] Let $0 \to X \xrightarrow{f_i} Y \xrightarrow{g_i} Z \to 0$ be two conflations in $\EE$ for $i=1,2$ and put $M_i := \coker \EE(-,g_i) \in \eff\EE$. If $M_1$ has finite length in $\Mod\EE$, then so does $M_2$.
\end{itemize}
\end{definition}
For later purposes, the following equivalent condition is more useful.
\begin{lemma}
For an exact category $\EE$, the condition {\upshape (CF)} is equivalent to the following condition.
\begin{itemize}
\item[\upshape (CF$'$)] Let $0 \to X_i \xrightarrow{f_i} Y_i \xrightarrow{g_i} Z_i \to 0$ be conflations in $\EE$ for $i=1,2$ and put $M_i := \coker \EE(-,g_i)$ in $\eff\EE$.
Suppose that $[X_1]-[Y_1]+[Z_1]=[X_2]-[Y_2]+[Z_2]$ holds in $\KKK_0(\EE,0)$. If $M_1$ has finite length in $\Mod\EE$, then so does $M_2$.
\end{itemize}
\end{lemma}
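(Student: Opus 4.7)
The direction (CF$'$) $\Rightarrow$ (CF) is immediate: in (CF) the two conflations share the same triple of objects, so the classes $[X]-[Y]+[Z]$ agree in $\KKK_0(\EE,0)$ tautologically, and (CF) becomes a special case of (CF$'$). The substance lies in (CF) $\Rightarrow$ (CF$'$), and my plan is to reduce it to (CF) by stabilising each given conflation with a carefully chosen split conflation, so that afterwards the outer terms literally match and the middle terms match up to a Krull-Schmidt isomorphism.

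Given conflations $\alpha_i : 0 \to X_i \xrightarrow{f_i} Y_i \xrightarrow{g_i} Z_i \to 0$ for $i=1,2$ satisfying $[X_1]-[Y_1]+[Z_1]=[X_2]-[Y_2]+[Z_2]$ in the free abelian group $\KKK_0(\EE,0) = \bigoplus_{[X]\in\ind\EE} \Z\cdot [X]$, the equality of these sums of basis elements yields, by Krull-Schmidt, an isomorphism
\[
\varphi : Y_1 \oplus X_2 \oplus Z_2 \xrightarrow{\sim} Y_2 \oplus X_1 \oplus Z_1
\]
in $\EE$. I then form the two conflations
\[
\beta_i := \alpha_i \oplus \bigl(0 \to X_{3-i} \to X_{3-i} \oplus Z_{3-i} \to Z_{3-i} \to 0\bigr),
\]
each a direct sum of $\alpha_i$ with a split conflation. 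By design $\beta_1$ and $\beta_2$ share the outer terms $X_1 \oplus X_2$ and $Z_1 \oplus Z_2$, while their middle terms are the source and target of $\varphi$. Moreover, since the Yoneda functor sends a split conflation to a split short exact sequence in $\Mod\EE$ whose cokernel vanishes, the cokernel $\EE$-module attached to $\beta_i$ is isomorphic to $M_i$, and hence has finite length iff $M_i$ does.

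To finish, I conjugate $\beta_2$ by $\varphi$, producing a conflation $\beta_2'$ with the same middle term $Y_1 \oplus X_2 \oplus Z_2$ as $\beta_1$ and whose cokernel functor is still isomorphic to $M_2$. Then $\beta_1$ and $\beta_2'$ are two conflations over literally the same triple of objects, so the hypothesis (CF) applies and transports finite length from $M_1$ to $M_2$; this is precisely the conclusion of (CF$'$). The only non-routine point is the combinatorial choice of pairing $\alpha_i$ with the split conflation built on the \emph{other} pair $X_{3-i}, Z_{3-i}$, which is what aligns the outer terms; everything else (direct sums of conflations are conflations, isomorphic conflations have isomorphic cokernel functors, split conflations contribute trivially to the cokernel) is standard bookkeeping in exact categories, and I expect no serious obstacle there.
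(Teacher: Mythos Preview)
Your proposal is correct and follows essentially the same approach as the paper: form $\beta_i=\alpha_i\oplus(\text{split conflation on }X_{3-i},Z_{3-i})$ so that the outer terms match, use the Krull--Schmidt isomorphism of the middle terms, and apply (CF). If anything, you are slightly more explicit than the paper in spelling out the conjugation by $\varphi$ needed to make the middle terms literally equal before invoking (CF).
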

\begin{proof}
Obviously it suffices to show that (CF) implies (CF$'$). Since $[X_1]-[Y_1]+[Z_1]=[X_2]-[Y_2]+[Z_2]$ holds in $\KKK_0(\EE,0)$, we have that $X_2 \oplus Y_1 \oplus Z_2 \iso X_1 \oplus Y_2 \oplus Z_1$ holds in $\EE$ by the Krull-Schmidtness of $\EE$. Now consider the following two complexes:
\begin{align*}
0 \to X_1 \oplus X_2 \xrightarrow{
\begin{sbmatrix} 0 & 1 \\ f_1 & 0 \\ 0 & 0 \end{sbmatrix}
} X_2 \oplus Y_1 \oplus Z_2 \xrightarrow{
\begin{sbmatrix}
0 & g_1 & 0 \\
0 & 0 & 1
\end{sbmatrix}
} Z_1 \oplus Z_2 \to 0,\\
0 \to X_1 \oplus X_2 \xrightarrow{
\begin{sbmatrix} 1 & 0 \\ 0 & f_2 \\ 0 & 0 \end{sbmatrix}
} X_1 \oplus Y_2 \oplus Z_1 \xrightarrow{
\begin{sbmatrix}
0 & 0 & 1 \\
0 & g_2 & 0
\end{sbmatrix}
} Z_1 \oplus Z_2 \to 0.
\end{align*}
These are conflations since they are direct sums of conflations and split exact sequences. Applying (CF) to them, it is easy to check that (CF$'$) holds.
\end{proof}

Now we can state the following main result in this section.
\begin{theorem}\label{fcfl}
Let $\EE$ be a Krull-Schmidt exact category. Then the following are equivalent.
\begin{enumerate}
\item $\EE$ is admissible.
\item $\Ex(\EE) = \AR(\EE)$ holds and $\EE$ satisfies {\upshape (CF)}.
\item $\Ex(\EE)\otimes_\Z \Q = \AR(\EE) \otimes_\Z \Q$ holds and $\EE$ satisfies {\upshape (CF)}.
\end{enumerate}
\end{theorem}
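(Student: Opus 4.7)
The implication (1)$\Rightarrow$(2) is essentially free. Proposition \ref{part1} already gives $\Ex(\EE)=\AR(\EE)$ from admissibility, while (CF) is vacuous under (1): every object of $\eff\EE$ has finite length, so both $M_1$ and $M_2$ in the statement of (CF) automatically have finite length. The implication (2)$\Rightarrow$(3) is immediate since $\Ex(\EE)=\AR(\EE)$ gives equality after tensoring with $\Q$.

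The only substantive direction is (3)$\Rightarrow$(1). The plan is to show that every $M\in\eff\EE$ has finite length by reducing to direct sums of simples via (CF$'$), using that AR conflations correspond under Yoneda to length-one modules in $\eff\EE$ (Proposition \ref{exsimp}). Fix a deflation $g\colon Y\defl Z$ with cokernel $M$ and an inflation $f\colon X\infl Y$ making a conflation. By (3), the class $[X]-[Y]+[Z]\in\Ex(\EE)$ lies in $\AR(\EE)\otimes_{\Z}\Q$, so after multiplying by a suitable positive integer $N$ we obtain an equality
\begin{equation*}
N([X]-[Y]+[Z])=\sum_{j\in J}n_{j}\bigl([A_{j}]-[B_{j}]+[C_{j}]\bigr)
\end{equation*}
in $\KKK_{0}(\EE,0)$, where each $A_{j}\infl B_{j}\defl C_{j}$ is an AR conflation and $n_{j}\in\Z$.

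Next I split $J=J^{+}\sqcup J^{-}$ according to the sign of $n_{j}$, set $m_{j}:=|n_{j}|$, and rewrite the equation as
\begin{equation*}
N([X]-[Y]+[Z])+\sum_{j\in J^{-}}m_{j}([A_{j}]-[B_{j}]+[C_{j}])=\sum_{j\in J^{+}}m_{j}([A_{j}]-[B_{j}]+[C_{j}]).
\end{equation*}
Both sides are realized by genuine conflations obtained as direct sums of the corresponding building blocks (direct sums of conflations are conflations, and Hom commutes with direct sums, so cokernels of the Yoneda-ed deflations add as well). The cokernel of the right-hand conflation is $\bigoplus_{j\in J^{+}}S_{C_{j}}^{\oplus m_{j}}$, which has finite length. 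Since the two conflations have the same class in $\KKK_{0}(\EE,0)$, (CF$'$) forces the cokernel of the left-hand conflation, namely $M^{\oplus N}\oplus\bigoplus_{j\in J^{-}}S_{C_{j}}^{\oplus m_{j}}$, to have finite length as well; hence $M$ itself has finite length (length is additive on direct sums, so a summand of a finite-length module is finite-length).

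The main obstacle is just the bookkeeping around clearing denominators and splitting signs in the relation in $\KKK_{0}(\EE,0)$; the reason we must work with (CF$'$) rather than the original formulation (CF) is precisely that the two conflations being compared here have genuinely different outer terms, and it is the equivalent reformulation of (CF) as (CF$'$) that makes them comparable. Everything else is an application of tools already in place: Proposition \ref{exsimp} identifies the AR building blocks with simple modules in $\eff\EE$, Proposition \ref{serre}(2) ensures the assembled complexes really are conflations, and Proposition \ref{serre}(1) (used implicitly) guarantees that finite length behaves well under the operations performed.
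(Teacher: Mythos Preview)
Your proposal is correct and follows essentially the same route as the paper's own proof: both clear denominators to pass from $\AR(\EE)\otimes_\Z\Q$ to an integral relation, split the AR summands by sign, move the negative part to the left, and then apply (CF$'$) to transfer finite length from the semisimple right-hand cokernel to $M^{\oplus N}\oplus(\text{simples})$. The only cosmetic difference is that the paper does not invoke Proposition~\ref{serre}(2) for the assembled complexes (direct sums of conflations are conflations outright), but this does not affect the argument.
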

\begin{proof}
(1) $\Rightarrow$ (2):
If every object in $\eff\EE$ has finite length, then we have $\Ex(\EE) = \AR(\EE)$ by Proposition \ref{part1}, and the condition (CF) is trivially satisfied.

(2) $\Rightarrow$ (3): Obvious.

(3) $\Rightarrow$ (1):
Let $M$ be in $\eff\EE$ and take a conflation $0 \to X \xrightarrow{f} Y \xrightarrow{g} Z \to 0$ in $\EE$ such that $0 \to \EE(-,X) \xrightarrow{\EE(-,f)} \EE(-,Y) \xrightarrow{\EE(-,g)} \EE(-,Z) \to M \to 0$ is exact.
By $\AR(\EE) \otimes_\Z \Q = \Ex(\EE) \otimes_\Z \Q$, there exist a positive integer $a$, integer $b_i$ and an AR conflation $0 \to X_i \to Y_i \xrightarrow{g_i} Z_i \to 0$ for each $i$ such that we have an equality in $\KKK_0(\EE,0)$:
\[
a([X]-[Y]+[Z]) = \sum_{i=1}^n b_i ([X_i] - [Y_i] + [Z_i]).
\]
We may assume that $b_i > 0$ for $i\leq m$ and $-c_i:= b_i <0$ for $i > m$. Thus we have an equality
\begin{equation}\label{eq}
a([X]-[Y]+[Z]) + \sum_{i=m+1}^n c_i([X_i]-[Y_i]+[Z_i]) = \sum_{i=1}^m b_i ([X_i] - [Y_i] + [Z_i])
\end{equation}
in $\KKK_0(\EE,0)$ such that all the coefficients are positive integers. Thus each side of the equation (\ref{eq}) comes from the single conflation in $\EE$.
Put $S_i:= \coker \EE(-,g_i) \in \mod\EE$ for each $i$, which is simple $\EE$-module contained in $\eff\EE$. Then the left hand side in (\ref{eq}) corresponds to $M^a \oplus \bigoplus_{i=m+1}^n S_i^{c_i}$ in $\eff\EE$ and the right hand side corresponds to $\bigoplus_{i=1}^m S_i^{b_i}$ in $\eff\EE$.
Since the latter has finite length, the condition (CF$'$) implies that so does the former. Thus $M$ has finite length.
\end{proof}

\section{Applications}\label{sec4}
In Section \ref{sec3}, we have shown the equivalence of (a), (b) and (c) under three technical assumptions: Hom-finiteness of $\un{\EE}$, the existence of a weak cogenerator of $\un{\EE}$, and the condition (CF) (see Figure \ref{fig} for a summary of these results). In this section, we show that these assumptions are satisfied for some concrete cases.

Main applications we have in mind are the representation theory of noetherian algebras, where the base ring is not necessarily artinian. For the convenience of the reader, we recall some related definitions.
\begin{definition}
Let $R$ be a noetherian local ring and $\Lambda$ an $R$-algebra.
\begin{enumerate}
\item $\Lambda$ is called a \emph{noetherian $R$-algebra} if $\Lambda$ is finitely generated as an $R$-module. If in addition $R$ is artinian, then we say that $\Lambda$ is an \emph{artin $R$-algebra}.
\item Suppose that $R$ is Cohen-Macaulay. For a noetherian $R$-algebra, we denote by $\CM\Lambda$ the category of finitely generated $\Lambda$-modules which are maximal Cohen-Macaulay as $R$-modules.
\item A noetherian $R$-algebra $\Lambda$ is called an \emph{$R$-order} if $\Lambda \in \CM\Lambda$ holds.
\item We say that an $R$-order $\Lambda$ \emph{has at most an isolated singularity} if $\gl \Lambda_p = \height p$ holds for every non-maximal prime $p \in \Spec R$.
\item An $R$-order is called a \emph{Gorenstein} order if $\CM\Lambda$ is a Frobenius exact category.
\end{enumerate}
\end{definition}
Let $R$ be a complete Cohen-Macaulay local ring and $\Lambda$ an $R$-order. Then $\CM\Lambda$ is closed under extensions in $\mod\Lambda$, thus is an exact category with a projective generator $\Lambda$. Moreover, $\CM\Lambda$ has AR conflations if and only if $\Lambda$ has at most an isolated singularity \cite{isolated}.

\subsection{Hom-finiteness of the stable category}
First we consider the Hom-finiteness of $\un{\EE}$. Recall that (a) $\Rightarrow$ (b) holds if $\un{\EE}$ is Hom-finite (Proposition \ref{propab}).
This condition is trivial if $R$ is artinian, hence we obtain the following.
\begin{corollary}
Let $\EE$ be a Hom-finite idempotent complete exact $R$-category over a commutative artinian ring $R$. Suppose that $\EE$ is of finite type. Then $\EE$ is admissible and $\AR(\EE) = \Ex(\EE)$ holds.
\end{corollary}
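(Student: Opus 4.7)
The plan is to reduce the statement to a direct application of Proposition~\ref{part1}, with the main work being to verify admissibility via the support criterion of Lemma~\ref{fl}. Note that, unlike Proposition~\ref{propab}, the corollary does not assume the existence of a projective generator, so the argument should not pass through the stable category $\un{\EE}$.

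First I would check that $\EE$ is Krull-Schmidt. Since $R$ is commutative artinian and $\EE(X,X)$ has finite length over $R$, each endomorphism ring is an artinian (hence semiperfect) $R$-algebra. Combined with the hypothesis that $\EE$ is idempotent complete, this gives the Krull-Schmidt property, which is what is needed to invoke Lemma~\ref{fl} and Proposition~\ref{part1}.

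Next, let $M \in \eff\EE$. Since $M$ lies in $\mod\EE$ and $\EE$ is of finite type, $\supp M$ is contained in the finite set $\ind\EE$, so $\supp M$ is finite. By Lemma~\ref{fl} and the Hom-finiteness of $\EE$ over $R$, this forces $M$ to have finite length in $\Mod\EE$. As $M \in \eff\EE$ was arbitrary, $\EE$ is admissible by definition, which is condition (b) in the summary of Section~\ref{sec3}.

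Finally, admissibility of $\EE$ together with Proposition~\ref{part1} (which requires only that $\EE$ be Krull-Schmidt and exact) yields $\AR(\EE) = \Ex(\EE)$, completing the proof. There is no real obstacle here; the only mildly delicate point is that the corollary is not an immediate instance of Proposition~\ref{propab} because a projective generator is not assumed, so one must argue admissibility directly from the support criterion rather than through an identification $\eff\EE \equi \mod\un{\EE}$.
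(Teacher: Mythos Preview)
Your argument is correct and, in substance, very close to the paper's: both routes boil down to observing that when $\EE$ is Hom-finite and of finite type, every finitely presented module has finite support, hence finite length by Lemma~\ref{fl}, and then invoking Proposition~\ref{part1}. The paper's one-line justification is that Hom-finiteness of $\un{\EE}$ is automatic over an artinian base, pointing back to Proposition~\ref{propab}; you instead bypass Proposition~\ref{propab} and apply Lemma~\ref{fl} directly to $\eff\EE\subset\mod\EE$. Your observation that the corollary does not assume a projective generator while Proposition~\ref{propab} does is well taken: the paper's phrasing implicitly leans on Proposition~\ref{propab}, whose proof of the relevant direction passes through the identification $\eff\EE\simeq\mod\un{\EE}$, whereas your direct argument needs nothing about projectives. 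So your route is marginally cleaner and matches the stated hypotheses more tightly; the paper's route is shorter to state but strictly speaking requires either adding the projective generator hypothesis or unpacking the proof of Proposition~\ref{propab} exactly as you have done.
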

For an artin algebra $\Lambda$, we can apply this result to any subcategory $\EE$ of $\mod\Lambda$ which is closed under extensions and direct summands.

If $R$ is not artinian, we can use the following fact on Hom-finiteness.
\begin{lemma}[{\cite[Proposition A.5]{en2}}]\label{homfin}
Let $R$ be a complete noetherian local ring and $\EE$ a Hom-noetherian idempotent complete exact $R$-category. Suppose that $\EE$ has enough projectives and consider the following conditions:
\begin{enumerate}
\item $\EE$ is of finite type.
\item $\EE$ has AR conflations.
\item $\un{\EE}$ is Hom-finite over $R$.
\end{enumerate}
Then the implication {\upshape (1)} $\Rightarrow$ {\upshape (2)} $\Rightarrow$ {\upshape (3)} holds.
\end{lemma}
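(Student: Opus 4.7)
The plan is to handle each implication with a separate functorial argument, leveraging the tools recalled in Section \ref{sec2}.

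For (1) $\Rightarrow$ (2), since $\EE$ is of finite type we may write $\EE = \add G$ for some $G \in \EE$. The Yoneda functor then gives an equivalence $\Mod\EE \simeq \Mod\End(G)^{\op}$, and the Hom-noetherian hypothesis ensures that $\End(G)$ is a module-finite, and hence noetherian, $R$-algebra. Every finitely generated $\EE$-module is then finitely presented; in particular, each simple top $S_Z$ (for $Z$ indecomposable in $\EE$) lies in $\mod\EE$. By Proposition \ref{exsimp}, this yields a right AR conflation ending at each non-projective indecomposable $Z$. To obtain left AR conflations, I would invoke Matlis duality with respect to the injective hull of the residue field of $R$: because $R$ is complete noetherian local, this duality interchanges noetherian and artinian $R$-modules, and transports the right AR data over $\End(G)^{\op}$ into left AR data over $\End(G)$, which via Yoneda translates back into left AR conflations in $\EE$.

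For (2) $\Rightarrow$ (3), I would identify $\eff\EE$ with $\mod\un{\EE}$ via Lemma \ref{effstable} and use that each right AR conflation provides an explicit length-two projective resolution of the corresponding simple $\un{\EE}$-module $S_Z$. For fixed indecomposables $X, Y \in \un{\EE}$, the goal is to show that $\un{\EE}(X, Y)$ has finite $R$-length. Starting from the short exact sequence $0 \to \rad_{\un{\EE}}(-, Y) \to \un{\EE}(-, Y) \to S_Y \to 0$ in $\mod\un{\EE}$ and evaluating at $X$, the top piece $S_Y(X)$ is finite-dimensional over the residue field of $\End_{\un{\EE}}(Y)$, which is itself finite-dimensional over $R/\mm$ by the Hom-noetherian and completeness hypotheses. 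Applying the right AR conflation ending at $Y$, one can express $\rad_{\un{\EE}}(-, Y)$ as a quotient of $\un{\EE}(-, B)$ for $B$ the middle term, and then iterate to filter $\un{\EE}(X, Y)$ by subquotients of simple type, each of finite $R$-length.

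The main obstacle in both implications is the transfer between the coarse Hom-noetherian structure over $R$ and the finer finite-length behavior needed. In (1) $\Rightarrow$ (2), producing left AR conflations from right ones without an artinian base ring is delicate and relies crucially on the completeness of $R$ via Matlis duality. In (2) $\Rightarrow$ (3), the hard part is ensuring that the iteration above terminates in a uniform $R$-length bound rather than producing an infinite filtration with finite-length subquotients; this requires controlling the radical filtration of $\un{\EE}(-, Y)$ using the global AR combinatorics, not only local Hom data.
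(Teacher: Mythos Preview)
The paper does not prove this lemma; it is quoted from \cite[Proposition~A.5]{en2} without argument, so there is no in-paper proof to compare against. Your plan can still be assessed on its own terms.

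Your route for (1) $\Rightarrow$ (2) is essentially correct, though the Matlis-duality detour for left AR conflations is heavier than needed. Since $\EE=\add G$ also yields $\mod\EE^{\op}\simeq \mod\End_\EE(G)$, and $\End_\EE(G)$ is again a module-finite noetherian $R$-algebra, every simple in $\mod\EE^{\op}$ is finitely presented for the same reason as on the right; the dual of Proposition~\ref{exsimp} applied to the Krull-Schmidt exact category $\EE^{\op}$ (which needs no enough-projectives hypothesis) then gives left AR conflations directly. No duality on the module level is required.

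For (2) $\Rightarrow$ (3) there is a genuine gap, and it is exactly the one you flag but do not close. Knowing that each radical layer $\rad^{\,n}_{\un\EE}(X,Y)/\rad^{\,n+1}_{\un\EE}(X,Y)$ has finite $R$-length does \emph{not} force $\un\EE(X,Y)$ to have finite $R$-length: a finitely generated $R$-module may carry an infinite separated filtration with finite-length subquotients (for instance $R$ itself with its $\mm$-adic filtration when $\dim R>0$). Your iteration produces exactly such a filtration and nothing more. Appealing to ``global AR combinatorics'' is not an argument, and no Harada--Sai bound is available because Hom-finiteness of $\un\EE$ is the very conclusion you are after. The arguments that actually establish this implication do not iterate the radical; they extract \emph{artinianity} of $\un\EE(X,Y)$ over $R$ from the almost split conflation, typically via an Auslander--Reiten-type pairing identifying stable Hom with the Matlis dual of an $\Ext^1$-group over the complete local base, and then combine artinianity with Hom-noetherianity to conclude finite length. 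Your scheme never produces any artinian information, which is why it cannot terminate.
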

Thus we immediately obtain the following.
\begin{corollary}[{\cite[Corollary 3.18]{en2}}]\label{finadm}
Let $R$ be a complete noetherian local ring and $\EE$ a Hom-noetherian idempotent complete exact $R$-category. Suppose that $\EE$ has enough projectives and $\EE$ is of finite type. Then $\EE$ is admissible and $\AR(\EE) = \Ex(\EE)$ holds.
\end{corollary}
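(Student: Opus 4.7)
The plan is to assemble this corollary as a direct consequence of the three previously established results, chaining them in the order Lemma \ref{homfin} $\to$ Proposition \ref{propab} $\to$ Proposition \ref{part1}.

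First, I would verify that $\EE$ has a projective generator, not merely enough projectives. Since $\EE$ is of finite type and idempotent complete, there are only finitely many indecomposable projectives up to isomorphism; letting $P$ be the direct sum of a complete set of representatives, we have $\add P$ equal to the full subcategory of projectives, so $P$ is a projective generator. This puts us in the setting where Proposition \ref{propab} can be invoked.

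Next, I would apply Lemma \ref{homfin}: the hypotheses of the corollary (complete noetherian local $R$, Hom-noetherian idempotent complete exact $R$-category with enough projectives) match exactly, and finite type gives (1), whence (3), i.e.\ $\un{\EE}$ is Hom-finite over $R$. With $\un{\EE}$ Hom-finite and $\EE$ of finite type, the converse direction of Proposition \ref{propab} yields that $\EE$ is admissible (and, incidentally, that $\un{\EE}$ admits a weak cogenerator, though we do not need this here). Finally, admissibility plus Proposition \ref{part1} gives $\AR(\EE) = \Ex(\EE)$.

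There is essentially no obstacle: the corollary is a bookkeeping combination of the machinery developed earlier. The only point worth flagging in the write-up is the small observation that \emph{enough projectives} upgrades to a \emph{projective generator} in the finite-type Krull--Schmidt setting, since Proposition \ref{propab} is stated under the latter hypothesis. Everything else is invoking the right lemma in the right order.
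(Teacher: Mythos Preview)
Your proposal is correct and matches the paper's approach exactly: the paper gives no explicit proof beyond ``Thus we immediately obtain the following'' after Lemma~\ref{homfin}, and the implicit chain is precisely Lemma~\ref{homfin} $\to$ Proposition~\ref{propab} (converse direction) $\to$ Proposition~\ref{part1}. Your added remark that enough projectives upgrades to a projective generator under the finite-type hypothesis is a worthwhile clarification, since Proposition~\ref{propab} is stated for a projective generator.
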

Note that the assumption on $\EE$ here is rather mild. In particular, this corollary can be applied to $\EE:= \CM\Lambda$ for an $R$-order $\Lambda$ which is \emph{CM-finite}, that is, $\CM\Lambda$ is of finite type. Thus this provides a further generalization of \cite[Proposition 2.2]{relations}.

\subsection{Existence of a weak cogenerator of the stable category}
For the implication (b) $\Rightarrow$ (a), we need a weak cogenerator of $\un{\EE}$ (Proposition \ref{propab}). To this purpose, let us recall the notions of functorially finiteness and resolving subcategories.
\begin{definition}\label{functfin}
Let $\AA$ be an additive category and $\EE$ an additive subcategory of $\AA$.
\begin{enumerate}
 \item A morphism $f:E_X \to X$ in $\AA$ is said to be a \emph{right $\EE$-approximation} if $E_X$ is in $\EE$ and every morphism $E \to X$ with $E\in\DD$ factors through $f$.
 \item $\EE$ is said to be \emph{contravariantly finite} if every object in $\AA$ has a right $\EE$-approximation.
\end{enumerate}
A \emph{left $\EE$-approximation} and \emph{covariantly finiteness} are defined dually. We say that $\EE$ is \emph{functorially finite} if it is both contravariantly and covariantly finite.
\end{definition}

\begin{definition}
Let $\Lambda$ be a noetherian ring and $\EE$ a subcategory of $\mod\Lambda$. We say that $\EE$ is a \emph{resolving} subcategory of $\mod\Lambda$ if it satisfies the following conditions:
\begin{enumerate}
 \item $\EE$ contains $\Lambda$.
 \item $\EE$ is closed under extensions, that is, for each exact sequence $0 \to X \to Y \to Z \to 0$, if both $X$ and $Z$ are in $\EE$, then so is $Y$.
 \item $\EE$ is closed under kernels of surjections, that is, for each exact sequence $0 \to X \to Y \to Z \to 0$, if both $Y$ and $Z$ are in $\EE$, then so is $X$.
 \item $\EE$ is closed under summands.
 \end{enumerate}
\end{definition}
Since each resolving subcategory is an extension-closed subcategory of an abelian category, it has the natural exact structure. Thereby we always regard it as an exact category.

The following result provides a rich source of exact categories such that its stable category has a weak cogenerator.
\begin{proposition}\label{weakcogen}
Let $\Lambda$ be a semiperfect noetherian ring and $\EE$ a contravariantly finite resolving subcategory of $\mod\Lambda$. Then $\un{\EE}$ has a weak cogenerator, which can be chosen as a right $\EE$-approximation of $\Lambda / \rad \Lambda$.
\end{proposition}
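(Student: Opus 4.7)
The plan is to take a right $\EE$-approximation $f\colon C\to \Lambda/\rad\Lambda$ (which exists by contravariant finiteness, noting $\Lambda/\rad\Lambda \in \mod\Lambda$ since $\Lambda$ is noetherian and semiperfect) and show that $C$ is a weak cogenerator of $\un{\EE}$. So assume $X\in \EE$ satisfies $\un{\EE}(X,C)=0$, i.e.\ every morphism $X\to C$ factors through a projective, and we must show $X\in \add\Lambda$.

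First, I produce a useful test map into $(\Lambda/\rad\Lambda)^n$. Since $\Lambda$ is semiperfect and $X$ is finitely generated, $X/\rad X$ is a finitely generated semisimple $\Lambda/\rad\Lambda$-module, so every simple direct summand embeds into $\Lambda/\rad\Lambda$; therefore $X/\rad X \hookrightarrow (\Lambda/\rad\Lambda)^n$ for some $n$. Composing with the canonical projection gives a map $\pi \colon X \to (\Lambda/\rad\Lambda)^n$ whose kernel is exactly $\rad X$. Applying the approximation property $f$ to each of the $n$ components (equivalently, using that $C^n \to (\Lambda/\rad\Lambda)^n$ is a right $\EE$-approximation), this $\pi$ lifts through $C^n$: there is $\varphi\colon X\to C^n$ with $f^{\,n}\circ \varphi = \pi$.

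Next, by hypothesis $\un{\EE}(X,C)=0$, each component $X\to C$ of $\varphi$ factors through a projective; taking the direct sum of these factorizations, $\varphi$ itself factors as $X \xrightarrow{\alpha} Q \xrightarrow{\beta} C^n$ with $Q$ a finitely generated projective. Setting $\psi := f^{\,n}\circ \beta \colon Q \to (\Lambda/\rad\Lambda)^n$, we obtain $\pi = \psi\circ\alpha$. Now fix a projective cover $p\colon P\to X$ (semiperfectness). Reducing modulo the radical and using that $\bar p\colon P/\rad P \xrightarrow{\sim} X/\rad X$ is an isomorphism while $\pi$ is the composite of $X\to X/\rad X$ with an injection into a semisimple module, we deduce that the induced map $\overline{\alpha\circ p}\colon P/\rad P \to Q/\rad Q$ is injective, hence (over the semisimple ring $\Lambda/\rad\Lambda$) split injective.

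A standard Nakayama-type argument then promotes this to a splitting at the level of the projectives: lift a retraction $s\colon Q/\rad Q\to P/\rad P$ to $\tilde s\colon Q\to P$ by projectivity; then $\tilde s\circ \alpha\circ p$ reduces to $\id_{P/\rad P}$, so it is a surjective (hence, by noetherianity, an iso) endomorphism of $P$. Therefore $\alpha\circ p$ is split injective, say with retract $r\colon Q\to P$, and $p\circ r\circ\alpha$ equals $\id_X$ after precomposition with the surjection $p$, hence equals $\id_X$. Thus $\alpha$ is a split injection into a projective, so $X\in\add\Lambda$, proving $C$ is a weak cogenerator of $\un\EE$.

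The main obstacle is the final step: translating the purely semisimple information ``$X/\rad X$ embeds in $(\Lambda/\rad\Lambda)^n$'' into the module-theoretic conclusion that $X$ itself splits off the projective $Q$. This is what forces the detour through a projective cover of $X$ and the Nakayama lifting argument; everything else is a direct unwinding of the approximation property and the hypothesis $\un{\EE}(X,C)=0$.
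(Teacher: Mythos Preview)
Your proof is correct and follows essentially the same approach as the paper's: both take a right $\EE$-approximation of $\Lambda/\rad\Lambda$, deduce that every map from the test object $W$ to a semisimple module factors through a projective, and then run a projective-cover argument to conclude that $W$ is projective. The paper's execution of the last step is a bit slicker: it factors $\pi\colon W\to W/\rad W$ through the projective cover $p\colon P\to W/\rad W$ as $\pi=p\circ\psi$, and since the projective cover $\varphi\colon P\to W$ satisfies $p\circ\psi\circ\varphi=\pi\circ\varphi=p$, right minimality of $p$ forces $\psi\varphi$ to be an isomorphism, hence $\varphi$ is an isomorphism---this replaces your explicit Nakayama lifting and splitting argument with a one-line appeal to minimality.
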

\begin{proof}
Since $\EE$ is resolving in $\mod\Lambda$, it is straightforward to see that $\EE$ has a projective generator $\Lambda$.
Let us construct a weak cogenerator $\un{X}$ of $\un{\EE}$ (cf. \cite[Lemma 2.4]{ar1}).
Since $\Lambda$ is semiperfect, $\Lambda / \rad \Lambda$ is a direct sum of all simple $\Lambda$-modules up to multiplicity. Take a right $\EE$-approximation $f:X \to \Lambda / \rad \Lambda$ of $\Lambda / \rad \Lambda$, and we claim that $\un{X}$ is a weak cogenerator of $\un{\EE}$.

Suppose that $\un{\EE}(\un{W},\un{X})=0$. Then it follows from this that every morphism from $W$ to finitely generated semisimple modules factors through some projective module. We will show that $W$ is projective, that is, $\un{W}=0$ in $\un{\EE}$.

For a Jacobson radical $\rad W$ of $W$, let $\pi:W \defl W/\rad W$ be a natural projection and take a projective cover $p:P \defl W/\rad W$ (this is possible since $\Lambda$ is semiperfect). It follows that there exists a projective cover $\varphi:P \defl W$ such that the following diagram commutes:

\[
\begin{tikzcd}
P \arrow[r,"\varphi", two heads] \arrow[rd, "p"', two heads] & W \arrow[d,"\pi", two heads]\arrow["\psi", r, dashed] & P \arrow[ld, "p", two heads] \\
 & W/\rad W
\end{tikzcd}
\]
Since $W/\rad W$ is semisimple, $\pi$ must factor through some projective module. Thus there exists $\psi:W \to P$ which makes the above diagram commute. On the other hand, since $p$ is right minimal, $\psi \circ \varphi$ must be an isomorphism. It follows that $\varphi$ is an isomorphism, thus $W$ is projective.
\end{proof}

Now one immediately obtains the following result about the equivalence of (a) and (b) for a contravariantly finite resolving subcategories over noetherian algebras. We will treat the case of artin algebras later (see Theorem \ref{artinmain}) since one can prove more.
\begin{corollary}\label{cmab}
Let $R$ be a complete noetherian local ring, $\Lambda$ a noetherian $R$-algebra and $\EE$ a contravariantly finite resolving subcategory of $\mod\Lambda$. Then $\EE$ is of finite type if and only if $\EE$ is admissible.
\end{corollary}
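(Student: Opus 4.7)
The plan is to reduce this directly to Proposition \ref{propab}, with the two remaining hypotheses in that proposition supplied by Corollary \ref{finadm} (for the direction requiring Hom-finiteness of $\un{\EE}$) and Proposition \ref{weakcogen} (for the direction requiring a weak cogenerator of $\un{\EE}$). So really there is nothing to prove beyond checking that the hypotheses line up.

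First I would verify that $\EE$ fits the general framework. Since $\EE$ is an extension-closed, summand-closed subcategory of the Krull-Schmidt category $\mod\Lambda$, it is Krull-Schmidt and idempotent complete. Because $\Lambda$ is module-finite over the noetherian ring $R$, every Hom set $\EE(X,Y) \subseteq \Hom_\Lambda(X,Y)$ is a finitely generated $R$-module, so $\EE$ is Hom-noetherian as an $R$-category. Resolvingness gives $\Lambda \in \EE$, and closure under kernels of deflations lets us take projective covers in $\mod\Lambda$ entirely inside $\EE$, so $\EE$ has enough projectives with projective generator $\Lambda$. Finally, since $R$ is a complete noetherian local ring and $\Lambda$ is finitely generated as an $R$-module, $\Lambda$ is semiperfect; this is the ingredient needed to invoke Proposition \ref{weakcogen}.

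For the implication (a) $\Rightarrow$ (b), I would simply quote Corollary \ref{finadm}: every hypothesis of that corollary has just been verified, and its conclusion is precisely that $\EE$ is admissible (and, as a bonus, that $\AR(\EE) = \Ex(\EE)$). For the converse (b) $\Rightarrow$ (a), Proposition \ref{weakcogen} applies because $\Lambda$ is semiperfect and $\EE$ is contravariantly finite resolving, producing a weak cogenerator of $\un{\EE}$. Together with admissibility, this is exactly hypothesis (1) of Proposition \ref{propab}, whose conclusion (2) says $\EE$ is of finite type.

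There is no real obstacle; the only thing one has to be slightly careful about is the semiperfectness of $\Lambda$, which is where the completeness of $R$ is used (and which is what makes projective covers available in the proof of Proposition \ref{weakcogen}). Everything else is a matter of aligning the bookkeeping of the preceding section with the four closure properties of a contravariantly finite resolving subcategory.
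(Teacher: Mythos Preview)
Your proposal is correct and follows exactly the same approach as the paper: use Corollary \ref{finadm} for the direction ``finite type $\Rightarrow$ admissible'' and combine Proposition \ref{weakcogen} (via semiperfectness of $\Lambda$) with Proposition \ref{propab} for the converse. Your write-up is in fact more careful than the paper's in verifying the ambient hypotheses (Krull--Schmidt, Hom-noetherian, projective generator), but the logical skeleton is identical.
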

\begin{proof}
There exists a weak cogenerator of $\un{\EE}$ by Proposition \ref{weakcogen} since $\Lambda$ is semiperfect. Thus $\EE$ is of finite type if $\EE$ is admissible by Proposition \ref{propab}.
The other implication follows from Corollary \ref{finadm}.
\end{proof}

\begin{example}\label{cmcontfin}
Let $R$ be a complete Cohen-Macaulay local ring and $\Lambda$ an $R$-order. Then it is well-known that $\CM\Lambda$ is a contravariantly finite resolving subcategory of $\mod\Lambda$ (e.g. \cite{ab}). In particular, $\un{\CM}\,\Lambda$ has a weak cogenerator by Proposition \ref{weakcogen} and Corollary \ref{cmab} applies to $\CM\Lambda$.
Thus Corollary \ref{cmab} generalizes the result \cite[Lemma 2.4 (b)]{relations} on the category $\CM\Lambda$.
\end{example}

\subsection{On the condition (CF)}
Recall that in Section \ref{secbc}, we have investigated the relation between (b) admissibility and (c) AR=Ex, and Theorem \ref{fcfl} shows that the condition (CF) ensures that (b) and (c) are equivalent. Now let us give some actual examples where $\EE$ satisfies (CF).
\emph{In this subsection, we fix a commutative noetherian ring $R$}.

First we deal with exact categories which are Hom-finite.
\begin{proposition}\label{homfinfc}
Let $\EE$ be a Hom-finite exact $R$-category. Then $\EE$ satisfies {\upshape (CF)}.
\end{proposition}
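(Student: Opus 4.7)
The plan is to exploit additivity of $R$-length on the canonical resolution of objects in $\eff\EE$, and observe that in the Hom-finite case this length function depends only on the objects $X$, $Y$, $Z$ appearing in the conflation, not on the particular maps. This is exactly the kind of invariance that (CF) demands.

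First I would note that for each conflation $0 \to X \xrightarrow{f_i} Y \xrightarrow{g_i} Z \to 0$, since $\EE(W,-)$ is left exact on conflations for every $W \in \EE$, the definition $M_i = \coker \EE(-, g_i)$ gives an exact sequence
\[
0 \to \EE(-, X) \to \EE(-, Y) \to \EE(-, Z) \to M_i \to 0
\]
in $\Mod\EE$. Evaluating at any $W \in \EE$ and using Hom-finiteness of $\EE$ over $R$, each of the three representable terms becomes an $R$-module of finite length, and hence so does $M_i(W)$. Additivity of $R$-length on this four-term exact sequence then yields
\[
\length_R M_i(W) = \length_R \EE(W,X) - \length_R \EE(W,Y) + \length_R \EE(W,Z).
\]

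The crucial point is that the right-hand side is manifestly independent of $i$. Consequently $\length_R M_1(W) = \length_R M_2(W)$ for every $W$, and in particular $\supp M_1 = \supp M_2$. Finally, Lemma \ref{fl}, which applies precisely because $\EE$ is Hom-finite, tells us that an object of $\mod\EE$ has finite length in $\Mod\EE$ if and only if its support is finite; combining this with the equality of supports, $M_1$ has finite length exactly when $M_2$ does, which is (CF).

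There is essentially no substantive obstacle: the only point worth checking is injectivity of $\EE(-,X) \to \EE(-,Y)$, which follows from $f_i$ being an inflation.
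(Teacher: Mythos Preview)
Your proof is correct and essentially identical to the paper's: both arguments evaluate the four-term resolution at an arbitrary $W$, use additivity of $R$-length to conclude $\length_R M_1(W) = \length_R M_2(W)$ (hence $\supp M_1 = \supp M_2$), and then invoke Lemma~\ref{fl}. The only cosmetic difference is that the paper phrases the length computation via a character map $\chi_W \colon \KKK_0(\mod\EE) \to \Z$ and the equality $[M_1] = [M_2]$ in $\KKK_0(\mod\EE)$, whereas you compute the alternating sum directly.
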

\begin{proof}
Let $0 \to X \xrightarrow{f_i} Y \xrightarrow{g_i} Z \to 0$ be conflations in $\EE$ for $i=1,2$, and put $M_i := \coker \EE(-,g_i) \in \eff\EE$. Then we have that $[M_1]= [\EE(-,X)] - [\EE(-,Y)] + [\EE(-,Z)] = [M_2]$ holds in $\KKK_0(\mod\EE)$. We will show that if $M_1$ has finite length, then so does $M_2$. It suffices to show that $\supp M_2$ is finite by Lemma \ref{fl}.

Let $X$ be an indecomposable object in $\EE$. For each $F$ in $\mod\EE$, the assignment $F \mapsto \length_R F(X)$ makes sense since $\EE$ is Hom-finite. It clearly extends to the group homomorphism $\chi_X: \KKK_0(\mod\EE) \to \Z$. Now  we have that $X\in\supp M_1 \Leftrightarrow \chi_X [M_1] \neq 0 \Leftrightarrow \chi_X [M_2] \neq 0 \Leftrightarrow X\in\supp M_2$ by $[M_1] = [M_2]$. Thus $\supp M_1 = \supp M_2$ holds, and this is finite since $M_1$ has finite length.
\end{proof}

Now we immediately obtain the following general result on artin algebras.
\begin{theorem}\label{artinmain}
Let $\Lambda$ be an artin algebra and $\EE$ a contravariantly finite resolving subcategory of $\mod\Lambda$. Then the following are equivalent.
\begin{enumerate}
\item $\EE$ is of finite type.
\item $\EE$ is admissible.
\item $\AR(\EE) = \Ex(\EE)$ holds.
\item $\AR(\EE) \otimes_\Z \Q = \Ex(\EE) \otimes_\Z \Q$ holds.
\end{enumerate}
\end{theorem}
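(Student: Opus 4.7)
The plan is that this theorem follows essentially for free by combining the general machinery of Section~\ref{sec3} with the two applications already established earlier in this section. Since $\Lambda$ is an artin $R$-algebra, $R$ is artinian local (in particular complete noetherian local), and $\mod\Lambda$ is Hom-finite over $R$. Any additive subcategory $\EE \subseteq \mod\Lambda$ therefore inherits Hom-finiteness, and a resolving subcategory is in particular extension-closed with projective generator $\Lambda$ and idempotent complete (Krull-Schmidt), so $\EE$ meets the hypotheses of all the preparatory results.

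First I would verify (1) $\Rightarrow$ (2). Since $\EE$ is Hom-noetherian (being Hom-finite over the artinian ring $R$), idempotent complete, has enough projectives, and is of finite type, Corollary~\ref{finadm} directly yields that $\EE$ is admissible (and incidentally that $\AR(\EE) = \Ex(\EE)$ holds). The implication (2) $\Rightarrow$ (3) is Proposition~\ref{part1}, and (3) $\Rightarrow$ (4) is trivial by tensoring with $\Q$.

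For (4) $\Rightarrow$ (2), I would invoke Theorem~\ref{fcfl}: it suffices to check that $\EE$ satisfies the condition (CF). But this is Proposition~\ref{homfinfc}, applicable because $\EE$ is Hom-finite over $R$.

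Finally, for (2) $\Rightarrow$ (1), I would apply Proposition~\ref{propab}. Its hypotheses require a weak cogenerator of $\un{\EE}$ and Hom-finiteness of $\un{\EE}$ over $R$. The weak cogenerator is supplied by Proposition~\ref{weakcogen}, since $\Lambda$ is semiperfect (being an artin algebra) and $\EE$ is contravariantly finite and resolving in $\mod\Lambda$. Hom-finiteness of $\un{\EE}$ is immediate from Hom-finiteness of $\EE$, as $\un{\EE}$ is an additive quotient. There is no genuine obstacle here; the proof amounts to checking that the abstract hypotheses of Section~\ref{sec3} all specialize to the artin setting, with Hom-finiteness handling both condition~(CF) and the finiteness of $\un{\EE}$, and semiperfectness providing the weak cogenerator.
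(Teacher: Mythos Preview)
Your proposal is correct and follows essentially the same route as the paper: the paper also combines Propositions~\ref{propab} and~\ref{weakcogen} for the equivalence of (1) and (2), and Proposition~\ref{homfinfc} together with Theorem~\ref{fcfl} for the equivalence of (2), (3), and (4). One minor remark: for your implication (2) $\Rightarrow$ (1) via Proposition~\ref{propab}, only the weak cogenerator is needed---Hom-finiteness of $\un{\EE}$ is the hypothesis for the \emph{other} direction of that proposition---but this does not affect the validity of your argument.
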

\begin{proof}
Since $\Lambda$ is artin algebra, $\Lambda$ is semiperfect and $\un{\EE}$ is Hom-finite. Thus Propositions \ref{propab} and \ref{weakcogen} imply that (1) and (2) are equivalent. Furthermore, since $\EE$ is Hom-finite, $\EE$ satisfies (CF) by Proposition \ref{homfinfc}. Thus the other conditions are also equivalent by Theorem \ref{fcfl}.
\end{proof}

Contravariantly finite resolving subcategories are closely related to so called \emph{cotilting modules} by the famous result of \cite{applications}. For the convenience of the reader, we explain their result here.
\begin{definition}
Let $\Lambda$ be an artin algebra and $U$ a $\Lambda$-module in $\mod\Lambda$. Then $U$ is called a \emph{cotilting $\Lambda$-module} if it satisfies the following conditions:
\begin{enumerate}
\item the injective dimension of $U$ is finite.
\item $\Ext_\Lambda^{>0}(U,U)=0$ holds.
\item There exists an exact sequence $0 \to U_n \to \cdots \to U_0 \to D \Lambda \to 0$ of $\Lambda$-modules with $U_i \in \add U$ for each $i$, where $D$ denotes the standard duality $D: \mod \Lambda^{\op} \to \mod\Lambda$.
\end{enumerate}
\end{definition}
For a module $U$ in $\mod\Lambda$, we denote by $^\perp U$ the full subcategory of $\mod\Lambda$ consisting of modules $X$ satisfying $\Ext^{>0}_\Lambda(X,U)=0$.
\begin{proposition}[{\cite[Theorem 5.5]{applications}}]\label{cotiltcont}
Let $\Lambda$ be an artin algebra and $U$ a cotilting $\Lambda$-module. Then $^\perp U$ is a contravariantly finite resolving subcategory of $\mod\Lambda$.
\end{proposition}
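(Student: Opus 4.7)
The resolving conditions are immediate from the long exact sequence of $\Ext_\Lambda(-, U)$: the module $\Lambda$ is projective, so $\Lambda \in {}^\perp U$; extension closure and closure under kernels of epimorphisms both fall out of the relevant long exact sequence applied to a short exact sequence with two terms in $^\perp U$; and closure under direct summands is automatic from the additivity of $\Ext$.

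The essential content of the proposition is contravariant finiteness. My plan is to produce, for each $M \in \mod\Lambda$, a short exact sequence
\[
0 \to Y_M \to X_M \xrightarrow{\pi_M} M \to 0
\]
with $X_M \in {}^\perp U$ and $Y_M$ lying in the subcategory $\widehat{\add U}$ of modules admitting a finite coresolution by objects of $\add U$. Once this is in hand, dimension shifting through the $\add U$-coresolution of $Y_M$ together with $\Ext^{>0}_\Lambda(U,U)=0$ gives $\Ext^1_\Lambda(X, Y_M) = 0$ for every $X \in {}^\perp U$, which forces $\pi_M$ to be a right $^\perp U$-approximation: any morphism $X \to M$ with $X \in {}^\perp U$ pulls back to an extension $0 \to Y_M \to E \to X \to 0$, which splits by the $\Ext^1$-vanishing and so yields a lift $X \to X_M$ factoring the original map.

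For the construction itself, I would invoke the third defining property of $U$ together with the fact that $D\Lambda$ is an injective cogenerator of $\mod\Lambda$: every injective $\Lambda$-module is a summand of a finite power of $D\Lambda$, so it inherits a finite $\add U$-coresolution of length at most $n$, where $n$ is the length of the fixed resolution $0 \to U_n \to \cdots \to U_0 \to D\Lambda \to 0$. Starting from an injective envelope $M \hookrightarrow I^0$ and splicing with the $\add U$-coresolution of $I^0$, I would then run a pullback/Wakamatsu-style induction on this length, at each step pulling back one $\add U$-layer along the inclusion of $M$ and strictly decreasing the remaining coresolution length; after at most $n$ steps the required sequence materializes.

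The main obstacle is to certify the two properties simultaneously across the induction: that the constructed $X_M$ stays in $^\perp U$ at every stage, and that the kernel $Y_M$ assembled from successive pullbacks indeed admits a finite $\add U$-coresolution. Both are ultimately controlled by dimension shifting through the cotilting coresolution of $D\Lambda$, so the technical heart of the argument is a careful bookkeeping of how $\add U$-coresolutions are compatible with pullbacks and how the Ext-orthogonality propagates through the inductive step.
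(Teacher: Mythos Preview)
The paper does not supply its own proof here; the proposition is quoted from \cite[Theorem~5.5]{applications} and used as a black box. Your strategy is the right one (it is the Auslander--Buchweitz approximation argument behind that reference), but there is a genuine slip that breaks the proof as written: the word \emph{coresolution} should be \emph{resolution} throughout. Axiom (3) in the definition of a cotilting module exhibits $0 \to U_n \to \cdots \to U_0 \to D\Lambda \to 0$, which is a finite $\add U$-\emph{resolution} of $D\Lambda$; hence every injective module has finite $\add U$-resolution dimension, not coresolution dimension. This matters for the dimension-shifting step: if $Y$ has a finite $\add U$-resolution $0 \to U_m' \to \cdots \to U_0' \to Y \to 0$, then for $X \in {}^\perp U$ one obtains $\Ext^i_\Lambda(X,Y) \hookrightarrow \Ext^{i+1}_\Lambda(X,K_1) \hookrightarrow \cdots \hookrightarrow \Ext^{i+m}_\Lambda(X,U_m')=0$, as desired. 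With a coresolution $0 \to Y \to U^0 \to \cdots \to U^m \to 0$ the shift runs the other way and yields only $\Ext^{>m}_\Lambda(X,Y)=0$, which says nothing about $\Ext^1$.

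The construction sketch inherits the same problem. Starting from $M \hookrightarrow I^0$ and pulling back the surjection $U_0' \twoheadrightarrow I^0$ from the $\add U$-resolution of $I^0$ does give $0 \to K \to E \to M \to 0$ with $K \in \widehat{\add U}$, but there is no reason for $E$ to lie in ${}^\perp U$ (one finds $\Ext^i_\Lambda(E,U) \cong \Ext^{i+1}_\Lambda(I^0/M,U)$), and your proposed iteration does not visibly improve this. The standard argument instead uses the \emph{finite injective dimension} of $U$, which you never invoke: if $\id U \le d$ then $\Omega^d M \in {}^\perp U$ for every $M$, and one climbs back to $M$ by combining the syzygy sequences with monic left $\add U$-approximations $0 \to X \to U' \to X'' \to 0$ of objects $X \in {}^\perp U$ (these exist and are monic precisely because injectives admit finite $\add U$-resolutions and $\Ext^{>0}_\Lambda({}^\perp U,\widehat{\add U})=0$). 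Replacing ``coresolution'' by ``resolution'' and running this induction on the ${}^\perp U$-resolution dimension instead turns your outline into a correct proof.
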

Thus we can apply Theorem \ref{artinmain} to $\EE:= {}^\perp U$ for a cotilting module $U$ over an artin algebra. This provides a rich source of examples:
\begin{itemize}
\item Let $\EE$ be a functorially finite torsionfree class of $\mod\Lambda$ for an artin algebra $\Lambda$ (see \cite{ASS}). Then by factoring out the annihilator of $\EE$, we may assume that $\EE$ is a faithful functorially finite torsionfree class. It is well-known that such $\EE$ is of the form $^\perp U$ for a cotilting $\Lambda$-module with $\id U \leq 1$ (see e.g. \cite[Theorem]{smalo}), so Theorem \ref{artinmain} applies to $\EE$. By duality, every functorially finite torsion class over artin algebras is also an example.
\item An artin algebra $\Lambda$ is called \emph{Iwanaga-Gorenstein} if both $\id(\Lambda_\Lambda)$ and $\id({}_\Lambda \Lambda)$ are finite. In this case, we write $\GP\Lambda := {}^\perp \Lambda$ and call it the category of \emph{Gorenstein-projective} modules. It is immediate that $\Lambda$ itself is a cotilting $\Lambda$-module, so we can apply Theorem \ref{artinmain} to $\EE := \GP\Lambda$.
\end{itemize}

Next we will consider the case $\dim R > 0$.
Although $\EE$ is rarely Hom-finite, we can conclude (CF) by using the Hom-finiteness of the stable category $\un{\EE}$ in some cases. Let us introduce some terminologies to state our result.

Let $\EE$ be an exact category with enough projectives. In this case, we have the syzygy functor $\Omega: \un{\EE} \to \un{\EE}$. We denote by $\un{X}$ in $\un{\EE}$ for the image of $X$ in $\EE$ under the natural projection $\EE \defl \un{\EE}$. For an object $X$ in $\EE$, we say that $X$ has \emph{finite projective dimension} if there exists some integer $n\geq 0$ such that $\Omega^n \un{X} = 0$ holds in $\un{\EE}$.

Recall that an exact category $\EE$ is called \emph{Frobenius} if $\EE$ has enough projectives and enough injectives, and the classes of projectives and injectives coincide.

\begin{proposition}\label{glfc}
Let $R$ be a complete noetherian local ring and $\EE$ a Hom-noetherian idempotent complete exact $R$-category with AR conflations. Suppose that $\EE$ satisfies either one of the following conditions:
\begin{enumerate}
\item $\EE$ has enough projectives and every object in $\EE$ has finite projective dimension, or
\item $\EE$ is Frobenius.
\end{enumerate}
 Then $\EE$ satisfies {\upshape (CF)}.
\end{proposition}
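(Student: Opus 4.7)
The plan is to translate (CF) via the equivalence $\eff\EE \simeq \mod\un{\EE}$ of Lemma \ref{effstable} and then treat the two cases separately. Under the common hypothesis that $\EE$ has AR conflations, Lemma \ref{homfin} yields that $\un{\EE}$ is Hom-finite over $R$. Each $M_i \in \eff\EE$ then corresponds to a $\un{\EE}$-module $\un{M_i} \in \mod\un{\EE}$ with presentation $\un{\EE}(-,\un{Y}) \to \un{\EE}(-,\un{Z}) \to \un{M_i} \to 0$; because finitely-generated submodules of objects of $\eff\EE$ stay in $\eff\EE$ by Proposition \ref{serre}(1), a composition series of $M_i$ in $\Mod\EE$ lies inside $\eff\EE$, so finite length of $M_i$ in $\Mod\EE$ coincides with finite length of $\un{M_i}$ in $\Mod\un{\EE}$, which, by the converse direction of Lemma \ref{fl}, is equivalent to $\un{M_i}$ having finite support in $\ind\un{\EE}$. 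Setting $\chi_{\un{W}}(F) := \length_R F(\un{W})$ (finite by Hom-finiteness of $\un{\EE}$ and additive on short exact sequences), the task reduces to showing that $\supp\un{M_1}$ finite implies $\supp\un{M_2}$ finite.

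For case (1), the plan is to prove the stronger equality $\length_R M_1(W) = \length_R M_2(W)$ for every $W \in \EE$. Fix such $W$, set $d := \pd W < \infty$, and apply $\EE(W,-)$ to the conflation to produce the standard long exact sequence in $\Ext$. Its tail starting from $M_i(W) = \coker \EE(W,g_i)$ is the finite exact sequence
\[
0 \to M_i(W) \to \Ext^1_\EE(W,X) \to \Ext^1_\EE(W,Y) \to \Ext^1_\EE(W,Z) \to \cdots \to \Ext^d_\EE(W,Z) \to 0,
\]
whose terms all have finite $R$-length, since the dimension-shifting isomorphism $\Ext^n_\EE(W,V) \cong \un{\EE}(\Omega^n \un{W}, \un{V})$ for $n \geq 1$ together with Hom-finiteness of $\un{\EE}$ forces each $\Ext^n_\EE(W,V)$ to have finite length and to depend only on $W$ and $V$. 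The alternating-sum-of-lengths identity then gives
\[
\length_R M_i(W) = \sum_{n=1}^{d}(-1)^{n+1}\bigl(\length_R\Ext^n_\EE(W,X) - \length_R\Ext^n_\EE(W,Y) + \length_R\Ext^n_\EE(W,Z)\bigr),
\]
whose right-hand side depends only on $W, X, Y, Z$, not on the particular conflation. Thus $\supp\un{M_1} = \supp\un{M_2}$ and (CF) follows.

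For case (2), the Frobenius structure makes $\Omega \colon \un{\EE} \to \un{\EE}$ an auto-equivalence and induces an auto-equivalence $(\Omega^{-1})^*\colon\Mod\un{\EE}\to\Mod\un{\EE}$ via $F \mapsto F \circ \Omega^{-1}$. Each conflation becomes a distinguished triangle $\un{X} \to \un{Y} \to \un{Z} \to \un{X}[1]$ in the triangulated category $\un{\EE}$, and its rotated long exact sequence $\un{\EE}(-,\Omega\un{Y}) \to \un{\EE}(-,\Omega\un{Z}) \to \un{\EE}(-,\un{X}) \to \un{\EE}(-,\un{Y})$ identifies $K_i := \ker(\un{\EE}(-,\un{X}) \to \un{\EE}(-,\un{Y}))$ with $\coker(\un{\EE}(-,\Omega\un{Y}) \to \un{\EE}(-,\Omega\un{Z})) \cong (\Omega^{-1})^*\un{M_i}$, so $\chi_{\un{W}}(K_i) = \chi_{\Omega^{-1}\un{W}}(\un{M_i})$. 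Applying $\chi_{\un{W}}$ to the four-term exact sequence
\[
0 \to K_i \to \un{\EE}(-,\un{X}) \to \un{\EE}(-,\un{Y}) \to \un{\EE}(-,\un{Z}) \to \un{M_i} \to 0
\]
and comparing the equations for $i = 1, 2$ (the representable contributions are independent of $i$) yields
\[
\chi_{\un{W}}(\un{M_1}) + \chi_{\Omega^{-1}\un{W}}(\un{M_1}) = \chi_{\un{W}}(\un{M_2}) + \chi_{\Omega^{-1}\un{W}}(\un{M_2}).
\]
Whenever $\un{W}, \Omega^{-1}\un{W} \notin \supp\un{M_1}$, nonnegativity of $\chi$ forces the right-hand side to vanish termwise, so $\un{W} \notin \supp\un{M_2}$. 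Hence $\supp\un{M_2} \subseteq \supp\un{M_1} \cup \Omega(\supp\un{M_1})$, which is finite when $\supp\un{M_1}$ is. The main technical input in each case is the module-level identification (the dimension-shift $\Ext^n_\EE \cong \un{\EE}(\Omega^n -, -)$ in (1), and the triangulated rotation isomorphism $K_i \cong (\Omega^{-1})^*\un{M_i}$ in (2)); the positivity trick in case (2) is the key finishing step.
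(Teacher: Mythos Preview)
Your proof is correct and follows essentially the same strategy as the paper: reduce (CF) to a statement about supports in $\mod\un{\EE}$ via Lemmas~\ref{effstable}, \ref{homfin}, and \ref{fl}, then use an alternating-sum-of-lengths argument along a finite exact sequence whose nonterminal terms depend only on $X,Y,Z$. In case~(2) your argument is virtually identical to the paper's (your $K_i$ is the paper's $N_i$, and your positivity step is a rephrasing of the paper's computation $\supp(M_1\oplus N_1)=\supp(M_2\oplus N_2)$ together with $\supp N_1=\Omega(\supp M_1)$). The only genuine variation is in case~(1): you fix $W$ and use the covariant long exact sequence in $\Ext^\bullet_\EE(W,-)$, which terminates because $\pd W<\infty$; the paper instead builds a finite projective resolution of $M_i$ in $\mod\un{\EE}$ of the form $\un{\EE}(-,\Omega^{j}\un{X})\to\un{\EE}(-,\Omega^{j}\un{Y})\to\un{\EE}(-,\Omega^{j}\un{Z})$, which terminates because $\Omega^n\un{Z}=0$. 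These are dual uses of the finite-projective-dimension hypothesis and both deliver $\length_R M_1(W)=\length_R M_2(W)$; your route avoids mentioning $\KKK_0(\mod\un{\EE})$, while the paper's makes the independence from the particular conflation visible at the level of $K$-theory.
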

\begin{proof}
Let $0 \to X \xrightarrow{f_i} Y \xrightarrow{g_i} Z \to 0$ be conflations in $\EE$ for $i=1,2$. Put $M_i := \coker \EE(-,g_i) \in \eff\EE$ for each $i$ and suppose that $M_1$ has finite length. We will show that so does $M_2$. First note that Lemma \ref{effstable} applies to this situation since $\EE$ has enough projectives, so we identify $\eff\EE$ with $\mod\un{\EE}$ naturally. Then $M_i$ has finite length as an $\EE$-module if and only if it does so as an $\un{\EE}$-module. Moreover, since $\un{\EE}$ is Hom-finite by Lemma \ref{homfin}, this occurs if and only if $\supp M_i$ is finite, where the support is considered inside $\ind \un{\EE}$.

(1)
First we prove that $[M_1]=[M_2]$ holds in $\KKK_0(\mod\un{\EE})$. Choose an integer $n \geq 0$ such that $\Omega^n \un{Z} = 0$ holds in $\un{\EE}$. Then it is classical that we have an exact sequence in $\mod\un{\EE}$ for each $i$:
\begin{align*}
0 &\to \un{\EE}(-,\Omega^{n-1}\un{X}) \to \un{\EE}(-,\Omega^{n-1}\un{Y}) \to \un{\EE}(-,\Omega^{n-1}\un{Z}) \to \cdots \\
&\to \un{\EE}(-,\Omega \un{X}) \to \un{\EE}(-,\Omega \un{Y}) \to \un{\EE}(-,\Omega \un{Z}) \to \un{\EE}(-,\un{X}) \to \un{\EE}(-,\un{Y}) \to \un{\EE}(-,\un{Z}) \to M_i \to 0.
\end{align*}
Therefore it immediately follows that $[M_1] = [M_2]$ holds in $\KKK_0(\mod\un{\EE})$.

Let $\un{X}$ be an indecomposable object of $\un{\EE}$. Since $\un{\EE}$ is a Hom-finite $R$-category by Lemma \ref{homfin}, we have the characteristic map $\chi_{\un{X}}:\KKK_0(\mod\un{\EE}) \to \Z$ which sends $F$ to $\length_R F(\un{X})$. Then we have $\un{X} \in \supp M_1 \Leftrightarrow \chi_{\un{X}}[M_1] \neq 0 \Leftrightarrow \chi_{\un{X}}[M_2] \neq 0 \Leftrightarrow \un{X} \in \supp M_2$, where the support is considered inside $\ind \un{\EE}$. Thus Proposition \ref{fl} implies that $M_2$ has finite length.

(2)
Put $N_i := \ker \un{\EE}(-,\un{f_i})$ for each $i=1,2$. Then we have exact sequences in $\mod\un{\EE}$ as in (1):
\begin{align*}
0 \to N_i \to \un{\EE}(-,\un{X}) \xrightarrow{\un{\EE}(-,\un{f_i})} \un{\EE}(-,\un{Y}) &\xrightarrow{\un{\EE}(-,\un{g_i})} \un{\EE}(-,\un{Z}) \to M_i \to 0, \\
\un{\EE}(-,\Omega\un{Y}) &\xrightarrow{\un{\EE}(-,\Omega\un{g_i})} \un{\EE}(-,\Omega\un{Z}) \to N_i \to 0.
\end{align*}
It follows from the first exact sequence that $[M_1\oplus N_1] = [M_1] + [N_1] = [M_2] + [N_2] = [M_2\oplus N_2]$ holds in $\KKK_0(\mod\un{\EE})$. We will show that $N_1$ has finite length. By the same argument as in (1), we have that $\supp (M_1\oplus N_1) = \supp (M_2 \oplus N_2)$ holds in $\ind\un{\EE}$.
Now $\Omega : \un{\EE} \to \un{\EE}$ is an equivalence since $\EE$ is Frobenius, and let us denote by $\Omega^-:\un{\EE} \to \un{\EE}$ the quasi-inverse of $\Omega$. Then it is easily checked that $\un{X} \in \supp N_1 \Leftrightarrow \Omega^- \un{X} \in \supp M_1 \Leftrightarrow \un{X} \in \Omega (\supp M_1)$ holds for an indecomposable object $\un{X} \in \un{\EE}$, hence $\supp N_1 = \Omega (\supp M_1)$. Therefore $\supp N_1$ is finite, and so is $\supp (M_1\oplus N_1) = \supp (M_2 \oplus N_2)$. Hence $\supp M_2$ is finite, which implies that $M_2$ has finite length.
\end{proof}

For an $R$-order $\Lambda$ with at most an isolated singularity, Proposition \ref{glfc} shows that $\CM\Lambda$ satisfies (CF) if either $\Lambda$ has finite global dimension or $\Lambda$ is a Gorenstein order. Thus we obtain the following result on the category $\CM\Lambda$.
\begin{theorem}\label{ordermain}
Let $R$ be a complete Cohen-Macaulay local ring and $\Lambda$ an $R$-order with at most an isolated singularity. Put $\EE := \CM\Lambda$. Then the following are equivalent.
\begin{enumerate}
\item $\EE$ is of finite type.
\item $\EE$ is admissible.
\end{enumerate}
Assume in addition that either $\Lambda$ has finite global dimension or $\Lambda$ is a Gorenstein order. Then the following are also equivalent.
\begin{enumerate}[resume]
\item $\AR(\EE) = \Ex(\EE)$ holds.
\item $\AR(\EE) \otimes_\Z \Q = \Ex(\EE) \otimes_\Z \Q$ holds.
\end{enumerate}
\end{theorem}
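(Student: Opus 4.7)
The plan is to combine the earlier machinery: verify the technical hypotheses of the three ``implication lemmas'' (Proposition \ref{propab}, Proposition \ref{weakcogen}, Proposition \ref{glfc}) for $\EE = \CM\Lambda$, and then invoke Corollary \ref{finadm} and Theorem \ref{fcfl}. First I would record the standing setup: since $R$ is complete Cohen-Macaulay and $\Lambda$ is an $R$-order, $\EE = \CM\Lambda$ is a Hom-noetherian, idempotent complete exact $R$-category with projective generator $\Lambda$; by the isolated singularity hypothesis and \cite{isolated} cited in Section \ref{sec4}, $\EE$ has AR conflations.

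For the first equivalence (1) $\Leftrightarrow$ (2), the implication (1) $\Rightarrow$ (2) is immediate from Corollary \ref{finadm}. For (2) $\Rightarrow$ (1), Example \ref{cmcontfin} exhibits $\CM\Lambda$ as a contravariantly finite resolving subcategory of $\mod\Lambda$, so Proposition \ref{weakcogen} (which applies since $\Lambda$ is semiperfect, being a noetherian algebra over a complete local ring) produces a weak cogenerator of $\un{\EE}$; then the converse direction of Proposition \ref{propab} gives finite type. This half works without any Gorenstein or global-dimension assumption.

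For the second part, the goal is to apply Theorem \ref{fcfl}, which shows (2) $\Leftrightarrow$ (3) $\Leftrightarrow$ (4) as soon as $\EE$ satisfies (CF). Thus the whole task reduces to verifying (CF) in each of the two cases, and this is exactly what Proposition \ref{glfc} is designed for. If $\Lambda$ is a Gorenstein order, then by definition $\CM\Lambda$ is Frobenius, so Proposition \ref{glfc}(2) applies directly. If $\gl\Lambda < \infty$, then for any $M \in \CM\Lambda$ a projective resolution over $\Lambda$ has finite length, and since $\CM\Lambda$ is resolving the successive syzygies remain in $\EE$; hence every object of $\EE$ has finite projective dimension in the sense of Section \ref{secbc}, and Proposition \ref{glfc}(1) applies. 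In both cases $\EE$ satisfies (CF), and Theorem \ref{fcfl} closes the loop.

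The only mildly delicate point is the last observation that finite global dimension of $\Lambda$ forces every object of $\CM\Lambda$ to have finite projective dimension inside the exact category $\EE$, rather than merely in $\mod\Lambda$; but this is routine from the fact that $\CM\Lambda$ is closed under kernels of deflations to objects of $\CM\Lambda$, so the syzygies computed in $\EE$ agree with those in $\mod\Lambda$. Apart from that, the proof is a straightforward assembly of the earlier results, and no genuinely new argument is required.
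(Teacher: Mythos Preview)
Your proposal is correct and follows essentially the same route as the paper's one-line proof, which simply cites Example \ref{cmcontfin} (giving (1)$\Leftrightarrow$(2) via Corollary \ref{cmab}) and Proposition \ref{glfc} (giving (CF), hence (2)$\Leftrightarrow$(3)$\Leftrightarrow$(4) via Theorem \ref{fcfl}); you have merely unpacked these citations more explicitly and added the routine check that finite global dimension of $\Lambda$ yields finite projective dimension inside $\CM\Lambda$. One tiny wording slip: for the theorem's (2)$\Rightarrow$(1) you need the \emph{forward} implication (1)$\Rightarrow$(2) of Proposition \ref{propab} (admissible plus weak cogenerator implies finite type), not its converse.
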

\begin{proof}
This follows from Example \ref{cmcontfin} and Proposition \ref{glfc}.
\end{proof}
\begin{remark}
Theorem \ref{ordermain} for the case $\Lambda$ has finite global dimension was shown in \cite[Proposition 2.3]{ar1}, but their proof relies on higher algebraic K-theory. Also Theorem \ref{ordermain} generalizes \cite{hir} where $\Lambda$ is assumed to be commutative and Gorenstein.
The author does not know whether all the conditions above are equivalent without any assumption on an $R$-order $\Lambda$, even if $\Lambda$ is commutative.
\end{remark}

\section{Finiteness of syzygies}\label{sec5}
Let $R$ be a complete Cohen-Macaulay local ring and suppose that $\AR(\CM R) = \Ex(\CM R)$ holds. Although we do not know whether $\CM R$ is of finite type, it is shown in \cite{ko} that $\mathsf{\Omega CM}\,R$, the category of syzygies of $\CM R$ is so. In this section, we will extend this result to a non-commutative order.

Let $\EE$ be an exact category with enough projectives. In this section, we denote by $\Omega\EE$ the subcategory of $\EE$ consisting of objects $X$ such that there exists an inflation $X \infl P$ in $\EE$ for some projective object $P$. If $\EE$ is Krull-Schmidt, then so is $\Omega \EE$ since $\Omega\EE$ is closed under direct sums and summands. The essential image of $\Omega\EE$ under the natural projection $\EE \defl \un{\EE}$ coincides with $\Omega \un{\EE}$, the essential image of the syzygy functor $\Omega:\un{\EE} \to \un{\EE}$.
We begin with the following property about $\Omega\EE$, which is of interest in itself.

\begin{lemma}\label{omega-}
Let $R$ be a commutative noetherian ring, and let $\EE$ be a Hom-noetherian idempotent complete exact $R$-category with a projective generator $P$. Then the syzygy functor $\Omega:\un{\EE} \to \Omega\un{\EE}$ has a fully faithful left adjoint $\Omega^-: \Omega\un{\EE}\to \un{\EE}$.
\end{lemma}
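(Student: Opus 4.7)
My plan is to construct $\Omega^-$ explicitly on objects via inflations into projectives, promote it to a functor, establish the adjunction formula $\un{\EE}(\Omega^-\un{Y},\un{W})\iso\un{\EE}(\un{Y},\Omega\un{W})$, and then read off fully faithfulness.

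For $\un{Y}\in\Omega\un{\EE}$, the definition of $\Omega\un{\EE}$ supplies an inflation $i_Y:Y\infl Q$ with $Q$ projective. I set $\Omega^-\un{Y}:=\un{\coker(i_Y)}$, giving a conflation $Y\infl Q\defl \Omega^-Y$ with $Q$ projective. This exhibits $Y$ as a syzygy of $\Omega^-Y$, so it provides a canonical isomorphism $\Omega\,\Omega^-\un{Y}\iso\un{Y}$ in $\un{\EE}$. This isomorphism will serve as the unit of the adjunction, and its invertibility will automatically yield the fully faithfulness of $\Omega^-$ once the adjunction is in place.

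The key technical observation for both functoriality and the adjunction is that for any morphism $g:Y\to V$ in $\EE$, the map
\[
\binom{i_Y}{g}: Y\longrightarrow Q\oplus V
\]
is an inflation, factoring as the split inflation $\binom{1}{g}:Y\infl Y\oplus V$ followed by the direct sum $i_Y\oplus \mathsf{id}_V$; when $V$ is itself projective this presents $Y$ again as a subobject of a projective. For functoriality, taking $g=i_{Y'}\circ f$ for a representative $f:Y\to Y'$ of $\un{f}$, this provides an alternative presentation of $\Omega^-\un{Y}$, from which the projection $(0,\pi_{Y'}):Q\oplus Q'\to \Omega^-Y'$ vanishes on the image of $Y$ and hence descends to the desired morphism $\Omega^-Y\to \Omega^-Y'$. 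Well-definedness on objects is handled by combining two competing inflations $Y\infl Q$ and $Y\infl Q'$ into the single inflation $Y\infl Q\oplus Q'$ above (with $g=i_{Y'}$) and observing that the two cokernels differ by projective summands, which vanish in $\un{\EE}$.

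The adjunction bijection is then constructed explicitly in both directions. Given $\alpha:\Omega^-Y\to W$, the composite $\alpha\circ\pi:Q\to W$ (where $\pi:Q\defl\Omega^-Y$) lifts through the deflation $P_W\defl W$ by projectivity of $Q$ to some $\tilde{\alpha}:Q\to P_W$, and $\tilde{\alpha}\circ i_Y:Y\to P_W$ is killed by $\pi_W$ and hence factors through $\Omega W\infl P_W$, producing the desired map $Y\to\Omega W$. Conversely, given $\beta:Y\to\Omega W$, the inflation $\binom{i_Y}{i_W\beta}:Y\infl Q\oplus P_W$ into the projective $Q\oplus P_W$ is a valid presentation of $\Omega^-Y$, and the projection $(0,\pi_W):Q\oplus P_W\to W$ vanishes on $Y$ and descends to a morphism $\Omega^-Y\to W$.

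The main obstacle, and where most of the care has to be spent, is verifying that all choices (lifts in $\EE$, choices of inflations, and the inherent ambiguity in $\Omega^-\un{Y}$) become irrelevant upon passage to $\un{\EE}$, and that the two Hom-constructions above are mutually inverse and natural. Each such verification reduces to a diagram chase in $\EE$ where the indeterminacy factors through projectives and therefore disappears in the stable category; the Hom-noetherian $R$-linearity is used implicitly to keep the constructions within $\EE$. Once this is in place, $\Omega^-$ is a left adjoint to $\Omega$, and the unit being an isomorphism by construction forces it to be fully faithful.
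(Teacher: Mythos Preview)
There is a genuine gap: your construction of $\Omega^-$ on objects is not well-defined when $i_Y$ is an \emph{arbitrary} inflation into a projective, and the claim that ``the two cokernels differ by projective summands'' fails in general. If $i:Y\infl Q$ and $i':Y\infl Q'$ are two such inflations, the comparison conflation $0\to Q'\to\coker\!\binom{i}{i'}\to\coker(i)\to 0$ is the pushout of $0\to Y\to Q\to\coker(i)\to 0$ along $i'$, and this splits if and only if $i'$ factors through $i$; nothing in your setup forces that. For an explicit failure take $\Lambda=k\langle x,y\rangle/(x^2,y^2,yx)$, $\EE=\mod\Lambda$, and $Y$ the simple module: the inflations $1\mapsto x$ and $1\mapsto xy$ into $\Lambda$ have $3$-dimensional cokernels of Loewy length $3$ and $2$ respectively, neither with a projective summand, so they are not stably isomorphic. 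The same defect propagates to your functoriality and adjunction steps, each of which tacitly assumes that replacing $i_Y$ by $\binom{i_Y}{g}$ leaves the stable cokernel unchanged.

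This is precisely where the Hom-noetherian hypothesis enters --- not ``implicitly to keep the constructions within $\EE$'' as you write. The paper takes $i_Y$ to be a \emph{left $(\add P)$-approximation} of $Y$; such a map exists because $\EE(Y,P)$ is finitely generated over $R$, and it is an inflation because some inflation $Y\infl P'$ with $P'\in\add P$ must factor through it (here idempotent completeness is used). The approximation property then guarantees that \emph{every} map from $Y$ into a projective factors through $i_Y$, and this single fact makes all the comparison sequences split, yields functoriality directly, and lets the adjunction verification go through. In the example above, $1\mapsto x$ is in fact a left approximation while $1\mapsto xy$ is not, which is exactly why one choice gives the correct $\Omega^-\un{Y}$ and the other does not.
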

\begin{proof}
For each object $W\in\Omega \EE$, take a left $(\add P)$-approximation $f:W \to P^W$ (this is possible since $\EE$ is Hom-noetherian). Since $W$ is in $\Omega\EE$, there exists an inflation $f':W \infl P'$ with $P'\in \add P$. Thus $f'$ factors through $f$, which implies that $f$ is an inflation by the idempotent completeness of $\EE$ (\cite[Proposition 7.6]{buhler}).
Now we define $\Omega^- W\in\EE$ by the following conflation:
\[
0 \to W \xrightarrow{f} P^W \xrightarrow{g} \Omega^- W \to 0.
\]

Next suppose that we have a map $\varphi:W_1 \to W_2$ in $\Omega\EE$. This induces the commutative diagram
\[
\begin{tikzcd}
0 \arrow[r] & W_1 \arrow[r,"f_1"] \arrow[d,"\varphi"] & P^{W_1} \arrow[r,"g_1"] \arrow[d,"\psi"] & \Omega^- W_1 \arrow[r]\arrow[d,"\Omega^- \varphi"] & 0\\
0 \arrow[r] & W_2 \arrow[r,"f_2"] & P^{W_2} \arrow[r,"g_2"] & \Omega^- W_2 \arrow[r]& 0
\end{tikzcd}
\]
in $\EE$ where both rows are conflations, since $f_2 \circ \varphi$ must factor through $f_1$. A simple diagram chase shows that this defines a well-defined isomorphism $\Omega^-:\un{\EE}(W_1,W_2) \to \un{\EE}(\Omega^- W_1, \Omega^- W_2)$, with its inverse induced by the syzygy functor $\Omega:\un{\EE} \to \Omega \un{\EE}$. Thus we obtain a functor $\Omega^-: \Omega\un{\EE} \to \un{\EE}$, which is fully faithful and satisfies $\Omega \Omega^- \iso \id_{\Omega \un{\EE}}$.

Finally we show that $\Omega^-:\Omega\un{\EE} \to \un{\EE}$ is a left adjoint of $\Omega:\un{\EE} \to \Omega\un{\EE}$. Let $W$ be in $\Omega\EE$ and $X$ in $\EE$. Then the syzygy functor induces a morphism $\un{\EE}(\Omega^- W,X) \to \un{\EE}(W,\Omega X)$ by $\Omega \Omega^- W \iso W$ in $\un{\EE}$. It is easy to show that this map is bijective, by considering the following diagram
\[
\begin{tikzcd}
0 \arrow[r] & W \arrow[r,"f"] \arrow[d] & P^W \arrow[r,"g"] \arrow[d] & \Omega^- W \arrow[r]\arrow[d] & 0\\
0 \arrow[r] & \Omega X \arrow[r]& P_X \arrow[r]& X \arrow[r]& 0
\end{tikzcd}
\]
where $P_X$ is projective and $f$ is a left $\add P$-approximation. The details are left to the reader.
\end{proof}

\begin{theorem}\label{omega}
Let $R$ be a complete noetherian local ring and $\EE$ a Hom-noetherian idempotent complete exact $R$-category with a projective generator $P$ and AR conflations. Suppose that $\AR(\EE)\otimes_\Z \Q = \Ex(\EE) \otimes_\Z \Q$ holds. Then the following holds.
\begin{enumerate}
\item For every object $M$ in $\eff\EE$, we have that $\supp M \cap \ind (\Omega\un{\EE})$ is finite, where the support is considered inside $\ind\un{\EE}$.
\item Assume in addition that $\un{\EE}$ has a weak cogenerator. Then $\Omega \EE$ is of finite type.
\end{enumerate}
\end{theorem}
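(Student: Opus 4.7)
The plan is to prove (1) by associating to each indecomposable $\un{W} \in \ind(\Omega\un{\EE})$ an Euler-type identity
\[
\length_R M(\un{W}) + \length_R M(\Omega^-\un{W}) = \phi_{\un{W}}\bigl([A] - [B] + [C]\bigr)
\]
valid for every conflation $0 \to A \to B \to C \to 0$ in $\EE$ with cokernel $M \in \eff\EE$, where $\phi_{\un{W}} \colon \KKK_0(\EE, 0) \to \Z$ is a group homomorphism; then to feed the rational relation $\AR(\EE) \otimes_\Z \Q = \Ex(\EE) \otimes_\Z \Q$ into this identity so that it forces $M(\un{W}) = 0$ off a finite exception set. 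Part (2) will follow by applying (1) to the effaceable module $\un{\EE}(-, \un{X})$ associated to a weak cogenerator $\un{X}$.

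First, since $\EE$ has AR conflations, Lemma \ref{homfin} shows that $\un{\EE}$ is Hom-finite over $R$, so the assignment $[X] \mapsto \length_R \un{\EE}(\un{W}, \un{X})$ extends linearly to a homomorphism $\phi_{\un{W}}\colon \KKK_0(\EE,0) \to \Z$. Next I show that for any conflation $0 \to A \to B \to C \to 0$ in $\EE$ with cokernel $M$ and any $\un{W} \in \un{\EE}$, the natural sequence
\[
0 \to N(\un{W}) \to \un{\EE}(\un{W}, \un{A}) \to \un{\EE}(\un{W}, \un{B}) \to \un{\EE}(\un{W}, \un{C}) \to M(\un{W}) \to 0
\]
of finite-length $R$-modules is exact, where $N := \ker\bigl(\un{\EE}(-, \un{A}) \to \un{\EE}(-, \un{B})\bigr)$ in $\Mod\un{\EE}$. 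Exactness at the ends and at $\un{\EE}(\un{W}, \un{C})$ is by definition of $N$ and $M$; middle exactness at $\un{\EE}(\un{W}, \un{B})$ is the only non-formal step, and follows by correcting a morphism $f\colon W \to B$ with $gf$ factoring through a projective $P$: lift the projective part along the deflation $g\colon B \to C$ and subtract, so that the corrected map kills $g$ in $\EE$ and hence factors through the inflation $A \to B$. The alternating length sum then gives $\length_R M(\un{W}) + \length_R N(\un{W}) = \phi_{\un{W}}([A]-[B]+[C])$.

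The main obstacle is the identification $N(\un{W}) \cong M(\Omega^-\un{W})$ for $\un{W} \in \Omega\un{\EE}$; this is where Lemma \ref{omega-} plays its essential role. I choose any lift $V \in \EE$ of $\Omega^-\un{W} \in \un{\EE}$, so that $\Omega\un{V} \cong \un{W}$ by the unit isomorphism of Lemma \ref{omega-}, and hence $\Ext^1_\EE(V, -) \cong \un{\EE}(\un{W}, -)$ via dimension shift. Reading off the connecting map in the long exact sequence of $\Ext_\EE(V, -)$ applied to the conflation gives
\[
N(\un{W}) = \ker\bigl(\un{\EE}(\un{W}, \un{A}) \to \un{\EE}(\un{W}, \un{B})\bigr) \cong \coker\bigl(\EE(V, B) \to \EE(V, C)\bigr) = M(V) = M(\Omega^-\un{W}),
\]
where the last equality uses that $M$ is effaceable and therefore factors through $\un{\EE}$. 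Plugging this into the Euler identity and applying $\phi_{\un{W}}$ to a rational AR relation
\[
a\bigl([X] - [Y] + [Z]\bigr) = \sum_{i=1}^n b_i \bigl([X_i] - [Y_i] + [Z_i]\bigr)
\]
(with $a > 0$, $b_i \in \Z$ and $(X_i \infl Y_i \defl Z_i)$ AR conflations with simple cokernels $S_i = S_{Z_i}$) yields
\[
a\bigl(\length M(\un{W}) + \length M(\Omega^-\un{W})\bigr) = \sum_i b_i \bigl(\length S_i(\un{W}) + \length S_i(\Omega^-\un{W})\bigr).
\]
Since $\length S_i(\un{U}) \neq 0$ only when $\un{U} \cong \un{Z_i}$, the right side vanishes whenever neither $\un{W}$ nor $\Omega^-\un{W}$ belongs to $\{\un{Z_1}, \dots, \un{Z_n}\}$, and then $a > 0$ together with non-negativity of length forces $M(\un{W}) = 0$. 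Consequently $\supp M \cap \ind(\Omega\un{\EE})$ is contained in the finite set $\{\un{Z_i}\} \cup \{\Omega\un{Z_i}\}$, which establishes (1).

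For (2), apply (1) to $M := \un{\EE}(-, \un{X}) \in \eff\EE$ (the representability in $\mod\un{\EE}$ places $M$ in $\eff\EE$ via Lemma \ref{effstable}). The weak-cogenerator property of $\un{X}$ gives $\supp M = \ind\un{\EE}$, so (1) immediately yields that $\ind(\Omega\un{\EE})$ itself is finite. Combined with the finitely many indecomposable summands of the projective generator $P$ and the standard bijection between non-projective indecomposables of $\Omega\EE$ and those of $\Omega\un{\EE}$, this shows that $\Omega\EE$ has only finitely many indecomposables and hence is of finite type.
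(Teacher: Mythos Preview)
Your argument is correct and follows essentially the same strategy as the paper: both rely on the Hom-finiteness of $\un{\EE}$ (Lemma~\ref{homfin}), the exact sequence $0 \to N \to \un{\EE}(-,\un{A}) \to \un{\EE}(-,\un{B}) \to \un{\EE}(-,\un{C}) \to M \to 0$ in $\mod\un{\EE}$, and the crucial identification $N(\un{W}) \cong M(\Omega^{-}\un{W})$ for $\un{W}\in\Omega\un{\EE}$ coming from Lemma~\ref{omega-}. The paper packages this as a weakened condition (CF)$_\Omega$ and then reruns the reduction of Theorem~\ref{fcfl}, whereas you compute the Euler-type invariant $\phi_{\un{W}}$ directly on the rational AR relation; these two routes are interchangeable and Part~(2) is handled identically.

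One caveat on your derivation of $N(\un{W}) \cong M(\Omega^{-}\un{W})$: the isomorphism $\Ext^1_\EE(V,-) \cong \un{\EE}(\un{W},-)$ is \emph{not} a general ``dimension shift'' identity in an exact category with enough projectives (for instance it fails whenever $\Omega V$ is projective but $\Ext^1(V,-)\neq 0$). It holds here precisely because the conflation $0 \to W \to P^W \to \Omega^{-}W \to 0$ from Lemma~\ref{omega-} is built from a \emph{left} $(\add P)$-approximation $W\to P^W$, which forces the image of $\EE(P^W,A)\to\EE(W,A)$ to coincide with the ideal of maps factoring through projectives. A cleaner route, and the one the paper takes, is to use the presentation $\un{\EE}(-,\Omega\un{B}) \to \un{\EE}(-,\Omega\un{C}) \to N \to 0$ and apply the adjunction $\un{\EE}(\un{W},\Omega-) \cong \un{\EE}(\Omega^{-}\un{W},-)$ of Lemma~\ref{omega-} directly.
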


\begin{proof}
Throughout this proof, supports of functors are always considered inside $\ind\un{\EE}$.

(1)
First recall that the stable category $\un{\EE}$ is Hom-finite over $R$ by Lemma \ref{homfin}.
By  the similar argument as in Proposition \ref{fcfl}, it suffices to prove the following weaker version of (CF).

\begin{itemize}
\item[(CF)$_\Omega$] Let $0 \to X \xrightarrow{f_i} Y \xrightarrow{g_i} Z \to 0$ be conflations in $\EE$ for $i=1,2$ and put $M_i := \coker \EE(-,g_i) \in \eff\EE$. If $\supp M_1$ is finite, then $\supp M_2 \cap \ind(\Omega\un{\EE})$ is finite.
\end{itemize}
Assume the above situation and put $N_i := \ker \un{\EE}(-,\un{f_i})$ for each $i=1,2$. Then we have exact sequences in $\mod\un{\EE}$ for $i=1,2$, as in the proof of Proposition \ref{glfc}(2):
\begin{align*}
0 \to N_i \to \un{\EE}(-,\un{X}) \xrightarrow{\un{\EE}(-,\un{f_i})} \un{\EE}(-,\un{Y}) &\xrightarrow{\un{\EE}(-,\un{g_i})} \un{\EE}(-,\un{Z}) \to M_i \to 0, \\
\un{\EE}(-,\Omega\un{Y}) &\xrightarrow{\un{\EE}(-,\Omega \un{g_i})} \un{\EE}(-,\Omega\un{Z}) \to N_i \to 0.
\end{align*}
As in the proof of Proposition \ref{glfc}(2), it is enough to show that $\supp N_1 \cap \ind(\Omega\un{\EE})$ is finite. Actually we will show that every element of $\supp N_1 \cap \ind(\Omega\un{\EE})$ is isomorphic to $\Omega\un{A}$ for some $\un{A} \in \supp M_1$, which obviously implies the desired claim.

Let $\un{W}$ be an element of $\supp N_1 \cap \ind(\Omega\un{\EE})$, so $\un{\EE}(\un{W},\Omega \un{g_1}): \un{\EE}(\un{W},\Omega \un{Y_1}) \to \un{\EE}(\un{W},\Omega \un{Z_1})$ is not surjective. By Lemma \ref{omega-}, this is equivalent to that $\un{\EE}(\Omega^- \un{W},\un{g_1}) : \un{\EE}(\Omega^-\un{W},\un{Y_1}) \to \un{\EE}(\Omega^- \un{W}, \un{Z_1})$ is not surjective. On the other hand, since $\Omega^- :\Omega\EE \to \EE$ is fully faithful by Lemma \ref{omega-}, we have that $\Omega^- \un{W}$ is indecomposable. Thus $\Omega^- \un{W}$ belongs to $\supp M_1$. Now $\un{W} \iso \Omega \Omega^- \un{W}$ in $\un{\EE}$ holds, which completes the proof.

(2)
Let $\un{X}$ be a weak cogenerator of $\un{\EE}$. By (1), we have that that $\ind (\Omega\un{\EE}) \cap \supp \un{\EE}(-,\un{X})$ is finite. Since every indecomposable object $\un{W}$ in $\un{\EE}$ should satisfy  $\un{\EE}(\un{W},\un{X})\neq 0$, it follows that $\Omega\un{\EE}$ is of finite type. Now $\Omega\EE$ itself is of finite type because we have a natural identification $\ind(\Omega\EE) = \ind(\Omega \un{\EE}) \sqcup \ind (\add P)$.
\end{proof}
Now we apply this theorem to the category of Cohen-Macaulay modules. The obtained result extends \cite{hir,ko}, where $\Lambda$ was assumed to be commutative.
\begin{corollary}\label{syzygymain}
Let $R$ be a complete Cohen-Macaulay local ring and $\Lambda$ an $R$-order with at most an isolated singularity. If $\AR(\CM \Lambda)\otimes_\Z \Q = \Ex(\CM \Lambda) \otimes_\Z \Q$ holds, then $\mathsf{\Omega CM}\,\Lambda$ is of finite type.
\end{corollary}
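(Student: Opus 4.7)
The plan is to deduce this corollary as a direct application of Theorem \ref{omega}(2) to the exact category $\EE := \CM\Lambda$. So the work consists entirely of verifying the hypotheses of that theorem in the present setting.

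First I would check the standing assumptions on $\EE$. Since $\Lambda$ is an $R$-order, $\Lambda$ belongs to $\CM\Lambda$, so $\EE = \CM\Lambda$ has a projective generator $\Lambda$. Because $R$ is noetherian and $\Lambda$ is module-finite over $R$, every $\Hom$-set $\CM\Lambda(M,N)$ is a finitely generated $R$-module, so $\EE$ is $\Hom$-noetherian over $R$. The category $\CM\Lambda$ is extension-closed in $\mod\Lambda$ (Example \ref{cmcontfin}), is Krull--Schmidt over the complete local ring $R$, and is therefore idempotent complete. The isolated singularity assumption on $\Lambda$ is exactly what is needed to ensure that $\EE$ has AR conflations by the classical theorem of Auslander cited in the excerpt (\cite{isolated}).

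Next I would verify the existence of a weak cogenerator of $\un{\EE}$. Since $R$ is complete local, $\Lambda$ is semiperfect, and by Example \ref{cmcontfin} the subcategory $\CM\Lambda$ is a contravariantly finite resolving subcategory of $\mod\Lambda$. Proposition \ref{weakcogen} then gives a weak cogenerator of $\un{\CM}\,\Lambda$, namely (the stable class of) any right $\CM\Lambda$-approximation of $\Lambda/\rad\Lambda$.

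With the hypothesis $\AR(\CM\Lambda)\otimes_\Z\Q = \Ex(\CM\Lambda)\otimes_\Z\Q$ taken from the statement, every assumption of Theorem \ref{omega} is in place, so part (2) of that theorem yields that $\Omega\EE = \mathsf{\Omega CM}\,\Lambda$ is of finite type. There is no real obstacle here; the only delicate point worth double-checking is that the definition of $\Omega\EE$ used in Section \ref{sec5} (objects admitting an inflation into a projective) agrees with the usual notion of ``syzygies of Cohen--Macaulay $\Lambda$-modules'' appearing in the statement, which is immediate because every $M \in \CM\Lambda$ embeds into a free $\Lambda$-module via a projective cover of its dual or, more directly, because $\CM\Lambda$ has enough projectives and the deflation $P \defl M$ produces the required inflation $\Omega M \infl P$.
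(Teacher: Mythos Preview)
Your proposal is correct and follows exactly the paper's approach: the paper's proof is the single line ``We can apply Theorem~\ref{omega} to $\EE:= \CM\Lambda$ since $\un{\EE}$ has a weak cogenerator by Example~\ref{cmcontfin},'' and you have simply unpacked the verification of the hypotheses of Theorem~\ref{omega}(2) (Hom-noetherian, idempotent complete, projective generator, AR conflations via \cite{isolated}, weak cogenerator via Proposition~\ref{weakcogen}/Example~\ref{cmcontfin}). Your closing remark on the identification of $\Omega\EE$ with the usual syzygy category is a reasonable sanity check, already implicit in the paper's setup at the start of Section~\ref{sec5}.
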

\begin{proof}
We can apply Theorem \ref{omega} to $\EE:= \CM\Lambda$ since $\un{\EE}$ has a weak cogenerator by Example \ref{cmcontfin}.
\end{proof}

\appendix
\section{Some properties of the category of effaceable functors}
First we show that our notion of effaceability is equivalent to that of Grothendieck (see \cite[p. 148]{gro} or \cite[Exercise 2.4.5]{weibel} for the standard definition).

\begin{proposition}\label{grodef}
Let $\EE$ be an idempotent complete exact category and $M \in \Mod\EE$ a right $\EE$-module. Then the following are equivalent.
\begin{enumerate}
\item $M$ is in $\eff \EE$.
\item $M$ is finitely presented, and for every $W \in \EE$ and $w\in M(W)$, there exists a deflation $\psi: E \defl W$ in $\EE$ such that $(M\psi )(w)=0$.
\end{enumerate}
\end{proposition}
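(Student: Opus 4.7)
The plan is to prove the two implications separately, using the pullback axiom of exact categories for (1) $\Rightarrow$ (2), and a direct-sum trick that promotes an arbitrary presenting map to a deflation for (2) $\Rightarrow$ (1).

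For (1) $\Rightarrow$ (2), I would start from an exact sequence $\EE(-,Y) \xrightarrow{\EE(-,f)} \EE(-,Z) \to M \to 0$ with $f: Y \defl Z$ a deflation; finite presentation of $M$ is then immediate. Given $W \in \EE$ and $w \in M(W)$, I would lift $w$ to some $\alpha: W \to Z$ via the surjection $\EE(W,Z) \defl M(W)$, then pull $f$ back along $\alpha$ to obtain a deflation $\psi: W' \defl W$ together with $\beta: W' \to Y$ satisfying $f\beta = \alpha\psi$. Then $(M\psi)(w)$ is represented by $\alpha\psi = f\beta$, which lies in the image of $\EE(W',f)$ and hence vanishes in $M(W')$.

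For (2) $\Rightarrow$ (1), fix any finite presentation $\EE(-,Y) \xrightarrow{\EE(-,h)} \EE(-,Z) \to M \to 0$ and apply (2) to the distinguished element $w \in M(Z)$ that is the image of $\id_Z$ under $\EE(Z,Z) \defl M(Z)$. This produces a deflation $\psi: E \defl Z$ with $(M\psi)(w) = 0$, which by Yoneda means $\psi \in \im \EE(E,h)$, i.e.\ $\psi = h\beta$ for some $\beta: E \to Y$. I would then consider the morphism $[h,\psi]: Y \oplus E \to Z$ and verify two claims: it is a deflation, and $\coker \EE(-,[h,\psi]) = M$. The first holds because $[h,\psi]$ factors as
\[
Y \oplus E \xrightarrow{\id_Y \oplus \psi} Y \oplus Z \xrightarrow{[h,\id_Z]} Z,
\]
a direct sum of deflations (the identity and $\psi$) followed by a split deflation, hence a deflation. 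The second follows from the identity $[h,\psi](y,e) = hy + h\beta e = h(y+\beta e)$, which shows that the images of $\EE(-,h)$ and $\EE(-,[h,\psi])$ inside $\EE(-,Z)$ coincide, so their cokernels agree.

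The main technical point is the (2) $\Rightarrow$ (1) direction: given only that every element of $M$ can be killed by some deflation, one has to manufacture a single deflation presenting all of $M$. Replacing $Y$ by $Y \oplus E$ and exploiting the factorization $\psi = h\beta$ simultaneously guarantees that the new presenting map is a deflation and that its image inside $\EE(-,Z)$ does not grow; either conclusion could fail without this compatibility.
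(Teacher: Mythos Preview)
Your proof is correct. The $(1)\Rightarrow(2)$ direction is identical to the paper's: lift $w$ to a map into $Z$ and pull back the presenting deflation.

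For $(2)\Rightarrow(1)$ you take a genuinely different route. The paper observes that once $\psi = h\beta$ with $\psi$ a deflation, the \emph{original} presenting map $h:Y\to Z$ is already a deflation, because in an idempotent complete exact category any morphism with a deflation as a right factor is itself a deflation (\cite[Proposition~7.6]{buhler}, the ``obscure axiom''). You instead enlarge the domain and exhibit $[h,\psi]:Y\oplus E\to Z$ as a new presenting deflation, checking separately that it is a deflation (as a composite of $\id_Y\oplus\psi$ and a split epimorphism) and that its image in $\EE(-,Z)$ agrees with that of $\EE(-,h)$. Your argument is slightly longer but more elementary: it uses only that direct sums of conflations are conflations, that split epimorphisms are deflations, and that deflations compose---all of which hold in an arbitrary exact category. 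In particular your $(2)\Rightarrow(1)$ does not invoke idempotent completeness at all, whereas the paper's does. The paper's route, on the other hand, yields the marginally sharper conclusion that any presenting map of an effaceable functor is already a deflation.
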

\begin{proof}
(1) $\Rightarrow$ (2):
Suppose that $M$ is in $\eff\EE$ and take a conflation $0 \to X \to Y \xrightarrow{g} Z \to 0$ in $\EE$ such that $M \iso \coker \EE(-,g)$. Denote by $\pi:\EE(-,Z) \defl M$ the natural projection. Clearly $M$ is finitely presented. Let $W\in\EE$ and $w \in M(W)$. By the Yoneda lemma, we have a morphism $w_* :\EE(-,W) \to M$. By the projectivity of $\EE(-,W)$ and the Yoneda lemma, we have a map $\varphi : W \to Z$ such that $\pi \circ \EE(-,\varphi) = w_*$.
Now let us take the pullback of $g$ along $\varphi$:
\[
\begin{tikzcd}
0 \arrow[r]& X \arrow[d,equal] \arrow[r]& E \arrow[d] \arrow[r,"\psi"] & W \arrow[r] \arrow[d,"\varphi"] & 0 \\
0 \arrow[r]& X \arrow[r]& Y \arrow[r,"g"] & Z \arrow[r]& 0
\end{tikzcd}
\]
Note that $\psi$ is a deflation. This yields the following commutative diagram in $\Mod\EE$:
\[
\begin{tikzcd}
&  & \EE(-,E) \arrow[r,"{\EE(-,\psi)}"]\arrow[d] & \EE(-,W) \arrow[d,"{\EE(-,\varphi)}"'] \arrow[dr,"w_*"] \\
0 \arrow[r]& \EE(-,X) \arrow[r]& \EE(-,Y) \arrow[r,"{\EE(-,g)}"']& \EE(-,Z) \arrow[r,"\pi"']& M \arrow[r]& 0
\end{tikzcd}
\]
By applying these functors to $E$, it is straightforward to check that $(M\psi )(w)=0$.

(2) $\Rightarrow$ (1):
Suppose that $M$ satisfies the condition of (2). Since $M$ is finitely presented, we have a morphism $g:Y\to Z$ in $\EE$ such that $\EE(-,Y) \to \EE(-,Z) \to M \to 0$ is exact. We have an element $z \in M(Z)$ which corresponds to $\EE(-,Z) \to M$ by the Yoneda lemma. By the assumption, there exists a deflation $\psi: E \defl Z$ such that $(M \psi)(z)=0$. The Yoneda lemma implies that the composition $\EE(-,E) \to \EE(-,Z) \to M $ is a zero map, hence the map $\EE(-,\psi)$ lifts to $\EE(-,Y)$. Thus we have a map $h:E \to Y$ such that $g \circ h = \psi$. Since $\psi$ is a deflation, we can conclude that so is $g$ by the idempotent completeness of $\EE$. Therefore $M$ is in $\eff\EE$.
\end{proof}

Next we prove that $\eff\EE$ is an abelian subcategory of $\Mod\EE$.
\begin{theorem}\label{effabelian}
Let $\EE$ be an exact category. Then $\eff\EE$ is closed under kernels and cokernels in $\Mod\EE$. In particular, $\eff\EE$ is an abelian category such that the inclusion $\eff\EE \to \Mod\EE$ is exact. Moreover, $\eff\EE$ is closed under extensions in $\Mod\EE$.
\end{theorem}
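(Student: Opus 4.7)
My plan is to verify the three closure properties directly from the definition: $M \in \eff\EE$ precisely when there is a deflation $g \colon Y \defl Z$ in $\EE$ with $M \iso \coker \EE(-,g)$. Throughout I will use projectivity of representables in $\Mod\EE$, the Yoneda lemma, and three basic facts about exact categories: finite direct sums of deflations are deflations, deflations are stable under pullback, and compositions of deflations are deflations (so in particular split epimorphisms are deflations). For closure under cokernels, let $f \colon M \to N$ be a morphism in $\eff\EE$ with presentations via deflations $g_M \colon Y_M \defl Z_M$ and $g_N \colon Y_N \defl Z_N$. Projectivity of $\EE(-,Z_M)$ lifts $f$ to $\EE(-,\beta)$ for some $\beta \colon Z_M \to Z_N$, and a direct computation shows $\coker f \iso \coker \EE(-,[g_N,\beta])$, where $[g_N,\beta] \colon Y_N \oplus Z_M \to Z_N$ is a deflation since it factors as the split epimorphism $[1_{Z_N},\beta] \colon Z_N \oplus Z_M \to Z_N$ after the deflation $g_N \oplus 1_{Z_M}$.

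Closure under kernels is the main step. Set $K = \ker f$ in the setup above. Since $g_N$ is a deflation, the pullback $Q := Z_M \times_{Z_N} Y_N$ exists in $\EE$ and the projection $p_1 \colon Q \defl Z_M$ is itself a deflation. An element chase via Yoneda shows that the composite $\EE(-,Q) \xrightarrow{\EE(-,p_1)} \EE(-,Z_M) \defl M$ has image exactly $K$. To realize $K$ as a cokernel, I form a second pullback $R := Q \times_{Z_M} Y_M$ along the deflation $g_M$, obtaining a deflation $q \colon R \defl Q$; a second element chase identifies the kernel of the surjection $\EE(-,Q) \defl K$ with the image of $\EE(-,q)$. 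Hence $\EE(-,R) \xrightarrow{\EE(-,q)} \EE(-,Q) \to K \to 0$ is exact and $K \iso \coker \EE(-,q) \in \eff\EE$. The main obstacle is the two-stage pullback bookkeeping, but each identification reduces to the universal property of pullbacks in $\EE$ combined with Yoneda.

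For abelianness, $\eff\EE$ is additive because $M_1 \oplus M_2 \iso \coker \EE(-, g_1 \oplus g_2)$ and direct sums of deflations are deflations; together with closure under kernels and cokernels (computed in $\Mod\EE$), this makes the inclusion $\eff\EE \hookrightarrow \Mod\EE$ exact and $\eff\EE$ abelian. For closure under extensions, given $0 \to M_1 \to M \to M_2 \to 0$ in $\Mod\EE$ with $M_i \in \eff\EE$, I apply the horseshoe lemma to the two-term presentations coming from deflations $g_i \colon Y_i \defl Z_i$. This yields a two-term projective presentation of $M$ whose syzygy map has the form $\EE(-,h)$ with $h \colon Y_1 \oplus Y_2 \to Z_1 \oplus Z_2$ of upper-triangular block shape having $g_1, g_2$ on the diagonal and a connecting morphism $\alpha \colon Y_2 \to Z_1$ above. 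Such an $h$ factors as $(1_{Z_1} \oplus g_2) \circ \tau \circ (g_1 \oplus 1_{Y_2})$, where $\tau$ is the upper-triangular automorphism of $Z_1 \oplus Y_2$ with off-diagonal entry $\alpha$; as a composition of two deflations and an isomorphism, $h$ is itself a deflation, so $M \iso \coker \EE(-,h) \in \eff\EE$.
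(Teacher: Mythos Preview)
Your argument is correct, but it takes a genuinely different route from the paper's. The paper lifts the morphism $\varphi\colon M_1 \to M_2$ to a map of the \emph{full} conflations $0 \to X_i \to Y_i \to Z_i \to 0$ (so in particular it uses the kernels $X_i$), forms the pushout of $f_1\colon X_1 \to Y_1$ along $a\colon X_1 \to X_2$, and extracts kernel, image, and cokernel simultaneously from one five-row diagram; for extension-closedness it simply cites \cite[Proposition~2.10]{en2}. You instead treat each closure property on its own and never touch the kernels $X_i$: the cokernel is obtained by enlarging the presenting deflation to $[g_N,\beta]$, the kernel by a two-step pullback producing the deflation $q\colon R\defl Q$, and extensions by factoring the horseshoe matrix as a composite of two diagonal deflations and a unipotent automorphism. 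What this buys you is a more modular and self-contained proof (in particular the extension argument stands on its own rather than appealing to an external reference), while the paper's approach is more uniform---one diagram yields the kernel, image, and cokernel at once---at the cost of having to set up the pushout and verify the resulting squares are bicartesian.
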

\begin{proof}
Let $\varphi:M_1 \to M_2$ be a morphism in $\eff\EE$. For each $i=1,2$, we have the corresponding conflations $0 \to X_i \xrightarrow{f_i} Y_i \xrightarrow{g_i} Z_i \to 0$ in $\EE$ satisfying $M_i = \coker \EE(-,g_i)$.
We will show that the image, kernel and cokernel of $\varphi$ in $\Mod\EE$ are contained in $\eff\EE$. Sine $\varphi$ induces a morphism between the projective resolutions of $M_1$ and $M_2$, we have the following commutative diagram in $\EE$ by the Yoneda lemma:
\[
\begin{tikzcd}
0 \arrow[r]& X_1 \arrow[r,"f_1"] \arrow[d,"a"] & Y_1 \arrow[r,"g_1"] \arrow[d,"b"] & Z_1 \arrow[r] \arrow[d,"c"] & 0 \\
0 \arrow[r]& X_2 \arrow[r,"f_2"] & Y_2 \arrow[r,"g_2"] & Z_2 \arrow[r] & 0
\end{tikzcd}
\]
Take the pushout $E$ of the top conflation along $a:X_1 \to X_2$. By the universal property, we obtain the following commutative diagram
\begin{equation}\label{kore}
\begin{tikzcd}
0 \arrow[r]& X_1 \arrow[d,equal] \arrow[r, "{^t [-a,f_1]}"] & X_2 \oplus Y_1 \arrow[r,"{[f,b_1]}"] \arrow[d,"{[0,1]}"] & E \arrow[r]\arrow[d,"g"] & 0\\
0 \arrow[r]& X_1 \arrow[r,"f_1"] \arrow[d,"a"] & Y_1 \arrow[r,"g_1"] \arrow[d,"b_1"] & Z_1 \arrow[r] \arrow[d,equal] & 0 \\
0 \arrow[r]& X_2 \arrow[r,"f"] \arrow[d,equal] & E \arrow[r,"g"] \arrow[d,"b_2"] & Z_1 \arrow[r]\arrow[d,"c"] & 0 \\
0 \arrow[r]& X_2 \arrow[r,"f_2"]\arrow[d,"f"] & Y_2 \arrow[r,"g_2"]\arrow[d,"{^t [1,0]}"] & Z_2 \arrow[r] \arrow[d,equal] & 0\\
0 \arrow[r]& E \arrow[r,"{^t [b_2,g]}"] & Y_2 \oplus Z_1 \arrow[r,"{[g_2,-c]}"] & Z_2 \arrow[r]& 0
\end{tikzcd}
\end{equation}
in $\EE$ such that $b_2 \circ b_1 = b$ holds. It is straightforward to see that diagram (\ref{kore}) commutes. Moreover, all the rows are conflations, and the top-right square and the bottom-left square are pullback-pushout squares, see \cite[Proposition 2.12]{buhler}

By the Yoneda embedding, we obtain the following commutative diagram in $\Mod\EE$ corresponding to (\ref{kore}), where all the rows are exact.

\[
\begin{tikzcd}
0 \arrow[r] & \EE(-,X_1) \arrow[r]\arrow[equal,d]& \EE(-,X_2 \oplus Y_1) \arrow[r] \arrow[d] & \EE(-,E) \arrow[r] \arrow[d] & K \arrow[r]\arrow[d,"\iota"] & 0 \\
0 \arrow[r]& \EE(-,X_1) \arrow[r] \arrow[d] & \EE(-,Y_1) \arrow[r]\arrow[d] & \EE(-,Z_1) \arrow[r] \arrow[equal,d] & M_1 \arrow[r]\arrow[d,"\varphi_1"] & 0 \\
0 \arrow[r]& \EE(-,X_2) \arrow[r] \arrow[equal,d] & \EE(-,E) \arrow[r]\arrow[d] & \EE(-,Z_1) \arrow[r] \arrow[d] & M \arrow[r] \arrow[d,"\varphi_2"] & 0 \\
0 \arrow[r]& \EE(-,X_2) \arrow[r]\arrow[d] & \EE(-,Y_2) \arrow[r]\arrow[d] & \EE(-,Z_2) \arrow[r]\arrow[equal,d]  & M_2 \arrow[r]\arrow[d,"\pi"] & 0 \\
0 \arrow[r]& \EE(-,E) \arrow[r]& \EE(-,Y_2 \oplus Z_1) \arrow[r]& \EE(-,Z_2) \arrow[r]& C \arrow[r]& 0
\end{tikzcd}
\]
We have that $K$, $M$ and $N$ belong to $\eff\EE$. We can check that $\varphi_1$ is surjective and $\varphi_2$ is injective, hence $M$ is a image of $\varphi$. Moreover, one can show that $\iota$ is a kernel of $\varphi_1$ (or equivalently, $\varphi$) and $\pi$ is a cokernel of $\varphi_2$ (or equivalently $\varphi$). We leave the details to the reader.

For the extension-closedness of $\eff\EE$, we refer the reader to \cite[Proposition 2.10]{en2}.
\end{proof}

In this paper, we often deal with the finiteness of length of effaceable $\EE$-modules. Since $\eff\EE$ and $\Mod\EE$ are both abelian, the length of effaceable modules seems to depend on the ambient category we adopt. This is not the case by the following.
\begin{proposition}\label{onaji}
Let $\EE$ be an exact category and $M$ an object in $\eff\EE$.
\begin{enumerate}
\item $M$ has finite length in $\eff\EE$ if and only if it does so in $\Mod\EE$.
\item Suppose that $\mod\EE$ is abelian. Then $M$ has finite length in $\Mod\EE$ if and only if it does so in $\mod\EE$.
\end{enumerate}
Moreover, the length and the composition factors of $M$ in $\Mod\EE$ coincide with those in $\eff\EE$ and in $\mod\EE$.
\end{proposition}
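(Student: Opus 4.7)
The plan is to exploit two facts: (i) $\Mod\EE$ is a Grothendieck category, so any object of finite length there is Noetherian, and in particular every subobject of it is finitely generated; (ii) by Proposition~\ref{serre}(1), the subcategory $\eff\EE$ is closed in $\Mod\EE$ under passing to a subobject $M_1 \subseteq M$ whenever $M \in \eff\EE$ and $M_1$ is finitely generated. Granted these, the entire statement will reduce to checking that ``simple'' is an absolute notion across the three categories.

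First I would show that an object $S \in \eff\EE$ is simple in $\eff\EE$ if and only if it is simple in $\Mod\EE$. The ``if'' direction is immediate because $\eff\EE \hookrightarrow \Mod\EE$ is a fully faithful exact functor by Theorem~\ref{effabelian}. For the converse, suppose $0 \neq S' \subsetneq S$ in $\Mod\EE$; picking a nonzero element of $S'(X)$ for some $X \in \EE$ produces a nonzero finitely generated submodule $S'' := \im(\EE(-,X) \to S) \subseteq S'$. Applying Proposition~\ref{serre}(1) to $0 \to S'' \to S \to S/S'' \to 0$ then forces $S'' \in \eff\EE$, contradicting simplicity of $S$ in $\eff\EE$.

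For part (1), suppose $M \in \eff\EE$ admits a composition series $0 = M_0 \subsetneq \cdots \subsetneq M_n = M$ in $\Mod\EE$. Since $M$ is Noetherian in $\Mod\EE$, every $M_i$ is finitely generated. Applying Proposition~\ref{serre}(1) to $0 \to M_i \to M \to M/M_i \to 0$ places every $M_i$ and every quotient $M/M_i$ into $\eff\EE$, and each simple factor $M_{i+1}/M_i$, being a finitely generated subobject of $M/M_i \in \eff\EE$, also lies in $\eff\EE$ by the same proposition. By the previous step these factors are simple in $\eff\EE$, so the chain is a composition series of the same length and with the same factors in $\eff\EE$. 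Conversely, any composition series in $\eff\EE$ is at once one in $\Mod\EE$ by the equivalence of simples, and Jordan--H\"older then matches lengths and factors.

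Part (2) is obtained by running the identical argument with $\mod\EE$ in place of $\eff\EE$. The closure property needed is that, when $\mod\EE$ is abelian, every finitely generated submodule of an object of $\mod\EE$ again lies in $\mod\EE$: writing $N = \im(\EE(-,X) \to M)$ with $M \in \mod\EE$ gives $N = \EE(-,X)/K$, where $K$ is the kernel of the representable-to-$M$ map in $\Mod\EE$, and $K$ lies in $\mod\EE$ since the abelianness of $\mod\EE$ implies closure under kernels in $\Mod\EE$. The main obstacle throughout is justifying that composition series in $\Mod\EE$ refine to series inside the smaller category; this is handled uniformly by combining the Noetherian property of finite-length modules in $\Mod\EE$ with Proposition~\ref{serre}(1), after which the rest of the argument is formal.
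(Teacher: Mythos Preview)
Your proof is correct and follows essentially the same line as the paper's. The paper isolates the argument into a separate lemma (Lemma~\ref{elementary}) about an abelian subcategory $\AA \subseteq \Mod\EE$ with exact inclusion that is closed under finitely generated submodules, and then simply observes that both $\eff\EE$ (via Proposition~\ref{serre}(1) and Theorem~\ref{effabelian}) and $\mod\EE$ (via \cite{aus66}) satisfy this hypothesis; the content of that lemma is precisely your reduction to ``simples are absolute'' together with the Yoneda trick of passing to a nonzero finitely generated submodule. The only cosmetic difference is that for the direction ``finite length in $\Mod\EE$ $\Rightarrow$ finite length in the subcategory'' the paper just notes that the lattice of subobjects in $\AA$ injects into that in $\Mod\EE$, whereas you transport a composition series explicitly using the Noetherian observation; both are fine.
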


This follows from the following elementary observation.
\begin{lemma}\label{elementary}
Let $\EE$ be an additive category and $\AA$ an abelian subcategory of $\Mod\EE$ such that the inclusion $\AA \to \Mod\EE$ is exact. Suppose that $\AA$ is closed under finitely generated submodules. Then the following are equivalent for every object $M$ in $\AA$.
\begin{enumerate}
\item $M$ has finite length in $\Mod\EE$.
\item $M$ has finite length in $\AA$.
\end{enumerate}
Moreover, the length and the composition factors of $M$ in $\Mod\EE$ coincides with those in $\AA$.\end{lemma}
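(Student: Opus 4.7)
The plan is to reduce the claim to a single key observation---that an object $S \in \AA$ is simple in $\AA$ if and only if it is simple in $\Mod\EE$---and then transfer composition series across the inclusion using exactness.

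First I would prove the key observation. The backward direction is immediate, because any nonzero $\AA$-subobject of $S$ is in particular a nonzero $\Mod\EE$-submodule by exactness of the inclusion. For the forward direction, assume $S \in \AA$ is simple in $\AA$, and let $0 \ne S' \subseteq S$ be a $\Mod\EE$-submodule. Choose $W \in \EE$ and a nonzero element $s \in S'(W)$, and let $T$ be the image of the Yoneda map $\EE(-,W) \to S$ corresponding to $s$. Then $T$ is a finitely generated $\Mod\EE$-submodule of $S \in \AA$ with $s \in T(W)$, so by the closure hypothesis $T \in \AA$. Since $T \ne 0$, simplicity of $S$ in $\AA$ forces $T = S$, and hence $S' = S$. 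This is the step where the closure-under-finitely-generated-submodules hypothesis is essentially used, and it is the main (and really only) obstacle in the proof.

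With this observation at hand, the implication (2) $\Rightarrow$ (1) is straightforward. Given a composition series $0 = M_0 \subsetneq M_1 \subsetneq \cdots \subsetneq M_n = M$ in $\AA$, exactness of the inclusion $\AA \hookrightarrow \Mod\EE$ guarantees that the $M_i$ form a strictly increasing chain in $\Mod\EE$ and that each quotient $M_i/M_{i-1}$ taken in $\AA$ coincides with the corresponding quotient in $\Mod\EE$. By the key observation these quotients are simple in $\Mod\EE$, so we obtain a composition series in $\Mod\EE$ of the same length with the same composition factors.

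For (1) $\Rightarrow$ (2), given a composition series $0 = N_0 \subsetneq N_1 \subsetneq \cdots \subsetneq N_n = M$ in $\Mod\EE$, each $N_i$ is a submodule of the finite-length (hence Noetherian) module $M$, and so is finitely generated in $\Mod\EE$. Since $M \in \AA$, the closure hypothesis places every $N_i$ in $\AA$. Exactness of the inclusion then identifies $N_i/N_{i-1}$ computed in $\AA$ with the same quotient in $\Mod\EE$, and by the key observation these quotients are simple in $\AA$. Thus we have a composition series in $\AA$ of the same length with the same composition factors, and the ``moreover'' part follows from both directions simultaneously.
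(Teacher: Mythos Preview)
Your proof is correct and follows essentially the same approach as the paper: both hinge on the observation that simples in $\AA$ coincide with simples in $\Mod\EE$, proved via the finitely-generated-submodule trick. Your (1) $\Rightarrow$ (2) is slightly more elaborate than the paper's one-line subposet argument, but the extra work pays off by directly exhibiting matching composition series, whereas the paper defers the ``moreover'' claim to Jordan--H\"older.
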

\begin{proof}
(1) $\Rightarrow$ (2):
This is clear since the lattice of subobjects of $M$ in $\AA$ is a subposet of that in $\Mod\EE$.

(2) $\Rightarrow$ (1):
It suffices to show that every simple object in $\AA$ is also simple in $\Mod\EE$. Let $M$ be a simple object in $\AA$ and $N$ a non-zero submodule of $M$ in $\Mod\EE$. By using the Yoneda lemma, it is easily checked that there is a non-zero finitely generated submodule $N'$ of $N$ in $\Mod\EE$. Since $N'$ is a finitely generated submodule of $M$, we have that $N'$ belongs to $\AA$ by the assumption. Thus $N'=M$ holds by $N' \leq M$ in $\AA$ and $N'\neq 0$.

The remaining assertions are clear from the above proof and the Jordan-H\"older theorem.
\end{proof}

\begin{proof}[Proof of Proposition \ref{onaji}]
(1)
The category $\eff\EE$ satisfies the conditions in Lemma \ref{elementary} by Propositions \ref{effabelian} and \ref{serre}(1).

(2)
Suppose that $\mod\EE$ is abelian. Then it is well-known that the embedding $\mod\EE \to \Mod\EE$ is exact and that $\mod\EE$ is closed under finitely generated submodules, see \cite{aus66}. Thus Lemma \ref{elementary} applies.
\end{proof}

\medskip\noindent
{\bf Acknowledgement.}
The author would like to express his deep gratitude to his supervisor Osamu Iyama for his support and many helpful comments. This work is supported by JSPS KAKENHI Grant Number JP18J21556.


\begin{thebibliography}{199}
  \bibitem[ASS]{ASS}
 I. Assem, D. Simson, A. Skowro\'nski,
 \emph{Elements of the representation theory of associative algebras Vol. 1}, London Mathematical Society Student Texts, 65,
Cambridge University Press, Cambridge, 2006.

 \bibitem[Au1]{aus66}
 M. Auslander, \emph{Coherent functors},
 Proc. Conf. Categorical Algebra (La Jolla, Calif., 1965) 189--231 Springer, New York.

 \bibitem[Au2]{artin2}
 M. Auslander, \emph{Representation theory of Artin algebras II}, Comm. Algebra 1 (1974), 269--310.

 \bibitem[Au3]{relations}
 M. Auslander, \emph{Relations for Grothendieck groups of Artin algebras}, Proc. Amer. Math. Soc. 91 (1984), no. 3, 336--340.

 \bibitem[Au4]{isolated}
 M. Auslander, \emph{Isolated singularities and existence of almost split sequences}, Representation theory, II (Ottawa, Ont., 1984), 194--242, Lecture Notes in Math., 1178, Springer, Berlin, 1986.

 \bibitem[AB]{ab}
 M. Auslander, R-O. Buchweitz, \emph{The homological theory of maximal Cohen-Macaulay approximations}, M\'em. Soc. Math. France (N.S.) No. 38 (1989), 5--37.

 \bibitem[AR1]{ar1}
 M. Auslander, I. Reiten, \emph{Grothendieck groups of algebras and orders},
 J. Pure Appl. Algebra 39 (1986), no. 1--2, 1--51.

 \bibitem[AR2]{applications}
 M. Auslander, I. Reiten, \emph{Applications of contravariantly finite subcategories}, Adv. Math. 86 (1991), no. 1, 111--152.

 \bibitem[But]{but}
 M. C. R. Butler, \emph{Grothendieck groups and almost split sequences}, Lecture Notes in Math., 882, Springer, Berlin-New York, 1981.

 \bibitem[B\"u]{buhler}
 T. B\"uhler, \emph{Exact categories}, Expo. Math. 28 (2010), no. 1, 1--69.

 \bibitem[En]{en}
 H. Enomoto, \emph{Classifying exact categories via Wakamatsu tilting}, J. Algebra 485 (2017), 1--44.

 \bibitem[En2]{en2}
 H. Enomoto, \emph{Classifications of exact structures and Cohen-Macaulay-finite algebras}, arXiv:1705.02163.

 \bibitem[Fi]{fiorot}
 L. Fiorot, \emph{N-Quasi-Abelian Categories vs N-Tilting Torsion Pairs}, arXiv:1602.08253.

 \bibitem[Gr]{gro}
 A. Grothendieck, \emph{Sur quelques points d'alg\`ebre homologique}, T\^ohoku Math. J. (2) 9 1957 119--221.

 \bibitem[Hi]{hir}
 N. Hiramatsu, \emph{Relations for Grothendieck groups of Gorenstein rings}, Proc. Amer. Math. Soc. 145 (2017), no. 2, 559--562.

 \bibitem[Kr]{krause}
 H. Krause, \emph{Krull-Schmidt categories and projective covers}, Expo. Math. 33 (2015), no. 4, 535--549.

 \bibitem[Ko]{ko}
 T. Kobayashi, \emph{Syzygies of Cohen-Macaulay modules and Grothendieck groups}, J. Algebra 490 (2017), 372--379.

 \bibitem[MMP]{mmp}
 E. N. Marcos, H. A. Merklen, M. I. Platzeck, \emph{The Grothendieck group of the category of modules of finite projective dimension over certain weakly triangular algebras}, Comm. Algebra 28 (2000), no. 3, 1387--1404.

 \bibitem[PR]{pr}
 M. I. Platzeck, I. Reiten, \emph{Modules of finite projective dimension for standardly stratified algebras}, Comm. Algebra 29 (2001), no. 3, 973--986.

 \bibitem[Sm]{smalo}
 S. O. Smal\o, \emph{Torsion theories and tilting modules}, Bull. London Math. Soc. 16 (1984), no. 5, 518--522.

 \bibitem[We]{weibel}
 C. Weibel, \emph{An introduction to homological algebra}, Cambridge Studies in Advanced
Mathematics, 38, Cambridge University Press, Cambridge, 1994.

\end{thebibliography}
\end{document}